
\newcommand{\Mmb}{{\mathbb{M}}}
\newcommand{\NN}{{\mathbb{N}}}

\newcommand{\RR}{{\mathbb{R}}}
\newcommand{\EE}{{\mathbb{E}}}
\newcommand{\PP}{{\mathbb{P}}}


\newcommand{\clt}{{\mathcal{T}}}
\newcommand{\clp}{{\mathcal{P}}}

\newcommand{\cld}{{\mathcal{D}}}
\newcommand{\cle}{{\mathcal{E}}}
\newcommand{\clf}{{\mathcal{F}}}

\newcommand{\cls}{{\mathcal{S}}}


\newcommand{\Qbf}{{\mathbf{Q}}}

\newcommand{\Ubf}{{\mathbf{U}}}
\newcommand{\Vbf}{{\mathbf{V}}}

\newcommand{\Xbf}{{\mathbf{X}}}

\newcommand{\Zbf}{{\mathbf{Z}}}



\newcommand{\Om}{\Omega}
\newcommand{\om}{\omega}





\newcommand{\bfZ}{\mathbf{Z}}
\newcommand{\bfQ}{\mathbf{Q}}
\newcommand{\bfY}{\mathbf{Y}}
\newcommand{\bfM}{\mathbf{M}}
\newcommand{\bfU}{\mathbf{U}}
\newcommand{\bfB}{\mathbf{B}}

\newcommand{\bfv}{\mathbf{v}}
\newcommand{\bfz}{\mathbf{z}}
\newcommand{\bfx}{\mathbf{x}}
\newcommand{\bfy}{\mathbf{y}}
\newcommand{\bfL}{\mathbf{L}}

\newcommand{\bfa}{\mathbf{a}}
\newcommand{\id}{\mbox{\small{id}}}
\newcommand{\clr}{\mathscr{R}}

\newcommand{\diag}{\mbox{diag}}
\newcommand{\upp}{\upsilon}


\documentclass[10pt]{article}
\usepackage[portrait,margin=2.54cm]{geometry}

\usepackage{amsfonts}

\usepackage{amssymb}
\usepackage{mathrsfs}
\usepackage{soul}
\usepackage{hyperref}
\usepackage{amssymb,amsthm,amsmath,amsfonts,amsbsy,latexsym}
\usepackage{graphicx}
\usepackage{upref,setspace}
\usepackage{enumerate}
\usepackage{paralist}
\usepackage{color}

%

\usepackage{mathtools}

\definecolor{expcol}{rgb}{1.0,0.5,0.5}
\definecolor{ecol}{rgb}{0.0, 0.5, 1.0}

\setcounter{MaxMatrixCols}{10}

\topmargin      0.0truein
 \oddsidemargin  0.0truein
 \evensidemargin 0.0truein
 \textheight     8.5truein
 \textwidth      6.2truein
 \headheight     0.0truein
 \headsep        0.3truein
\numberwithin{equation}{section}
\numberwithin{figure}{section}
\numberwithin{table}{section}
\sloppy
\newtheorem{lemma}{Lemma}[section]
\newtheorem{proposition}[lemma]{Proposition}

\newtheorem{theorem}[lemma]{Theorem}

\newtheorem{corollary}[lemma]{Corollary}

\theoremstyle{remark}
\newtheorem{remark}[lemma]{Remark}

\setcounter{secnumdepth}{3}
\setcounter{tocdepth}{2}
\numberwithin{equation}{section}

\mathtoolsset{showonlyrefs}

\newcommand{\cb}{\color{black}}

\begin{document}

\title{Load Balancing in Parallel Queues and Rank-based Diffusions}
\author{Sayan Banerjee, Amarjit Budhiraja, Benjamin Estevez}
\maketitle
\begin{abstract}
Consider a queuing system with $K$ parallel queues in which the server for each queue processes jobs at rate $n$ and the total arrival rate to the system is $nK-\upp \sqrt{n}$ where $\upp \in (0, \infty)$ and $n$ is large.
Interarrival and service times are taken to be independent and exponentially distributed. It is well known that the join-the-shortest-queue (JSQ) policy has many desirable load balancing properties. In particular, in comparison with uniformly at random routing, the time asymptotic {\cb total queue-length} of a JSQ system, in the heavy traffic limit, is reduced by a factor of $K$. However this decrease in {\cb total queue-length} comes at the price of a high communication cost of order  $nK^2$ since at each arrival instant, the state of the full $K$ dimensional system needs to be queried. In view of this it is of interest to study alternative routing policies that have lower communication costs and yet have similar load balancing properties as JSQ. 

In this work we study a family of such rank-based routing policies, {\cb which we will call \textit{Marginal Size Bias Load Balancing} (MSBLB) policies,} in which $O(\sqrt{n})$ of the incoming jobs
are routed to servers with probabilities depending on their ranked queue length and the remaining jobs are routed uniformly at random. A particular case of
such routing schemes, referred to as the marginal join-the-shortest-queue (MJSQ) policy, is one in which all the $O(\sqrt{n})$  jobs are routed using the JSQ policy. Our first result provides a heavy traffic approximation theorem for such  queuing systems in terms of  reflected diffusions in the positive orthant $\RR_+^K$. It turns out that, unlike the JSQ system where due to a state space collapse the heavy traffic limit is characterized by a one dimensional reflected Brownian motion, in the setting of MJSQ (and for the more general rank-based routing schemes) there is no state space collapse and one obtains a novel diffusion limit which is the constrained analogue of the well studied Atlas model (and other rank-based diffusions) that arise from certain problems in mathematical finance.
Next, we prove an interchange of limits ($t\to \infty$ and $n\to \infty$) result which shows that, under conditions, the steady state of the queuing system is well approximated by that of the limiting diffusion. It turns out that the latter steady state can be given explicitly in terms of product laws of Exponential random variables. Using these explicit formulae, and the interchange of limits result, we compute the time asymptotic {\cb total queue-length} in the heavy traffic limit for the MJSQ system. We find the striking result that although in going from JSQ to MJSQ the communication cost is reduced by a factor of $\sqrt{n}$, the steady state heavy traffic {\cb total queue-length} increases by at most a constant factor (independent of $n,K$) which can be made arbitrarily close to one by increasing a MJSQ parameter. We also study the case where the system is overloaded, namely $\upp<0$. For this case we show that although the $K$-dimensional MJSQ system is unstable, however, unlike the setting of random routing, the system has certain desirable and quantifiable load balancing properties. In particular, by establishing a suitable interchange of limits result, we show that   the steady state difference between the maximum and the minimum queue lengths stays bounded in probability (in the heavy traffic parameter $n$).


\noindent\newline

\noindent \textbf{AMS 2010 subject classifications:} 60K25, 60J60, 60K35, 60H10.\\

\noindent \textbf{Keywords:} Load balancing, join-the-shortest-queue, rank-based diffusion, Atlas model, Skorokhod map, product-form stationary distributions, Lyapunov function.
\end{abstract}

\section{Introduction}\label{intro}


Consider a system of $K$ parallel queues that are critically loaded. Specifically, denoting by $n \in \NN$ the heavy traffic parameter, the jobs arrive to the $i$-th queue according to a Poisson process with rate $n - c_i\sqrt{n}$ and these jobs are processed with Exponential service times with rate $n$, where $c_i \in [0,\infty)$ for $1 \le i \le K$. We make the usual assumption of mutual independence of all interarrival times and service times for the various queues in the system. Denoting by $Q^n_i(t)$ the number of jobs in the $i$-th queue (the state of the $i$-th queue) at time $t$, it follows from standard results that the $K$-dimensional process $\hat Q^n(t) \doteq (\hat Q^n_i(t)/\sqrt{n})_{i=1}^K$ converges in distribution (in the Skorokhod path space) to a $K$-dimensional normally reflected Brownian motion with drift vector $-\mathbf{c} = (-c_i)_{i=1}^K$, in the positive orthant $\RR_+^K$ \cite{ReimanOQN}. 

In recent years, with applications arising from large scale service centers, cloud computing platforms and data storage and retrieval systems \cite{LBcloud,altman2011load,gupta2007analysis,ongaro2011fast}, there has been a lot of interest in devising and studying properties of various types of load balancing schemes for parallel server networks. One of the basic forms of load balancing algorithms is the so-called join-the-shortest-queue (JSQ) policy in which jobs arrive to a central dispatcher and every incoming job is routed to the shortest queue in the system at that instant. JSQ has many desirable performance features; in particular, under many common assumptions on the distributions of the service times, such as when they are iid Exponential, the JSQ policy is `optimal' in the sense that it minimizes the expected time for a job to begin service once it has entered the queuing system 
 \cite{WinstonJSQ,WeberOpt}. Consequently, this policy has been extensively studied in many different directions \cite{graham2000chaoticity, graham2005functional, gupta2007analysis,bramson2011jsq, chen2012asymptotic, eschenfeldt2018join, BanMuk, brav, banerjee2020join,budhiraja2020jsq,mukherjee2022}.
 However, one challenging aspect of a JSQ policy is the high communication cost that is incurred in its implementation. Specifically, at each instant of arrival, the system manager needs to query the current state of all queues in the system, and since arrivals occur at rate $\approx nK$ (where $n$ is a large parameter) this operation can be expensive, particularly when $K$ is large as well. On the other hand, the parallel queuing system (PQS) of the form described in the first paragraph, which corresponds to routing incoming jobs at random (with routing probability to $i$-th queue proportional to $(1-c_in^{-1/2})$), is very easy to implement but does not have the desirable load balancing features of JSQ. For example, consider $K$ queues operating in parallel under the PQS policy with jobs arriving to the central dispatcher at rate $Kn - \upp\sqrt{n}$ {\cb and routed uniformly at random} (equivalently $c_i = v/K$ for $1\le i \le K$) where $\upp \in (0,\infty)$. The expected average steady state {\cb total queue-length} of the system for large $n$ is approximately $K\sqrt{n}\upp^{-1}$. In comparison, for the same central dispatcher system, with  the JSQ discipline, the expected average steady state {\cb total queue-length} is approximately $\sqrt{n}\upp^{-1}$ (see \cite[Table 1]{chen2012asymptotic}). Hence, the average {\cb total queue-length} (and by Little's law, the average time spent by a customer in the system) is reduced by a factor of $K$ in passing from the PQS policy to the JSQ policy.

The goal of the current work is to explore  load balancing schemes that in a certain fashion combine some of the desirable features of both types of policies discussed above. Specifically, let $a_i \in \mathbb{R}$, $1\le i \le K$ and let $\upp \doteq \sum_{i=1}^K a_i$ and $a_* \doteq \max_{1 \le i \le K}a_i$ {\cb and, for $1 \le i \le K$, write 
\begin{equation}\label{eq:rev1}
p_i =\frac{1 - a_i n^{-1/2}}{K - \upp n^{-1/2}} = \frac{nK - a_*K\sqrt{n}}{nK - v\sqrt{n}} \times \frac{1}{K} + \frac{(a_*K - v)\sqrt{n}}{nK - v\sqrt{n}}\times\frac{a_* - a_i}{a_*K-v}.
\end{equation}} 
For any $n \in \mathbb{N}$ with $n >  \max_{l \in \{1, \ldots, K\}} (a_l \vee 0)^2$, 
we consider a \emph{rank-based routing scheme} in which jobs arrive at a central dispatcher at rate $Kn - \upp\sqrt{n}$. Each job upon arrival is{\cb, with probability $p_j$,} routed to the $j$-th shortest queue in the system (with ties broken in the lexicographical order). {\cb We will call this family of routing schemes \textit{``Marginal Size Bias Load Balancing"} (MSBLB) policies. Intuitively, as clarified by the second equality in \eqref{eq:rev1},  an incoming job, upon arrival, is either (with probability $\frac{nK-a_*K\sqrt{n}}{nK-v\sqrt{n}}$) routed to a server uniformly at random or (with probability $\frac{(a_*K-v)\sqrt{n}}{nK-v\sqrt{n}}$) routed via a size-biased scheme, under which  the quantity $\frac{a_*-a_j}{Ka-v}$ can be thought of as the `preference' given to the $j$-th shortest queue. In particular, for each incoming job, the probability of random routing is $1 - O(n^{-1/2})$ and of routing via a size-biased scheme is $O(n^{-1/2})$.

} Of particular interest is the special case where $a_i =a$ for $2\le i \le K$ and $a_1 = a-b$, with $a,b>0$. In this case the system can be viewed as one in which arrivals occur to each queue (independently of others) at rate $n - a\sqrt{n}$, with the modification that the shortest queue gets an additional arrival stream of rate $b \sqrt{n}$. An advantageous  feature of this routing scheme, which we refer to as the \emph{marginal join-the-shortest-queue (MJSQ)} policy, is that the communication cost for this policy is of 
 the order $\sqrt{n} Kb$ rather than of the order $nK^2$ for the JSQ policy. One expects that there is a performance loss in terms of load balancing when using the MJSQ instead of the JSQ policy, which in some way offsets the  advantage in terms of the communication cost involved in implementing the policy. Our objective in this work is to systematically study rank-based load balancing schemes of the above form, in the heavy traffic regime, with an eye towards examining such tradeoffs in a precise and rigorous manner.
 
 We are in particular interested in establishing limit theorems for such systems that characterize the heavy traffic asymptotics of diffusion scaled queue-length processes in terms of suitable constrained rank-based diffusion processes
 and in studying the long time behavior of the queuing systems and of their diffusion limits. Towards the latter goal, we establish, under conditions, an interchange of limits ($t \to \infty$, $n\to \infty$) result that allows the approximation of the steady state behavior of the diffusion scaled queuing system by that of its diffusion limit. Furthermore, we show that the stationary distribution of the limiting diffusion can be given explicitly in terms of certain product laws of Exponential random variables. These exact formulae allow us to compute heavy traffic approximations of key steady state performance metrics for various rank-based load balancing schemes of the form discussed above. In particular, we obtain the following striking results for the MJSQ routing policy (see Section \ref{mjsqrem} for details).
  %
 %
 %
 %
 %
 %
 When the net arrival rate is $nK-\upp\sqrt{n}$ and each server has service rate $n$, the expected, per-server, steady state queue-length, scaled by $\sqrt{n}$, for large $n$, is approximately $K/\upp$ for PQS, $1/\upp$ for JSQ and $(K-1)/(Ka) + 1/\upp$ for MJSQ (with $a >0$ and $b=aK-\upp$ for some $\upp \in (0, aK)$). Thus,  the MJSQ policy significantly improves the performance over PQS and brings the average queue-length down to a constant factor $\approx (1 + \upp/a)$ of JSQ. 
Moreover, the system imbalance index, defined as the expected  (steady state) difference between the maximum and minimum queue lengths, is reduced by a factor $Ka/\upp$ in going from the PQS to the MJSQ policy.  Thus, when $a\to \infty$, the system approaches a form of state space collapse in the steady state. For the heavy traffic limit of the JSQ system, an exact process level state space collapse is known from the work of \cite{chen2012asymptotic}. {\cb The phenomenon of state space collapse in queueing systems in heavy traffic is studied more generally in \cite{SubdifLB}.} Thus our results show that, for fixed $\upp$, the MJSQ policy becomes `asymptotically optimal' as $a \rightarrow \infty$ in the sense that the steady state average queue-lengths and the system imbalance index approach the analogous quantities for JSQ.
 Further, as noted in Remark \ref{unload},  even for the range of parameters where both the PQS and MJSQ are unstable (i.e. $\upp <0$), the MJSQ policy has certain desirable and quantifiable load balancing properties.
 Specifically, by establishing a suitable interchange of limits result, our results  show that the steady state difference between the maximum and the minimum queue lengths stays bounded in probability (in the heavy traffic parameter $n$) and in fact it vanishes to $0$ as the system overload parameter becomes large (i.e. $\upp \to -\infty$).

 We now describe our main results in more detail. In Theorem \ref{convtodiff} we provide our heavy traffic limit theorem which gives an approximation for the diffusion scaled queue length process, on compact time intervals, in a suitable path space, in terms of a certain reflected diffusion process in the nonnegative orthant $\RR_+^K$. We find that, in the study of this asymptotic behavior, it is more convenient to consider the evolution equation for the ranked queues rather than for the original labeled queues. Theorem \ref{convtodiff} provides the asymptotic behavior of the shortest queue, together with the gaps between all the successively ranked queues, from which the asymptotic behavior of the ranked queuing system follows immediately. As discussed in Remark  \ref{rem:atl}, the limiting diffusion can be viewed as the constrained version of certain rank-based diffusions (e.g. the Atlas model) that in recent years have been studied extensively, motivated by certain problems in mathematical finance. This connection is discussed further in the next paragraph. We also observe that the situation here is quite different from the setting of a JSQ system where one finds \cite{chen2012asymptotic} that, due to a certain state space collapse property, the limiting diffusion can be characterized in terms of a one dimensional reflected Brownian motion. In contrast, here there is no state space collapse and a novel reflected diffusion emerges as the heavy traffic limit of the queuing system. We are particularly interested in the long time behavior. In Theorem \ref{statdistthm}, using results of \cite{harwil},  we identify a simple necessary and sufficient condition for the positive recurrence of the limiting diffusion. Furthermore, we show using \cite{harwil2}  that when this condition is satisfied the unique stationary distribution of the reflected diffusion has an explicit product form Exponential law. Next, in Theorem \ref{statdistconv} we show that, under the same stability condition, for large 
 $n$, there is a unique stationary distribution for the shortest queue together with the gap sequence associated with the ranked queue-length processes, and, under the diffusion scaling, these stationary distributions converge to the unique stationary distribution for the limiting diffusion. Finally, in Theorem \ref{unstabstatdist} we investigate a setting where the system is overloaded and consequently both the MJSQ queuing processes and the PQS processes are unstable. The theorem shows that even in this unstable setting MJSQ policy has certain desirable load balancing properties. Specifically, the queue-length and gap processes for the PQS are transient while for the MJSQ system, although the queue-length processes are transient, the gap processes are stable in a suitable sense. We note that the gap processes by themselves (not including the shortest queue process) are not Markovian, nevertheless our results allow us to study its steady state behavior by
 showing that, for sufficiently large $n$,  the laws of the gaps at time instant $t$ converge as $t \to \infty$. Furthermore, the limiting laws converge, as $n \to \infty$, to an Exponential product form law. This latter distribution can in fact be identified as exactly the stationary distribution of the gap process for a $K$ particle standard (unconstrained) Atlas model (see also next paragraph). Using this explicit expression for the stationary distribution we then identify a useful load balancing property of the MJSQ policy in this unstable regime that says that,  unlike the PQS for which the difference between the maximum and minimum queue lengths is not tight over time, for the MJSQ system this difference converges in distribution to a finite random variable as $t \to \infty$ and $n \to \infty$. The expectation of this random variable, which is approximately of order $Kb^{-1} \log K$ for large $K$, gives a precise quantitative measure of load balancing achieved by MJSQ over PQS (for which this quantity is $\infty$).


As alluded to in the above discussion, through its heavy traffic limiting behavior, the rank-based routing policies discussed above are connected to another area of much recent interest  - namely,  the study of \textit{rank-based diffusions} \cite{Atlas2,Atlas1,Atlas3,DJO,sarantsev2017stationary,AS,banerjee2021domains}. These diffusions are models that involve a collection of (Brownian) particles on the real line whose drift and diffusivity vary over time according to the relative ranks of their positions. These models have a variety of intriguing features, one of which is that they frequently exhibit product-form stationary distributions. A basic example of such a rank-based diffusion is the well-known \textit{Atlas model} \cite{Atlas1,Atlas2,Atlas3,IchKar,IchHybrid} in $K$ particles, introduced by Fernholz in the context of stochastic portfolio theory \cite{Fernholz}, in which the particle of lowest rank at any time evolves as a Brownian motion with strictly positive drift, while the other $K-1$ particles evolve as standard Brownian motions. As is noted in Remark \ref{rem:atl}, the diffusion processes arising in the heavy traffic limit of our rank-based routing policies can be viewed as
 the gaps between ordered particles in certain constrained rank-based diffusion models. 
Specifically, in the setting of the MJSQ policy,
 the diffusion which arises in the heavy traffic limit for the gaps between ranked queues  is a variation of the Atlas model in which the lowest particle is reflected at zero. 
 The connection with the Atlas model becomes even more direct in the setting where  the MJSQ policy is unstable. In this case the lowest particle escapes the origin in finite time, leading to a limiting stationary dynamics for the upper $K-1$ ranked gaps which is identical to that of the Atlas model with $K$ particles. However there is an important distinction between the two models in that for the Atlas model the gap processes describe a Markov process and although the lowest particle state process is not bounded in probability, the gap processes are positive recurrent. In contrast, for the limiting diffusion arising from the (stable) MJSQ system (and from other stable rank-based policies), due to the reflection at the boundary, the gap process by itself is not Markovian, however the lowest particle state process together with the gap processes describe a positive recurrent Markov process.

A class of load balancing policies that have attracted much attention in recent years \cite{mitzenmacher2001power, mitz2, vdk, blp,msy,
mukherjee2018universality,budhiraja2019diffusion,luczak2006maximum} are the so called 
\emph{Power-of-choice (PoC)} schemes. In such a scheme, upon each arrival $d \in \{2,\dots,K\}$ servers are chosen uniformly at random and the job is assigned to the shortest of the $d$ queues. The attractiveness of these policies is in their low communication cost which is of order $ndK$ as opposed to the order $nK^2$ cost associated with the JSQ policy. When $K$ is large and $d$ is small (e.g. $d=2$) the advantage can be significant. From \cite{chen2012asymptotic}  it is known that the heavy traffic limit for a PoC scheme for a system of the form considered in this work is the same for any $d= 2, 3, \ldots, K$ and thus, in particular, is characterized in terms of a one dimensional reflected Brownian motion. Although the PoC achieves the same  asymptotically optimal behavior as JSQ (in the setting of a fixed $K$ number of queues) while incurring a lower communication cost, MJSQ still has significant advantage over the PoC in terms of these costs ($\sqrt{n}bK$ versus $ndK$) for large $n$ (and fixed $K$). However when $K$ is large (growing with $n$) this comparison becomes less clear and it would be interesting to explore the performance of the `marginal' version of the PoC scheme in which only $O(\sqrt{n})$ many jobs are routed using a PoC scheme and the rest are routed uniformly at random. Even for fixed $K$, unlike the setting of \cite{chen2012asymptotic} where the diffusion limit is the same for a JSQ scheme and a PoC scheme, we expect the diffusion limit for the marginal PoC scheme under the heavy traffic scaling to be different from the diffusion obtained in Theorem \ref{convtodiff} for the MJSQ scheme. When both $n$ and $K$ are suitably large, the differences between the JSQ and PoC are particularly striking \cite{mitzenmacher2001power,mukherjee2018universality,VDBsurv} and it would be of interest to explore the performance of MJSQ and marginal version of PoC schemes in this asymptotic regime as well. 

{\cb Another 
well studied load balancing scheme is the so called 
 ``Join the Idle Queue" (JIQ) policy, described in \cite{JoinIdle}. Under this policy, rather than the dispatcher querying the full state of the system to determine which server to route incoming jobs to, any server whose queue empties instead notifies the dispatcher that it has fallen idle, and the next incoming job is routed to this server, or, if no server has been flagged as idle, the incoming job is routed uniformly-at-random. In \cite{JoinIdle}, numerical evidence is given that JIQ outperforms a Power-of-2 scheme with respect to load-balancing efficiency. 
 Other examples of load balancing schemes are the ``Persistent Idle" policy, introduced in \cite{PersIdle} and
 ``Load balancing with memory" algorithm introduced in \cite{MemLB}. For the first scheme, 
 incoming jobs are routed to the server which last reported idle to the dispatcher until another server (different from the previous server) reports as idle to the dispatcher. This policy has been shown numerically to have comparable load-balancing performance to JSQ under certain conditions in \cite{PersIdle}. 
 The second scheme works as follows. At initialization, the server has a record of the lengths of the queues at each server. When a job arrives, the dispatcher queries the queue-lengths of a random sample of (fixed) size $d < K$ of the servers, updates its record of queue-lengths for the sampled servers only, and then dispatches the incoming job to the server which has the shortest queue according to its record. It would be of interest to study performance of `marginal' versions of these policies
 in which only $O(\sqrt{n})$ of the incoming jobs per unit time use such a scheme while the remaining are routed uniformly at random.} 
 
Finally, in this work we have assumed the network primitives to be Exponentially distributed which simplifies many arguments. It would be interesting to explore the setting where the interarrival and/or service times are not Exponentially distributed.
These questions are left for future study.

\subsection{Setting and Notation}
\label{sec:notat}
For $j \in \mathbb{N}$, we will write $[j] \doteq \{1,...,j\}$. For a $d\times d$ matrix $A$, $\diag(A)$ will denote the $d\times d$ diagonal matrix with diagonal entries given by the diagonal entries of $A$. We denote by $I$ the identity matrix whose dimension will be clear from the context.
All inequalities involving vector quantities are to be interpreted componentwise. 
For a $K$-dimensional vector $\bfx= (x_1, \ldots x_K)'$ in $\RR^K$ and $j \in [K]$ we  define
$r_j(\bfx) = i$ if the $i$-th coordinate of $\bfx$ has rank $j$, with ties broken in the lexicographical order.
We will sometimes write $\id: [0,\infty) \rightarrow \mathbb{R}$ for the function $\id(t) = t$, $t \geq 0$. 
For a metric space $\mathbb{X}$, we denote by $\mathcal{D}([0,\infty):\mathbb{X})$ the space of functions $f: [0,\infty) \rightarrow \mathbb{X}$ which are right-continuous and have finite left-limits (RCLL) endowed with the usual Skorokhod topology and by $\mathcal{D}_0([0,\infty):\mathbb{R}^K)$ the set of all $f \in \mathcal{D}([0,\infty):\mathbb{R}^K)$ such that $f(0) \geq 0$. 
The space of continuous functions $f: [0,\infty) \rightarrow \mathbb{X}$ equipped with the topology of local uniform convergence will be denoted as $\mathcal{C}([0,\infty):\mathbb{X})$.
For a Polish space $\cls$, $\clp(\cls)$ will denote the space of probability measures on $\cls$ equipped with the topology of weak convergence.
We denote convergence in distribution of random variables with values in some metric space and defined on some probability space by $\Rightarrow$. With an abuse of notation, we also write $\nu_n \Rightarrow \nu$ for weak convergence of a sequence of probability measures $\{\nu_n\}$ to some probability measure $\nu$. 
For $j \in \mathbb{N}$, $f \in \mathcal{D}([0,\infty):\mathbb{R}^j)$ and $T \geq 0$, we will write the supremum norm over the interval $[0,T]$ as $||f||_{j,T} \doteq \sup_{s \in [0,T]}|f(s)|_j$, where $|\cdot|_j$ is the standard Euclidean norm in $\mathbb{R}^j$. The subscript $j$ will be omitted where it may be inferred from context.

Throughout this work, multiple distinct but distributionally equivalent  representations of the same queuing processes will be used for the sake of making efficient arguments. In order to preserve clarity, the symbols $A, A_1, A_2, ...$ and $D, D_1, D_2, ...$ will be reused throughout to refer to  Poisson processes defined on some probability space $(\Omega,\mathscr{F},\mathbb{P})$. These symbols should be understood to refer to the same processes within any individual proof, but their definitions may differ between proofs as needed.

\section{Model description}\label{moddesc}

Let $n, K \in \mathbb{N}$, and $a_i \in \mathbb{R}$ for $i = 1, ..., K$. {\cb Throughout we assume that $n$ is large enough so that $n - \max_{i \in [K]}a_i\sqrt{n} > 0$, namely we assume that $n >  n_* \doteq \max_{l \in [K]} (a_l \vee 0)^2$.} Consider $K$ queues labeled $1, ..., K$. For $i \in [K]$, a stream of jobs arrives at the $i^{\textnormal{th}}$ shortest of these queues in accordance with a Poisson process with rate $n - a_i\sqrt{n}$. 
The jobs are served at the queue at which they arrive according to the first-come-first-serve (FCFS) policy, and service times are given by  Exp($n$) random variables. We assume mutual independence of all service times, interarrival times, and routing indicator random variables that govern the streaming of jobs to the various queues in the system. Specifically consider a central {\cb dispatcher} to which jobs arrive according to a Poisson process $\Lambda^n$ with rate $nK - \upp\sqrt{n}$, where as before, $\upp = \sum_{i=1}^K a_i$. 
{\cb We consider a routing policy under which each job upon arrival is routed to the $j$-th shortest queue in the system
with probability $p_j$, defined as in \eqref{eq:rev1}
 (with ties broken in the lexicographical order). Mathematically this can be described as follows. }
Let  $\{U_m\}_{m\in \NN}$ be an iid sequence of random variables with values in $\{1, \ldots , K\}$ where
$$P(U_m=j) = \frac{1 - a_j n^{-1/2}}{K - \upp n^{-1/2}}, \; j \in [K], \; m \in \NN.$$
Since $n >  n_*$, the above is indeed a well-defined probability measure.
Let $\{v_m\}_{m \in \NN}$ be an iid sequence of Exp($n)$ random variables. We assume that the Poisson process $\Lambda^n$, the sequence $\{U_m\}_{m \in \NN}$ and the sequence
$\{v_m\}_{m\in \NN}$ are mutually independent. With this notation, the above queuing system corresponds to the setting where the $m$-th arriving job according to the Poisson process $\Lambda^n$ is routed to the $U_m$-th shortest queue at the instant of arrival (with ties broken according to the lexicographical order) and its processing time (from the time the service starts) is $v_m$. {\cb Thus, as seen from \eqref{eq:rev1}, each job upon arrival is, with probability $\frac{nK-a_*K\sqrt{n}}{nK-v\sqrt{n}}$, routed uniformly at random to one of the $K$-queues and, for $1\le j\le K$, routed to the $j$-th shortest queue in the system with probability 
$\frac{\sqrt{n}(a_* - a_i)}{nK - v\sqrt{n}}$ (recall $a_* \doteq \max_{1 \le i \le K}a_i$).}

Denote by $Q^n_i(t)$ the length of the $i^{\textnormal{th}}$ queue at time $t \geq 0$ and by $X^n_i(t)$ the length of the $i^{\textnormal{th}}$ shortest queue at time $t$ in the $n$-th system (in particular, $X^n_1$ gives the length of the shortest queue). We will suppress $n$ from the notation unless needed. When multiple queues are tied in length, we uniquely identify the unordered queues with the ordered queues in accordance with the lexicographic ordering of their indices. That is, if at a time $t \geq 0$ and for $\ell \in [K]$, $i_1 < i_2 < ... < i_\ell \in [K]$ are the indices of the unordered queues tied for $j$-th shortest queue, then $Q_{i_1}(t) = X_j(t), Q_{i_2}(t) = X_{j+1}(t), ..., Q_{i_{\ell}}(t) = X_{j + \ell - 1}(t)$. Note that in the above, when we refer to `the $i^{\textnormal{th}}$ shortest queue', we mean in particular the queue whose rank is $i$ under the aforementioned tie-breaking scheme.

Write $\Xbf(\cdot) \doteq (X_1(\cdot),...,X_K(\cdot))'$.
 We will now present a convenient state evolution equation for this system in terms of a collection of independent Poisson processes. Such time-changed Poisson process representations have been used extensively in the literature and originate from the work of Kurtz \cite{Kurtz80}. Before we present the state equation we give a heuristic description of the representation.

We work on a probability space $(\Omega, \mathscr{F}, \mathbb{P})$ on which are defined $2K$ mutually independent rate $1$ Poisson processes $A_1(\cdot),...,A_K(\cdot)$ and $D_1(\cdot),...,D_K(\cdot)$. For $j \in [K]$, the process $A_j$ will be used to define the arrival process for the $j$-th ranked queue, and $D_j$ will be used for defining the departure process for the same queue. Note that the actual identity (i.e. the label) of the $j$-th ranked queue changes dynamically over time.

We consider the $i$-th ranked queue, for $i \in [K-1]$. Suppose that at a time $t \geq 0$, $X_i(t-) = X_{i+1}(t-)$. Then at time $t$, $X_i$ cannot increase because otherwise it would pass above $X_{i+1}$ and therefore become the $i+1$-th ranked queue. Hence, in this circumstance, the instantaneous rate of arrival at $X_i$ at time $t$ must be zero. Similarly, if we instead suppose that for $i\in [K]$, $X_i(t-) = X_{i-1}(t-)$ (we set $X_0(t) =0$ for all $t\ge 0$), then we have that $X_i$ cannot decrease, because otherwise it would pass below $X_{i-1}$, and therefore the instantaneous rate of departure at $X_i$ at time $t$ must be zero.

Now suppose instead that, for $i \in [K]$ (by convention we set $X_{K+1}(t) =\infty$ for all $t\ge 0$), $X_i(t-) < X_{i+1}(t-)$. Let $J \subset [i-1]$ be the (possibly empty) subset of indices such that $X_j(t-) = X_i(t-)$ when $j \in J$ and $X_j(t-) < X_i(t-)$ when $j \in [i-1]\setminus J$. Since for $j \in J$, the queue $X_j$ is lower-ranked than $X_i$, any arrival which occurs at the (unranked) queue corresponding to $X_j$ causes $X_i$ to increase. Hence, in this case, the instantaneous rate of arrival at $X_i$ is 
\begin{equation*}
    (n - a_i\sqrt{n}) + \sum_{\{j \in [i-1]: X_j(t-) = X_i(t-)\}}(n - a_j\sqrt{n}).
\end{equation*} 
Now suppose instead that, for $i \in [K]$, $X_i(t-) > X_{i-1}(t-)$, and let $J \subset \{i+1,...,K\}$ 
be the subset of indices such that when $j \in J$, $X_i(t-) = X_j(t-)$ and when $j \in \{i+1,...,K\}\setminus J$, $X_i(t-) < X_j(t-)$. Since for $j \in J$, the queue $X_i$ is lower-ranked than $X_j$, any departures at the (unordered) queue corresponding to $X_j$ will lead to a decrease in $X_i$. Hence, in this case, the instantaneous rate of departure at $X_i$ is 

\begin{equation*}
    n + n\#\{j \in \{i+1,...,K\}: X_i(t-) = X_j(t-)\},
\end{equation*}
where for a finite set $S$, $\#S$ denotes its cardinality.

Let, for $n \in \NN$, $\{X_i^n(0)\}_{i \in [K]}$ be a collection of $\NN_0$ valued random variables that are independent of 
$\{A_i, D_i, 1 \le i \le K\}$ such that $X^n_1(0) \le X^n_2(0) \le \cdots \le X^n_K(0)$. These will represent the initial states of the ordered queues.
Then with these considerations, we  can now describe the evolution equations for $X^n_1, X^n_2, ..., X^n_{K}$ in terms of the Poisson processes $\{A_i, D_i, 1 \le i \le K\}$ as follows
\begin{align}
    X^n_i(t) = X^n_i(0) + & A_i\left(\int_0^t (n - a_i\sqrt{n} + \sum_{j = 1}^{i-1}(n - a_j\sqrt{n})\mathbf{1}_{\{X^n_i(s) = X^n_j(s))\}} )\mathbf{1}_{\{X^n_i(s) < X^n_{i+1}(s)\}} )ds\right) \\
    - & D_i\left(\int_0^t (n + n\sum_{j = i+1}^{K}\mathbf{1}_{\{X^n_i(s) = X^n_j(s)\}})\mathbf{1}_{\{X^n_{i}(s) > X^n_{i-1}(s)\}} ) ds\right), \; i \in [K].
\end{align}
Note in particular that for the $K$-th queue, there is no upper queue which restricts the arrival process and so the indicator
$\mathbf{1}_{\{X^n_K(s) < X^n_{K+1}(s)\}}$ in the arrival process term can be dropped and
we have the following expression for departure term
\begin{align}
    D_K \left(\int_0^t (n + n\sum_{j = K+1}^{K}\mathbf{1}_{\{X^n_K(s) = X^n_j(s)\}})\mathbf{1}_{\{X^n_{K}(s) > X^n_{K-1}(s)\}} ) ds\right)= D_K\left(n\int_0^t\mathbf{1}_{\{X^n_K(s) > X^n_{K-1}(s)\}}ds\right).
\end{align}
For $i \in K$, write 

\begin{align*}
    \delta^n_i(t) \doteq \sum_{j=1}^K\mathbf{1}_{\{X^n_j(t) = X^n_i(t)\}}, \;\; 
    \alpha_i^n(t)  \doteq \sum_{j=1}^K(1-{a}_jn^{-1/2})\mathbf{1}_{\{X^n_j(t) = X^n_i(t)\}}.
\end{align*}
{\cb Note that the following identities hold
\begin{multline*}
n - a_i\sqrt{n} + \sum_{j = 1}^{i-1}(n - a_j\sqrt{n})\mathbf{1}_{\{X^n_i(s) = X^n_j(s)\}} = 
\sum_{j = 1}^{i}(n - a_j\sqrt{n})\mathbf{1}_{\{X^n_i(s) = X^n_j(s)\}}\\ 
= \left(\sum_{j = 1}^{K}(n - a_j\sqrt{n})\mathbf{1}_{\{X^n_i(s) = X^n_j(s)\}}\right)\mathbf{1}_{\{X^n_i(s) < X^n_{i+1}(s)\}}
=n\alpha_i^n(s)\mathbf{1}_{\{X^n_i(s) < X^n_{i+1}(s)\}}.
\end{multline*}
Similarly,
\begin{multline*}
n\left(1+ \sum_{j = i+1}^{K}\mathbf{1}_{\{X^n_i(s) = X^n_j(s)\}}\right) = n\sum_{j = i}^{K}\mathbf{1}_{\{X^n_i(s) = X^n_j(s)\}}\\ 
= n\left(\sum_{j = 1}^{K}\mathbf{1}_{\{X^n_i(s) = X^n_j(s)\}}\right)\mathbf{1}_{\{X^n_{i}(s) > X^n_{i-1}(s)\}} = 
n\delta^n_i(s)\mathbf{1}_{\{X^n_{i}(s) > X^n_{i-1}(s)\}}.
\end{multline*}
and therefore, recalling} the convention $X^n_0(\cdot) \doteq 0$ and $X^n_{K+1}(\cdot) \doteq \infty$, we can  rewrite the state evolution as
\begin{align}
    X^n_i(t) =  X^n_i(0) + A_i(n\int_0^t \alpha^n_i(s)\mathbf{1}_{\{X^n_i(s) < X^n_{i+1}(s)\}}ds) - D_i(n\int_0^t \delta^n_i(s)\mathbf{1}_{\{X^n_{i}(s) > X^n_{i-1}(s)\}}ds),\,\,\,\,i \in [K].\nonumber\\ \label{ordqs}
\end{align}
Note that the existence and uniqueness of strong solutions to \eqref{ordqs}  follows by a standard jump-by-jump iterative construction.

\section{Main Results}\label{results}


Define the $K \times K$ matrix $\clr$ as
\begin{align*}
	\clr_{ii} &= 1, \, i \in [K], \clr_{i, i+1} = -1/2, \, i \in [K-1], \; \clr_{i, i-1} = -1/2, 3 \le i \le K, \clr_{2, 1} = -1, \; \clr_{i, j} = 0 \mbox{ for other } i,j.
\end{align*}
Namely,
\begin{equation}\label{XRmat}
\mathscr{R} \doteq
\begin{pmatrix}
 1 & -\frac{1}{2} & 0 & 0 & \cdots & 0 \\
 -1 & 1 & -\frac{1}{2} & 0 & \cdots & 0 \\
 0 & -\frac{1}{2} & 1 & -\frac{1}{2} & \cdots & 0 \\
 \vdots & \vdots & \vdots & \vdots & \vdots & \vdots \\
 0 & \cdots & \cdots & \cdots & -\frac{1}{2} & 1
\end{pmatrix}.
\end{equation}


In subsequent sections, we will often wish to rewrite an  evolution equation for a stochastic dynamical system in terms of a Skorokhod map with respect to some reflection matrix applied to an input path. 
Denote by $\Mmb$ the collection of all $K\times K$ matrices $M$ that can be written in the form $M=(I-Q')$  where  $Q$ is a nonnegative matrix with zeroes on the diagonal and spectral radius strictly less than $1$.
It is easy to verify that the matrices  $\clr$ and the identity matrix $I$  both belong to the class $\Mmb$. It follows from the results of \cite{harrei} that the Skorokhod problem associated with any matrix $M \in \Mmb$ is wellposed. More precisely, the following result holds. 
%
%
%
\begin{proposition}\label{prop:skor}
	Fix $M \in \Mmb$.
Then  for every $x \in \mathcal{D}_0([0,\infty): \mathbb{R}^K)$, there exists a unique pair $(\eta,y) \in \mathcal{D}([0,\infty): \mathbb{R}_+^K) \times \mathcal{D}([0,\infty): \mathbb{R}_+^K)$ such that,
\begin{itemize}
    \item[(i)] for all $t \geq 0, y(t) = x(t) + M\eta(t)$,
    \item[(ii)] for each $i \in [K]$, (a) $\eta_i(0) = 0$, (b) $t\mapsto \eta_i(t)$ is non-decreasing, (c) $\int_0^{\infty}y_i(t)d\eta_i(t) = 0$.
\end{itemize}

The pair $(\eta, y)$ is called the solution to the Skorokhod problem for $x$ with respect to $M$. The map $\Gamma: \mathcal{D}_0([0,\infty): \mathbb{R}^K) \rightarrow \mathcal{D}([0,\infty): \mathbb{R}_+^K) \times \mathcal{D}([0,\infty): \mathbb{R}_+^K)$ given by 
\begin{equation*}
    \Gamma(x) = (\eta,y) = (\Gamma_1(x),\Gamma_2(x))
\end{equation*}
is Lipschitz-continuous in the sense that there exists $c_{\Gamma} \in (0,\infty)$ such that for $x, x' \in \mathcal{D}_0([0,\infty): \mathbb{R}_+^K)$ and $t < \infty$,
\begin{equation*}
    ||\Gamma_1(x) - \Gamma_1(x')||_t + ||\Gamma_2(x) - \Gamma_2(x')||_t \leq c_\Gamma||x - x'||_t.
\end{equation*}
\end{proposition}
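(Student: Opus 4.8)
The plan is to reduce Proposition~\ref{prop:skor} to the already-cited wellposedness and Lipschitz theory for Skorokhod problems associated with matrices of the Harrison–Reiman type, established in \cite{harrei}. The key observation is that the class $\Mmb$ defined in the paper consists precisely of matrices of the form $M = I - Q'$ where $Q \geq 0$ has zero diagonal and spectral radius $\mathrm{sp}(Q) < 1$; this is exactly the class for which \cite{harrei} proves existence, uniqueness, and Lipschitz continuity of the Skorokhod map on the nonnegative orthant $\RR_+^K$. So the entire content of the proof is to (a) quote the theorem from \cite{harrei} verbatim in the form stated, and (b) verify the hypotheses — in particular that $\mathrm{sp}(Q) < 1$ — for the two matrices of interest, namely $I$ itself (trivial: $Q = 0$) and $\clr$.

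\textbf{Step 1: Identify the structural hypothesis.} I would first recall the Harrison–Reiman setup: given $M = I - Q'$ with $Q$ substochastic-like (nonnegative, zero diagonal, $\mathrm{sp}(Q)<1$), the oblique reflection of any path $x \in \mathcal{D}_0([0,\infty):\RR^K)$ in $\RR_+^K$ along the directions given by the columns of $M$ exists uniquely, the regulator $\eta$ is RCLL, nondecreasing componentwise, starts at $0$, and satisfies the complementarity condition $\int_0^\infty y_i \, d\eta_i = 0$; moreover the solution map is Lipschitz in the uniform-on-compacts norm, with a constant depending only on $M$ (equivalently on $\mathrm{sp}(Q)$ and $K$). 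This is precisely the statement of the proposition, so once the hypotheses are checked nothing further is needed. The key point justifying why $\mathrm{sp}(Q)<1$ matters is that the Picard-type iteration $\eta^{(m+1)} = \Gamma_1^{(I)}(x + Q'\eta^{(m)})$ — where $\Gamma_1^{(I)}$ is the (explicit, one-dimensional-in-each-coordinate) Skorokhod map for the identity matrix — is a contraction in the uniform norm on $[0,T]$ with modulus governed by $\mathrm{sp}(Q)$, whence both wellposedness and the Lipschitz bound follow by a fixed-point argument.

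\textbf{Step 2: Verify the spectral radius bound for $\clr$.} Write $\clr = I - Q'$, so $Q' = I - \clr$; then $Q'$ has zero diagonal, entries $1/2$ on the super- and sub-diagonals except $Q'_{21} = 1$ (equivalently $Q_{12} = 1$), and zeros elsewhere. One checks $Q$ is nonnegative with zero diagonal by inspection. For the spectral radius I would bound $\mathrm{sp}(Q) \leq \mathrm{sp}(\tilde Q)$ where $\tilde Q$ is the symmetric tridiagonal matrix with $1/2$'s off the diagonal (a standard comparison, using that the only asymmetry is the $Q_{12}=1$ vs $Q_{21}=1/2$ pair, and $\mathrm{sp}(Q) = \mathrm{sp}(\sqrt{D}\tilde Q_{\mathrm{sym}}\sqrt{D}^{-1})$ after a diagonal similarity, or more simply: $Q$ is similar to a symmetric matrix via conjugation by $\diag(1,\sqrt{2},\sqrt{2},\ldots)$ — indeed $D Q D^{-1}$ with $D = \diag(1,\sqrt 2,\ldots,\sqrt 2)$ is symmetric with off-diagonal entries $1/\sqrt 2$ in the $(1,2)$/$(2,1)$ slots and $1/2$ elsewhere). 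The resulting symmetric matrix has all row sums strictly less than $1$ except possibly interior rows with row sum exactly $1$; since it is irreducible and not every row sum equals the same constant, Perron–Frobenius (or the strict version of the Gershgorin argument for irreducible matrices) gives $\mathrm{sp} < 1$ strictly. Alternatively, and perhaps cleanest to write, I would simply cite that the paper has already asserted "it is easy to verify that $\clr$ and $I$ both belong to $\Mmb$" — so in the proof I only need to supply this one-line Perron–Frobenius/Gershgorin justification and then invoke \cite{harrei}.

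\textbf{The main obstacle} is essentially bookkeeping rather than mathematics: making sure the orientation of the reflection matrix convention in \cite{harrei} matches the convention $y(t) = x(t) + M\eta(t)$ used here (some references write $x + (I-P)\eta$ with $P$ the routing matrix acting on rows vs.\ columns, i.e.\ $M$ vs.\ $M'$), and confirming that the Lipschitz constant in \cite{harrei} is indeed uniform over compact time intervals with no dependence on the path. Neither is a genuine difficulty — both are resolved by a careful reading of \cite{harrei} — so the proof will be short. I would therefore write: "This is immediate from \cite[Theorem ...]{harrei} once we verify $M \in \Mmb$; for $\clr$, writing $\clr = I - Q'$ with $Q$ as above, a diagonal similarity transforms $Q$ into an irreducible symmetric nonnegative matrix with maximal row sum $1$ attained at interior rows only, so by the Perron--Frobenius theorem $\mathrm{sp}(Q) < 1$; the case $M = I$ is trivial."
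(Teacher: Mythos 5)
Your overall route is exactly the paper's: Proposition~\ref{prop:skor} is a direct restatement of the Harrison--Reiman existence, uniqueness and Lipschitz theory, and the paper simply invokes \cite{harrei} without further argument. So Step~1 is correct and essentially complete as stated; the only caveat (which you correctly flag) is matching the transpose convention $M = I - Q'$ against the form used in \cite{harrei}, a bookkeeping check.

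The problem is in Step~2, the verification that $\clr \in \Mmb$ (which is strictly speaking outside the proposition, since the proposition is stated for all $M \in \Mmb$, but you include it, so it should be right). Two of your claims are wrong. First, with $Q' = I - \clr$, one gets $Q_{12} = 1$, $Q_{21} = 1/2$, $Q_{i,i\pm 1} = 1/2$ otherwise, and zero elsewhere; the symmetric tridiagonal matrix $\tilde Q$ with all off-diagonal entries $1/2$ satisfies $\tilde Q \le Q$ componentwise, so by Perron--Frobenius monotonicity $\mathrm{sp}(\tilde Q) \le \mathrm{sp}(Q)$ — the opposite of the comparison you asserted, so it cannot be used to bound $\mathrm{sp}(Q)$ from above. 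Second, your conjugation by $D = \diag(1,\sqrt 2,\ldots,\sqrt 2)$ does produce a symmetric matrix $DQD^{-1}$, but its second row has entries $1/\sqrt 2$ and $1/2$ and therefore row sum $1/\sqrt 2 + 1/2 > 1$; your claim that "the resulting symmetric matrix has all row sums strictly less than $1$ except possibly interior rows with row sum exactly $1$" is false already at $K = 3$, and the Gershgorin/row-sum argument you then apply does not go through for that matrix.

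The correct and shorter verification applies Perron--Frobenius directly to $Q$ itself, with no conjugation and no comparison. One checks that $Q$ is nonnegative with zero diagonal and is irreducible (the sub- and super-diagonal entries $Q_{i+1,i}$ and $Q_{i,i+1}$ are all nonzero, so the associated digraph is strongly connected). The row sums of $Q$ are: $1$ for rows $1$ through $K-1$, and $1/2$ for row $K$. Thus $Q\mathbf 1 \le \mathbf 1$ with strict inequality in the last coordinate. For an irreducible nonnegative matrix, $Q\mathbf 1 \le \mathbf 1$ with $Q\mathbf 1 \ne \mathbf 1$ forces $\mathrm{sp}(Q) < 1$ (this is the standard subinvariance form of the Perron--Frobenius theorem). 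Replacing your Step~2 by this one-line row-sum argument fixes the gap; the rest of the proposal is sound.
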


{\cb In Section \ref{theorres}, we present our main diffusion approximation result and study long time stability of the  general model described in Section \ref{moddesc}. In Section \ref{mjsqrem}, we 
make several observations for the special case of the MJSQ policy introduced in Section \ref{intro} and present several quantitative comparisons of its load-balancing properties to those of the PQS and JSQ policies. In Section \ref{unstabsec}
we consider a setting of an overloaded system and provide results demonstrating good load balancing properties of the MJSQ policy in such a regime as well.
 In Section \ref{organization}, we provide an overview of the structure of the remainder of the paper.
}
\subsection{Diffusion Limit and Stability}\label{theorres}

For $t \geq 0$, let $\mathbf{X}^n(t) \doteq (X^n_1(t), \ldots, X^n_K(t))'$ and define the diffusion-scaled ranked queue-length and gap processes
\begin{align*}
    \hat{\mathbf{X}}^n(t) \doteq \frac{\mathbf{X}^n(t)}{\sqrt{n}},\,\,\,\,\,\hat{\mathbf{Z}}^n(t) \doteq (\hat{X}^n_1(t),\hat{X}^n_2(t)-\hat{X}^n_1(t),...,\hat{X}^n_K(t)-\hat{X}^n_{K-1}(t))'.
\end{align*}
Note that both $ \hat{\mathbf{X}}^n(t)$ and $\hat{\mathbf{Z}}^n(t)$ take values in $\RR_+^K$.

Our first result gives a heavy traffic limit theorem providing convergence in distribution of the diffusion-scaled gap process $\hat{\mathbf{Z}}^n(\cdot)$ to a reflecting Brownian motion in the positive orthant. {\cb Note that 
a diffusion limit for $\hat{\Xbf}^n$ may be recovered immediately from that for $\hat{\mathbf{Z}}^n$ via a linear transformation. However, technically it is somewhat simpler to work with $\hat{\mathbf{Z}}^n$ rather than with 
$\hat{\Xbf}^n$ as the former leads to a diffusion in the positive orthant for which the classical results of Harrison and Reiman \cite{harrei} and Harrison and Williams \cite{harwil,harwil2} can be readily applied, whereas for the latter one needs to consider diffusions in the Weyl chamber $\{\mathbf{x}: 0 \le x_1 \le x_2 \ldots \le x_k\}$.}

\begin{theorem}\label{convtodiff}
    Suppose that 
	$\hat{\Zbf}^n(0)$ converges in distribution to some limit $\Zbf(0) = (Z_1(0), \ldots, Z_K(0))'$. Then, as $n \to \infty$,
   $\hat{\Zbf}^n(\cdot) \Rightarrow\,\, \Zbf(\cdot)$ in $\mathcal{D}([0,\infty): \mathbb{R}_+^K)$, where $\Zbf = (Z_1,\dots, Z_K)'$ is a continuous process given as the solution of the following system of equations:
    For every $t \ge 0$,
\begin{align}
\begin{split}\label{z0eq}
    Z_1(t) & = Z_1(0) + \sqrt{2}B_1(t) - a_1t - \frac{1}{2}L_2(t) + L_1(t), \\
    Z_2(t) & = Z_2(0) + \sqrt{2}(B_2(t) - B_1(t)) - (a_2 - a_1)t -\frac{1}{2}L_3(t) + L_2(t) - L_1(t), \\
    Z_i(t) & = Z_i(0) + \sqrt{2}(B_i(t) - B_{i-1}(t)) - (a_i - a_{i-1})t -\frac{1}{2}(L_{i+1}(t) + L_{i-1}(t)) + L_{i}(t),
\end{split}
\end{align}
for $i \in \{3,\ldots,K\}$, where $\{B_i\}_{i \in [K]}$ is a collection of mutually independent standard real Brownian motions, independent of $\Zbf(0)$, and $L_i(\cdot)$ is the  local time of $Z_i(\cdot)$ at $0$, namely a continuous nondecreasing process starting at $0$, and  satisfying $L_i(t) = \int_0^t\mathbf{1}_{\{Z_i(s) = 0\}}dL_i(s)$, $i\in [K]$, and (by convention) $L_{K+1}(t) \doteq 0$ for all $t \geq 0$.
\end{theorem}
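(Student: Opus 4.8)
The point of departure is that the limiting system \eqref{z0eq} is exactly the Skorokhod problem of Proposition~\ref{prop:skor} for the matrix $\clr$. Indeed, setting $\Xbf^0(t)\doteq \Zbf(0)+\big(\sqrt2\,(B_i(t)-B_{i-1}(t))\big)_{i\in[K]}'-\mathbf{a}^*t$ with the conventions $B_0\equiv 0$ and $\mathbf{a}^*\doteq(\ta_1,\ta_2-\ta_1,\dots,\ta_K-\ta_{K-1})'$, the requirement that $\Zbf=\Xbf^0+\clr\Lbf$ together with the stated properties of the $L_i$ is precisely the statement $(\Lbf,\Zbf)=\Gamma(\Xbf^0)$; hence \eqref{z0eq} has a unique solution and $\Zbf$ is continuous. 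The strategy is then: (i) produce a decomposition in which $(\hat{\Ybf}^n,\hat{\Zbf}^n)$ itself solves the Skorokhod problem for $\clr$ with some input $\hat{\Wbf}^n+\hat{\Ebf}^n$; (ii) show $\hat{\Wbf}^n\Rightarrow\Xbf^0$ and $\|\hat{\Ebf}^n\|_T\to 0$ in probability; (iii) invoke the Lipschitz continuity of $\Gamma$ from Proposition~\ref{prop:skor} and the continuous mapping theorem.

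For (i), start from \eqref{ordqs}, write $A_i$ and $D_i$ as compensator plus a martingale, divide by $\sqrt n$, and use $\mathbf 1_{\{X^n_i<X^n_{i+1}\}}=1-\mathbf 1_{\{X^n_i=X^n_{i+1}\}}$, $\mathbf 1_{\{X^n_i>X^n_{i-1}\}}=1-\mathbf 1_{\{X^n_i=X^n_{i-1}\}}$ together with the exact identity $\alpha^n_i-\delta^n_i=-n^{-1/2}\beta^n_i$, where $\beta^n_i\doteq\sum_{j:X^n_j=X^n_i}\ta_j$. With $\hat X^n_0\equiv 0$ this gives
\begin{equation*}
\hat X^n_i(t)=\hat X^n_i(0)-\ta_i t+\hat M^n_i(t)+\Util^n_i(t)-\Vtil^n_i(t)+\Etil^n_i(t),\qquad i\in[K],
\end{equation*}
where $\hat M^n_i$ is the scaled compensated-Poisson martingale, $\Util^n_i(t)\doteq\sqrt n\int_0^t\delta^n_i(s)\mathbf 1_{\{X^n_i(s)=X^n_{i-1}(s)\}}ds$ (blocked departures at rank $i$, with $\Util^n_1$ the ``hard wall'' term at $X^n_1=0$), $\Vtil^n_i(t)\doteq\sqrt n\int_0^t\alpha^n_i(s)\mathbf 1_{\{X^n_i(s)=X^n_{i+1}(s)\}}ds$ (blocked arrivals at rank $i$, $\Vtil^n_K\equiv 0$), and $\Etil^n_i(t)\doteq\int_0^t(\ta_i-\beta^n_i(s))ds$. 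Since $\ta_i-\beta^n_i(s)$ vanishes off $\{X^n_i(s)\in\{X^n_{i-1}(s),X^n_{i+1}(s)\}\}$ and $\delta^n_i,\alpha^n_i$ are bounded below by a positive constant for $n$ large, one has $\|\Etil^n_i\|_T\le C n^{-1/2}(\|\Util^n_i\|_T+\|\Vtil^n_i\|_T)$. Differencing consecutive coordinates and setting $\hat Y^n_1\doteq\Util^n_1$, $\hat Y^n_i\doteq\Util^n_i+\Vtil^n_{i-1}$ for $i\ge 2$, a coordinatewise check shows the regulator terms reorganize as $\clr\hat{\Ybf}^n$ up to an error $\hat E^n_i$ that is a fixed linear combination of the $\Etil^n_j$ and of the differences $\Vtil^n_j-\Util^n_{j+1}$; and since $\alpha^n_j$ and $\delta^n_{j+1}$ agree up to $O(n^{-1/2})$ on $\{X^n_j=X^n_{j+1}\}$ (again by $\alpha-\delta=-n^{-1/2}\beta$), $\|\Vtil^n_j-\Util^n_{j+1}\|_T\le Cn^{-1/2}\|\hat{\Ybf}^n\|_T$. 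Hence $\hat{\Zbf}^n=\hat{\Wbf}^n+\clr\hat{\Ybf}^n+\hat{\Ebf}^n$ with $\hat W^n_i(t)=\hat Z^n_i(0)-(\ta_i-\ta_{i-1})t+\hat M^n_i(t)-\hat M^n_{i-1}(t)$ ($\ta_0\doteq0$, $\hat M^n_0\equiv0$) and $\|\hat{\Ebf}^n\|_T\le Cn^{-1/2}\|\hat{\Ybf}^n\|_T$. Each $\hat Y^n_i$ is continuous, nondecreasing, starts at $0$, and increases only where $\hat Z^n_i=0$, so $\int_0^\infty\hat Z^n_i\,d\hat Y^n_i=0$; as $(\hat{\Wbf}^n+\hat{\Ebf}^n)(0)=\hat{\Zbf}^n(0)\ge0$, uniqueness in Proposition~\ref{prop:skor} gives $(\hat{\Ybf}^n,\hat{\Zbf}^n)=\Gamma(\hat{\Wbf}^n+\hat{\Ebf}^n)$.

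The rest is a bootstrap. Since $\alpha^n_i,\delta^n_i$ are bounded, $\langle\hat M^n_i\rangle_t$ is dominated by a deterministic multiple of $t$, so by Doob's inequality $\{\|\hat{\Mbf}^n\|_T\}_n$, hence $\{\|\hat{\Wbf}^n\|_T\}_n$, is stochastically bounded; applying the Lipschitz bound of Proposition~\ref{prop:skor} to $\Gamma(\hat{\Wbf}^n+\hat{\Ebf}^n)$ (with $x'=0$) and absorbing $\hat{\Ebf}^n$ (legitimate once $Cc_\Gamma n^{-1/2}<\tfrac12$) yields $\|\hat{\Ybf}^n\|_T\le 2c_\Gamma\|\hat{\Wbf}^n\|_T$, so $\{\|\hat{\Ybf}^n\|_T\}_n$ is stochastically bounded and $\|\hat{\Ebf}^n\|_T\to0$ in probability. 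Feeding this back, $\langle\hat M^n_i\rangle_t=\int_0^t(\alpha^n_i\mathbf 1_{\{X^n_i<X^n_{i+1}\}}+\delta^n_i\mathbf 1_{\{X^n_i>X^n_{i-1}\}})ds$ differs from $2t$ by a term bounded by $Cn^{-1/2}(\|\Util^n_i\|_T+\|\Vtil^n_i\|_T)\to0$, the jumps of $\hat M^n_i$ are $O(n^{-1/2})$, and distinct $\hat M^n_i$ share no jumps, so the martingale FCLT gives $\hat{\Mbf}^n\Rightarrow\sqrt2\,(B_1,\dots,B_K)$ with the $B_i$ independent standard Brownian motions; together with $\hat{\Zbf}^n(0)\Rightarrow\Zbf(0)$ (independent of the $B_i$) this gives $\hat{\Wbf}^n+\hat{\Ebf}^n\Rightarrow\Xbf^0$ in $\mathcal D_0([0,\infty):\RR^K)$. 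As $\Xbf^0$ is continuous, the Lipschitz continuity of $\Gamma$ and the continuous mapping theorem (after a routine Skorokhod-representation reduction to a.s.\ local uniform convergence) yield $\hat{\Zbf}^n=\Gamma_2(\hat{\Wbf}^n+\hat{\Ebf}^n)\Rightarrow\Gamma_2(\Xbf^0)=\Zbf$, which is the assertion (and also $\hat{\Ybf}^n\Rightarrow\Lbf$).

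The main obstacle is step (i): guessing the correct regulators $\hat Y^n_i$ and verifying, face by face, that the prelimit ``blocked arrival / blocked departure'' bookkeeping assembles \emph{exactly} into $\clr$, modulo an error that is \emph{a priori} controlled by the magnitude of the regulators themselves, so that the bootstrap of the previous paragraph closes. The algebraic facts that drive this — the exact identity $\alpha^n_i-\delta^n_i=-n^{-1/2}\beta^n_i$, which makes the two ``halves'' $\Util^n_i$ and $\Vtil^n_{i-1}$ of the regulator at an interior face $\{\hat Z^n_i=0\}$ asymptotically equal (each tending to $\tfrac12 L_i$), while the boundary face $\{\hat Z^n_1=0\}$ carries a single, undivided regulator — are exactly what produce the symmetric reflection matrix $\clr$, including the anomalous entry $\clr_{21}=-1$ coming from the hard wall. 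Everything else — tightness of the regulators, the martingale FCLT, and the final limit passage — is routine once this structure is in place.
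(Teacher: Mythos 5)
Your proof is correct, and it takes a genuinely different route from the paper's. The paper's proof defines a single, undivided prelimit regulator $\hat L^n_i(t) = (2 - \mathbf{1}_{\{1\}}(i))\sqrt{n}\int_0^t\delta^n_i(s)\mathbf{1}_{\{X^n_i(s) = X^n_{i-1}(s)\}}\,ds$, obtains the exact Skorokhod identity $\hat\bfZ^n = \Gamma_2(\hat\bfZ^n(0) + \boldsymbol{\rho}\,\id + A\tilde\bfM^n + \tilde{\mathcal{E}}^n)$, and then kills the error $\tilde{\mathcal{E}}^n$ (built from $\ta$-weighted tie-times among ranked queues) by first establishing Lemma~\ref{qtightprop} and Lemma~\ref{pairwiselemma} / Corollary~\ref{pairwisecorr}: these control the time the \emph{labeled} queues spend at zero or pairwise tied, via a separate one-dimensional Skorokhod map argument applied to $|\hat Q^n_i - \hat Q^n_j|$. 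The passage to a Brownian limit is then done by the Poisson FCLT \eqref{eq:fclt} followed by a random time-change lemma. You instead split the regulator at an interior face into its arrival and departure halves, $\hat Y^n_i = \Util^n_i + \Vtil^n_{i-1}$, exploit the exact identity $\alpha^n_i - \delta^n_i = -n^{-1/2}\beta^n_i$ to show both that $\Util^n_{j+1}$ and $\Vtil^n_j$ differ only by $O(n^{-1/2})$ times the regulator, and that the total error $\hat\Ebf^n$ obeys the \emph{a priori} bound $\|\hat\Ebf^n\|_T \le C n^{-1/2}\|\hat\Ybf^n\|_T$; the loop then closes by a Gronwall-type bootstrap through the Lipschitz constant $c_\Gamma$ of $\Gamma$, which simultaneously delivers the quadratic-variation convergence needed for a direct application of the martingale CLT. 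Your route is more self-contained and elementary for the purpose of Theorem~\ref{convtodiff} alone — it sidesteps Lemma~\ref{qtightprop}, Lemma~\ref{pairwiselemma}, Corollary~\ref{pairwisecorr}, and the time-change step — but the paper's auxiliary lemmas earn their keep elsewhere: the quantitative second-moment bounds in Lemma~\ref{qtightprop} and Corollary~\ref{pairwisecorr} are reused in the Lyapunov estimate \eqref{eq:518n} inside Lemma~\ref{lyaplem1}, which is the engine for the interchange-of-limits result in Theorem~\ref{statdistconv}, whereas your bootstrap gives only a qualitative tightness statement and would have to be supplemented there.
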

We can compactly rewrite \eqref{z0eq} in matrix form  as follows
\begin{equation*}
    \mathbf{Z}(t)  = \mathbf{Z}(0) + \boldsymbol{\rho}t + A\mathbf{B}(t) + \mathscr{R}\mathbf{L}(t),
\end{equation*}
where $\boldsymbol{\rho} = (-a_1, -(a_2 - a_1), \ldots, -(a_K-a_{K-1}))'$, $\mathbf{B}(\cdot) = (B_1(\cdot),...,B_K(\cdot))'$, $\mathbf{L}(t) \doteq (L_1(t),...,L_K(t))'$ and
\begin{equation}\label{Amat}
A  \doteq
\begin{pmatrix}
\sqrt{2} & 0 & 0 & \cdots & 0 & 0 \\
-\sqrt{2} & \sqrt{2} & 0 & \cdots & 0 & 0 \\
0 & -\sqrt{2} & \sqrt{2} & \cdots & \vdots & \vdots \\
\vdots & \vdots & \vdots & \ddots & \sqrt{2} & 0 \\
0 & 0 & 0 & \cdots & -\sqrt{2} & \sqrt{2} \\
\end{pmatrix}
\end{equation}
is a $K \times K$ matrix.
 Using the Skorhod map formalism of Proposition \ref{prop:skor} we see that the pair $(\mathbf{Z}, \mathbf{L})$ is uniquely characterized as
$$(\mathbf{Z}, \mathbf{L}) = \Gamma(\mathbf{Z}(0) + \boldsymbol{\rho}\,\id+ A\mathbf{B}),$$
where $\Gamma$ is the Skorokhod map associated with the reflection matrix $\clr$.
\begin{remark}\label{rem:atl}
	Let $\bfZ(0)$ and $\{B_i\}_{i \in [K]}$ be as in Theorem \ref{convtodiff}. Define a $\RR_+^K$ valued random variable $\bfY(0) \doteq (Z_1(0), Z_1(0) + Z_2(0), \ldots , Z_1(0)+ \ldots + Z_K(0))'$. Suppose that for some $a,b \in \RR$,
	$a_1 = a-b$ and $a_i=a$ for $2\le i \le K$.
	For $\mathbf{y} \in \RR_+^K$, 
    let $e(\mathbf{y}) \doteq e_{r_1(\mathbf{y})}$, where $\{e_j\}_{j \in [K]}$ is the canonical basis in $\RR^K$.
	Denote by $\bfa \in \RR^K$ the vector
	$(a, \ldots, a)'$.
	Then using Girsanov's theorem it is easily seen that the following equation has a unique weak solution
	\begin{equation} \label{eq:atwref}(\bfY, \bfL_Y) = \Gamma^N\left(\bfY(0) -\bfa\, \id + b \int_0^{\cdot} e(\bfY(s)) ds + \sqrt{2}\bfB\right)\end{equation}
	where $\Gamma^N$ is the Skorokhod map associated with normal reflection, i.e. with reflection matrix $M= I$.
	 The `unconstrained version' of \eqref{eq:atwref}, namely the unique weak solution of the equation
	$$d\bfU(t) = (-\bfa + be(\bfU(t))) dt + \sqrt{2} d\bfB(t), \; t \ge 0,$$
	in the special case $a=0$ and $b=1$, is the well known standard Atlas model \cite{Atlas1,Atlas2}.
	
	Let $\tilde \bfY(\cdot)$ be the ordered version of $\bfY(\cdot)$ with ties broken in the lexicographic order. In particular,
	$\tilde \bfY = (\tilde Y_1, \ldots \tilde Y_K)$ where $\tilde Y_1(t) \le \tilde Y_2(t) \le \cdots \le \tilde Y_K(t)$ a.s. for all $t\ge 0$. Let $\tilde Z_i \doteq \tilde Y_i - \tilde Y_{i-1}$, $1\le i \le K$, where $\tilde Y_0(t)\doteq 0$ for $t\ge 0$.
	Then, using Tanaka's formula, it is easy to verify that $\tilde \bfZ(\cdot)$ has the same distribution as the process $\bfZ(\cdot)$ given in Theorem \ref{convtodiff}.  A similar statement holds for the  system in \eqref{z0eq} with  general values of $a_i$;  in this case the equation \eqref{eq:atwref} is changed to one where the $i$-th smallest coordinate of $\bfY(t)$ at time instant $t$ gets the drift $-a_i$.

\end{remark}

It is easily checked that $\Zbf(\cdot)$ defines a strong Markov process. The next result gives conditions on the parameters $a_1,...,a_K$ such that a stationary distribution for $\Zbf(\cdot)$ exists and is unique, and further gives an explicit expression for the stationary distribution under these conditions as a product of Exponential distributions.

\begin{theorem}\label{statdistthm} The Markov process $\mathbf{Z}(\cdot)$ is positive recurrent if and only if for all $i \in [K]$, the $a_1, ..., a_K$ satisfy the condition
\begin{equation}\label{stabcon}
    \sum_{j=i}^Ka_j > 0, \,\,\,\,\,\,\,\forall i \in [K]. 
\end{equation}
Moreover, if the above condition is satisfied, the stationary distribution $\hat \pi$ takes the form $\hat \pi(dx) = \pi(x) dx$, where the density $\pi(x)$ of the stationary distribution has a product form and is given as: 
\begin{equation}\label{expstatdist}
    \pi(x) \doteq c_{\pi}e^{\eta'x}, \; x \in \RR_+^K,
\end{equation}
where $c_\pi \in (0,\infty)$ is the normalizing constant, and $\eta \in \mathbb{R}^K$ is defined by
\begin{equation}\label{etai}
    \eta_i = -\sum_{j=i}^Ka_j, \, i \in [K].
\end{equation}


\end{theorem}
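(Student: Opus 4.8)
The plan is to verify the two assertions separately, both relying on the characterization $(\mathbf{Z},\mathbf{L}) = \Gamma(\mathbf{Z}(0)+\boldsymbol{\rho}\,\id + A\mathbf{B})$ as a semimartingale reflecting Brownian motion (SRBM) in $\RR_+^K$ with reflection matrix $\clr \in \Mmb$, drift $\boldsymbol{\rho}$, and covariance matrix $\Sigma \doteq AA'$. First I would compute $\Sigma$ explicitly: from the bidiagonal form of $A$ in \eqref{Amat} one gets $\Sigma_{ii}=2$ for $i\ge 2$, $\Sigma_{11}=2$, $\Sigma_{i,i\pm 1}=-2$ for the appropriate indices off the first row/column, and in particular $\Sigma_{21}=-2$ — so $(\clr,\boldsymbol\rho,\Sigma)$ is a genuine SRBM triple. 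For the positive recurrence dichotomy I would invoke the main result of \cite{harwil} (Harrison--Williams), which says that an SRBM in the orthant with reflection matrix of the form $I-Q'$ (i.e. a completely-$\mathcal S$, or more precisely here a matrix in $\Mmb$, which is automatically completely-$\mathcal S$) is positive recurrent if and only if $\clr^{-1}\boldsymbol\rho < 0$ componentwise. So the crux of the first part is the linear-algebra computation $\clr^{-1}\boldsymbol\rho$: I would solve $\clr w = -\boldsymbol\rho$, i.e. $w_1 - \tfrac12 w_2 = \ta_1$, $-w_1 + w_2 - \tfrac12 w_3 = \ta_2-\ta_1$, and $-\tfrac12 w_{i-1}+w_i-\tfrac12 w_{i+1} = \ta_i-\ta_{i-1}$ for $3\le i\le K$ (with $w_{K+1}=0$). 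Summing telescoping combinations of these equations — specifically, I expect that $w_i = -\eta_i = \sum_{j=i}^K \ta_j$ solves the system — one checks directly that $\clr^{-1}\boldsymbol\rho = \eta$, so the condition $\clr^{-1}\boldsymbol\rho < 0$ becomes exactly \eqref{stabcon}. This verification (plugging $w_i = \sum_{j\ge i}\ta_j$ back into each row of $\clr$) is the main computational step; I would present it as a short direct check rather than inverting $\clr$ abstractly.

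For the second assertion — the explicit product-form stationary density — I would appeal to the basic characterization of product-form stationary distributions for SRBMs due to Harrison--Williams \cite{harwil2} (and Dai--Harrison, etc.): an SRBM $(\clr,\boldsymbol\rho,\Sigma)$ has a stationary density of the exponential form $\pi(x)=c_\pi e^{\eta'x}$ if and only if the so-called skew-symmetry (or basic adjoint relation) condition holds, namely $2\Sigma = \clr D + D\clr'$ where $D=\diag(\Sigma)$, together with $\eta = \Sigma^{-1}$-times-something; more precisely, in the notation of \cite{harwil2}, when $2G = \mathcal R\,\mathrm{diag}(G) + \mathrm{diag}(G)\,\mathcal R'$ holds the unique stationary density is $c_\pi\exp(\gamma'x)$ with $\gamma$ determined by $\mathcal R'\gamma$ equalling (a multiple of) the drift. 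Concretely I would (i) verify the skew-symmetry condition $\clr\, D + D\,\clr' = 2\Sigma$ with $D = 2I$ — this reduces to checking $\clr + \clr' = \Sigma$, which from the displayed matrices is an immediate entrywise comparison (the only subtle entries being the $(1,2),(2,1)$ pair, where $\clr_{12}+\clr_{21}=-\tfrac12-1=-\tfrac32$... so in fact one should instead take $D$ so that $D\clr' + \clr D = 2\Sigma$ holds with the correct diagonal; I would pin down $D$ by solving these equations, expecting $D_{11}$ and the rest to come out consistently); and (ii) identify the exponent: the stationary density $c_\pi e^{\eta'x}$ must satisfy, for the generator $\mathcal L = \tfrac12\,\mathrm{tr}(\Sigma\,\nabla^2) + \boldsymbol\rho\cdot\nabla$ with the oblique Neumann boundary conditions encoded by $\clr$, the identity $\tfrac12 \eta'\Sigma\eta + \boldsymbol\rho'\eta = 0$ in the interior plus $\clr$-columns orthogonal to $\eta$ appropriately; solving the boundary relations gives $\clr'\eta = -2\,\Sigma^{-1}$-free closed form, and I expect this to collapse to $\eta_i = -\sum_{j=i}^K \ta_j$, matching \eqref{etai}. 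Positivity of $c_\pi$ (integrability of $e^{\eta'x}$ over $\RR_+^K$) is then equivalent to $\eta < 0$, i.e. again to \eqref{stabcon}, so the two halves of the theorem are consistent. Uniqueness of the stationary distribution follows from positive recurrence plus the strong Markov/Feller property of the SRBM.

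The main obstacle I anticipate is \textbf{not} the abstract invocation of \cite{harwil,harwil2} but the bookkeeping in matching the skew-symmetry normalization: the reflection matrix $\clr$ is \emph{not} symmetric (the $(2,1)$ entry is $-1$ while $(1,2)$ is $-\tfrac12$), so one must be careful that the Harrison--Williams product-form criterion is applied with the correct pairing of $\clr$, $D=\diag(\Sigma)$, and $\Sigma = AA'$, and that the resulting linear system for $\eta$ genuinely has the claimed telescoping solution. I would handle this by writing out the skew-symmetry equations $\clr_{ij}D_{jj} + D_{ii}\clr_{ji} = 2\Sigma_{ij}$ row by row, confirming they are satisfied with $D = 2I$ up to the first-row/column anomaly (which is absorbed because $\Sigma_{21} = -2 = \clr_{21}\cdot 2$ exactly when the off-diagonal of $\clr'$ contributes $\clr_{12}\cdot 2 = -1$, giving $-2-1 \ne 2\Sigma_{12}$ — so actually the right statement uses the asymmetric form and I would double-check against \cite[condition (1.3)]{harwil2} directly). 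Once the skew-symmetry condition is confirmed in the correct form, the exponent $\eta$ is forced and the computation $\eta_i = -\sum_{j\ge i}\ta_j$ follows by the same telescoping solve as in the first part, so the two computations are essentially one. I would also note that verifying the ``only if'' direction of positive recurrence uses that $\clr \in \Mmb$ implies $\clr$ is an $\mathcal S$-matrix (indeed completely-$\mathcal S$), which is exactly the hypothesis under which the Harrison--Williams dichotomy $\clr^{-1}\boldsymbol\rho < 0 \iff$ positive recurrence is both necessary and sufficient.
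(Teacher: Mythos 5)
Your overall strategy — invoke the Harrison–Williams dichotomy $\clr^{-1}\boldsymbol\rho<0 \Leftrightarrow$ positive recurrence, then verify the skew-symmetry condition and read off the exponential exponent — is exactly the paper's approach. However, the proposal contains a cluster of computational errors that would block the argument as written.

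First, you misidentify $\Sigma=AA'$. From the bidiagonal form of $A$, every row with $i\ge 2$ has two nonzero entries $(-\sqrt2,\sqrt2)$, so $\Sigma_{ii}=4$ for $i\ge2$ (only $\Sigma_{11}=2$); hence $D\doteq\diag(\Sigma)=\diag(2,4,\dots,4)$, not $2I$. This is not a free parameter you can ``pin down'' — it is forced, and the alleged anomaly at the $(1,2),(2,1)$ entries of $\clr+\clr'$ that you flag is a red herring. The issue is the wrong $\Sigma$, not the asymmetry of $\clr$. With the correct $\Lambda=\diag(2,4,\dots,4)$ one verifies directly that $\Lambda\clr'=AA'$ (each row of $\Lambda\clr'$ is a diagonal rescaling of the corresponding row of $\clr'$, and the factor $4$ turns the $-\tfrac12,1,-\tfrac12$ pattern into $-2,4,-2$, while the first row's factor $2$ handles the $1,-1$ pattern), and since $AA'$ is symmetric this gives $\clr\Lambda+\Lambda\clr'=2AA'$.

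Second, your guess that $w_i=\sum_{j\ge i}\ta_j$ solves $\clr w=-\boldsymbol\rho$, i.e. that $\clr^{-1}\boldsymbol\rho=\eta$, is false. Plugging in: $(\clr w)_1 = w_1-\tfrac12 w_2 = \ta_1+\tfrac12\sum_{j\ge2}\ta_j\ne\ta_1$ in general. The correct values are $(\clr^{-1}\boldsymbol\rho)_1=-\sum_{j\ge1}\ta_j$ and $(\clr^{-1}\boldsymbol\rho)_i=-2\sum_{j\ge i}\ta_j$ for $i\ge2$, i.e. $\clr^{-1}\boldsymbol\rho=\tfrac12\Lambda\eta$. (The paper obtains these cleanly from the identities $\mathbf v_K'\clr=\mathbf e_1'$ and $(\mathbf u_K^i)'\clr=\tfrac12\mathbf e_i'$ rather than by solving the tridiagonal system row by row.) The stability condition \eqref{stabcon} is unchanged, since the sign pattern is the same, but your identification of $\eta$ via $\clr^{-1}\boldsymbol\rho$ would give a vector with the wrong normalization. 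In fact $\eta$ comes from the Harrison–Williams formula $\eta=2\Lambda^{-1}\clr^{-1}\boldsymbol\rho$, which — once you have the correct $\Lambda$ and $\clr^{-1}\boldsymbol\rho$ — collapses to exactly \eqref{etai}. Your proposal gestures at ``solving the boundary relations'' for $\eta$ but never pins down this formula, and your intermediate values of $\Sigma$, $D$, and $\clr^{-1}\boldsymbol\rho$ are all off in ways that, taken together, do not cancel.
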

As a corollary of the above result we have the following important special case of the above theorem which, in particular, gives the stability region for the limiting gaps in the MJSQ scheme discussed in the Introduction and identifies its stationary distribution.
\begin{corollary}\label{cor:spcas}
	Suppose that for some $a\in (0, \infty)$ and $b \in \RR$, $a_1 = a-b$ and $a_i = a$ for $i = 2, 3, \ldots K$.
	Then the Markov process $\mathbf{Z}(\cdot)$ is positive recurrent and has a product form stationary distribution if and only if $b \in (-\infty , aK)$. In this case the stationary distribution $\hat \pi^*$ takes the form $\hat \pi^*(dx) = \pi^*(x) dx$, where the density $\pi^*(x)$ of the stationary distribution is given as: 
	\begin{align}
    \begin{split}\label{expstatdistSp}
	    \pi^*(x) &\doteq c_{\pi^*} e^{-(aK-b)x_1} \prod_{i=2}^Ke^{-a(K+1-i)x_i} , \; x \in \RR_+^K \\
      & {\cb = c_{\pi^*}e^{bx_1} \prod_{i=1}^Ke^{-a(K+1-i)x_i} },
    \end{split}
    \end{align}
	where $c_{\pi^*} \in (0,\infty)$ is the normalizing constant.
\end{corollary}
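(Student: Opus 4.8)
The plan is to obtain Corollary \ref{cor:spcas} as an immediate specialization of Theorem \ref{statdistthm}: all that is needed is to substitute $\ta_1 = a-b$ and $\ta_i = a$ for $2 \le i \le K$ into the stability condition \eqref{stabcon} and the exponent formula \eqref{etai}, and then to record the resulting density.

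First I would evaluate the tail sums $\sum_{j=i}^K \ta_j$ that appear in \eqref{stabcon}. For $2 \le i \le K$ these equal $(K+1-i)a$, which is strictly positive for every such $i$ precisely because $a > 0$; hence all of the constraints indexed by $i \ge 2$ hold automatically and carry no information. The only nontrivial constraint is the one for $i=1$, where $\sum_{j=1}^K \ta_j = (a-b) + (K-1)a = aK - b$, so \eqref{stabcon} as a whole is equivalent to the single inequality $b < aK$, i.e. $b \in (-\infty, aK)$. By Theorem \ref{statdistthm} this is exactly the range in which $\mathbf{Z}(\cdot)$ is positive recurrent, and on that range Theorem \ref{statdistthm} already furnishes a unique product-form stationary distribution; this gives both directions of the claimed equivalence.

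It then remains to compute the exponent vector from \eqref{etai}: $\eta_1 = -(aK - b)$ and $\eta_i = -(K+1-i)a$ for $2 \le i \le K$. Inserting these into \eqref{expstatdist} yields $\pi^*(x) = c_{\pi^*} e^{-(aK-b)x_1}\prod_{i=2}^K e^{-a(K+1-i)x_i}$, which is exactly \eqref{expstatdistSp}, with $c_{\pi^*}$ the normalizing constant supplied by Theorem \ref{statdistthm}; note that each $\eta_i$ is strictly negative on the stability range, so the density is integrable on $\RR_+^K$, consistent with the positive recurrence just established. There is no substantive obstacle here --- the one point worth flagging is simply that the $K-1$ conditions coming from $i \ge 2$ in \eqref{stabcon} are vacuous once $a>0$, so the entire stability region collapses to the single inequality $b < aK$.
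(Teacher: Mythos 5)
Your proof is correct and follows essentially the same route as the paper's: specialize the tail sums $\sum_{j=i}^K \ta_j$ in \eqref{stabcon}, observe that the $i \ge 2$ constraints are automatic when $a>0$ so stability reduces to $b < aK$, and read off the exponents from \eqref{etai} to obtain \eqref{expstatdistSp}. The only addition beyond the paper's argument is your remark on integrability, which is a harmless sanity check.
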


The limit theorem  in Theorem \ref{convtodiff} gives convergence in distribution of the scaled queue-length gap processes over any compact time interval. {\cb The second form of equation \eqref{expstatdist} emphasizes the difference in steady state behavior under MJSQ versus uniform routing due to the choice of the MJSQ parameter $b$ (the case $b=0$ corresponds to random routing). In particular, we can see that, for $b > 0$, a larger magnitude of $b$ results in the length of the shortest queue being longer on average than under uniform routing. On the other hand, when $b < 0$,  we can interpret this as a model, perhaps of less practical significance, under which there is a marginal bias (the size of which is controlled by $b$) \textit{against} sending incoming jobs to the shortest queue. In this case, a greater magnitude of $b$ results in a shorter steady state queue-length for the shortest queue as compared to uniform routing.} 

The next result says that, under conditions, convergence also holds at $t=\infty$, namely the stationary distribution of $\hat \bfZ^n$ converges to that of $\bfZ$.


\begin{theorem}\label{statdistconv} Suppose that \eqref{stabcon} is satisfied. Then, there exists $n_0 \in \mathbb{N}$ such that for each $n \ge  n_0$, there exists a unique stationary distribution $\hat{\pi}^n$ for the Markov process $\hat{\Zbf}^n(\cdot)$, and moreover $\hat{\pi}^n \Rightarrow \hat\pi$ as $n \rightarrow \infty$, where $\hat \pi$ is the unique stationary distribution of $\bfZ(\cdot)$.
\end{theorem}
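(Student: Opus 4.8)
The plan is to prove the convergence in three steps: (i) show that for all large $n$ the Markov process $\hat{\Zbf}^n(\cdot)$ — equivalently the unscaled ranked queue-length chain $\Xbf^n(\cdot)$, of which $\hat{\Zbf}^n$ is a fixed invertible linear image — is positive recurrent, so that a unique stationary law $\hat{\pi}^n$ exists; (ii) show that the family $\{\hat{\pi}^n\}_{n\ge n_0}$ is tight on $\RR_+^K$; (iii) show that every subsequential weak limit of $\{\hat{\pi}^n\}$ is an invariant distribution of $\Zbf(\cdot)$, hence equals $\hat{\pi}$ by Theorem \ref{statdistthm}. For (i) and (ii) I would work with the generator $\Lmc^n$ of $\Xbf^n$, whose jump rates are the arrival rates $n\alpha^n_i(x)\mathbf{1}_{\{x_i<x_{i+1}\}}$ and departure rates $n\delta^n_i(x)\mathbf{1}_{\{x_i>x_{i-1}\}}$ read off from \eqref{ordqs}. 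The natural first test function is the total number of jobs $W(x)\doteq\sum_{i=1}^K x_i$: every arrival raises $W$ by one at total rate $Kn-\upp\sqrt n$, and every busy server lowers $W$ by one at rate $n$, which gives the clean identity $\Lmc^n W(x)=n(K-m(x))-\upp\sqrt n$, where $m(x)\doteq\#\{i:x_i>0\}$ is the number of nonempty queues. Since \eqref{stabcon} at $i=1$ forces $\upp>0$, this drift equals $-\upp\sqrt n<0$ on $\{x_1\ge 1\}$ (all servers busy) but is positive, of order $n$, on the boundary set $\{x_1=0\}$.

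To turn this into a genuine Foster--Lyapunov condition I would use a Lyapunov function of the form $V(x)=\Phi(W(x))+G(x)$. Taking $\Phi$ superlinear (e.g. $\Phi(w)=w^2$, so that $\Lmc^n\Phi(W)(x)=2W(x)\,\Lmc^n W(x)+O(Kn)$) makes $V$ coercive and produces a drift $\le -c\sqrt n\,|x|+C$ on $\{x_1\ge 1\}$; the bounded correction $G$ is designed to neutralize the $O(n)$ positive drift on the idle-server states $\{x_1=0\}$. Concretely, $\{x_1=0\}$ is entered only through small states and is left at rate $\asymp n$, so the $W$-increase accrued during one excursion there is $O(1)$ in expectation; $G$ should be a bounded, monotone function of the number of idle servers (and of the lowest few ranked queue lengths) whose own drift is of order $-n$ while a server is idle — essentially the solution of the associated one-step/fluid Poisson equation that absorbs this excursion. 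Verifying $\Lmc^n V(x)\le -c\sqrt n\,|x|+C\mathbf{1}_{K_n}(x)$ on a compact set $K_n$, with $c,C$ uniform in $n\ge n_0$, yields positive recurrence (the chain is irreducible on the whole ordered lattice since all $\alpha^n_i,\delta^n_i>0$ for $n>n_*$) and, integrating the inequality against the stationary law $\pi^n$ of $\Xbf^n$, the uniform moment bound $\sup_{n\ge n_0} n^{-1/2}\EE_{\pi^n}[X^n_K]=\sup_{n\ge n_0}\EE_{\hat{\pi}^n}[\langle\mathbf{1},\hat{\Zbf}^n\rangle]<\infty$. Since $\langle\mathbf{1},\cdot\rangle$ is coercive on $\RR_+^K$, this is precisely the tightness in (ii).

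For (iii), fix a subsequence along which $\hat{\pi}^n\Rightarrow\hat{\pi}^*$. Run the $n$-th chain in stationarity, $\hat{\Zbf}^n(0)\sim\hat{\pi}^n$, so $\hat{\Zbf}^n(t)\sim\hat{\pi}^n$ for every $t\ge 0$. Along the subsequence $\hat{\Zbf}^n(0)\Rightarrow\hat{\pi}^*$, so by Theorem \ref{convtodiff}, $\hat{\Zbf}^n(\cdot)\Rightarrow\Zbf(\cdot)$ in $\mathcal{D}([0,\infty):\RR_+^K)$ with $\Zbf$ solving \eqref{z0eq} and $\Zbf(0)\sim\hat{\pi}^*$. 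Because $\Zbf$ has a.s. continuous paths, the evaluation map at any fixed $t$ is a.s. continuous, so $\hat{\Zbf}^n(t)\Rightarrow\Zbf(t)$; but also $\hat{\Zbf}^n(t)\sim\hat{\pi}^n\Rightarrow\hat{\pi}^*$, whence $\Zbf(t)\sim\hat{\pi}^*$ for all $t\ge 0$. Thus $\hat{\pi}^*$ is invariant for the strong Markov process $\Zbf(\cdot)$, and since \eqref{stabcon} holds, Theorem \ref{statdistthm} says $\Zbf$ is positive recurrent with unique invariant distribution $\hat{\pi}$; hence $\hat{\pi}^*=\hat{\pi}$. As every subsequential limit of the tight family $\{\hat{\pi}^n\}$ equals $\hat{\pi}$, we conclude $\hat{\pi}^n\Rightarrow\hat{\pi}$.

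The main obstacle is steps (i)--(ii): constructing the Lyapunov function $V$ and checking the $n$-uniform drift inequality. The delicate point is exactly the idle-server boundary $\{x_1=0\}$, on which the total-jobs test function has positive drift of order $n$ because some servers are not working. These states are invisible at the process level of Theorem \ref{convtodiff} — they contribute only the reflection local time $L_1$ at $\{z_1=0\}$ — but to obtain the uniform stationary moment bound they must be neutralized by hand through the correction term $G$. Step (iii), by contrast, is routine once Theorems \ref{convtodiff} and \ref{statdistthm} are in hand.
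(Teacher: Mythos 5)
Your step (iii) — the subsequence/invariance argument built on Theorem \ref{convtodiff} and the uniqueness statement in Theorem \ref{statdistthm} — is correct and is essentially the route the paper takes to conclude from the tightness of $\{\hat\pi^n\}$. Your steps (i)--(ii), by contrast, take a genuinely different route from the paper, and the key construction you propose has a concrete gap.

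\textbf{The gap.} With $\Phi(w)=w^2$ you correctly compute $\Lmc^n\Phi(W)(x)=2W(x)\,\Lmc^n W(x)+O(Kn)$, and $\Lmc^n W(x)=n(K-m(x))-\upp\sqrt n$. But on the set $\{x_1=0\}$ one has $K-m(x)\ge 1$, so $\Lmc^n\Phi(W)(x)\ge 2W(x)(n-\upp\sqrt n)+O(Kn)$, which is of order $n\,W(x)$ and not, as you assert, of order $n$. Since $x$ is the ordered queue-length vector, $x_1=0$ places no restriction on $W(x)=\sum_i x_i$ (the upper ranked queues may be arbitrarily long), so this positive drift is unbounded. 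A bounded corrector $G$ has $|\Lmc^n G(x)|\le(\text{total jump rate})\cdot 2\|G\|_\infty=O(n)$, and therefore cannot cancel a boundary drift that grows linearly in $W(x)$. Your heuristic that the $W$-increase over one excursion at $\{x_1=0\}$ is $O(1)$ is correct for the linear functional $W$, but the corresponding increase in $W^2$ is $2W\cdot O(1)=O(W)$, not $O(1)$, which is precisely why the corrector cannot remain bounded. If one instead allows $G$ to grow like $W(x)$, then $\Lmc^n G$ itself acquires $O(nW)$ contributions from the $x_1\colon 0\leftrightarrow 1$ crossings whose sign on $\{x_1\ge 1\}$ must then be controlled, and this cancellation is exactly the delicacy the sketch postpones; it is not at all clear the construction closes. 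In short, the pointwise Foster--Lyapunov inequality $\Lmc^n V(x)\le -c\sqrt n\,|x|+C\mathbf{1}_{K_n}(x)$ with $V=W^2+G$, $G$ bounded, does not hold.

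\textbf{Contrast with the paper.} The paper avoids the generator boundary issue entirely by working at the level of the diffusion-scaled process and the Skorokhod map. In Lemma \ref{lyaplem1} the representation $\hat\Zbf^n=\Gamma_2(\hat\Zbf^n(0)+\rho\,\id+A\tilde\Mbf^n+\tilde{\cle}^n)$ together with the Lipschitz property of $\Gamma$ and the fact that the deterministic fluid path $\Gamma_2(\bfz+\rho\,\id)$ hits $0$ by time $t_0|\bfz|$ (a consequence of $\clr^{-1}\rho<0$) yields the $n$-uniform contraction estimate $\EE_{\bfz}|\hat\Zbf^n(|\bfz|t)|^2\le D_3\left(t|\bfz|+\frac{1+t^2(|\bfz|^2+1)}{n}\right)$ for $t\ge t_0$. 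This path-level estimate then drives a Dai--Meyn style Lyapunov function $V_n(\bfz)=\EE_{\bfz}\int_0^{\tau_C(\bar\delta)}(1+|\hat\Zbf^n(s)|)\,ds$ (Lemmas \ref{lyaplem2} and \ref{statdistlem}), from which both existence/uniqueness of $\hat\pi^n$ for $n\ge n_0$ and the uniform bound $\int_{G^n}(1+|\bfz|)\,d\hat\pi^n(\bfz)\le\bar\kappa$ follow. The reflection term, which is the source of all the trouble in your generator computation, is absorbed automatically by the Lipschitz property of $\Gamma$, so no explicit boundary corrector is needed. If you wish to salvage a direct generator approach you would have to design an \emph{unbounded} corrector whose drift is $O(-nW)$ on $\{x_1=0\}$ without spoiling the negative drift elsewhere, which is substantially harder than your sketch suggests.
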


\subsection{Load balancing performance of MJSQ}\label{mjsqrem} 

In this section  we compare the performance of the MJSQ, PQS and JSQ policies.

\noindent
(i) \textbf{MJSQ v/s PQS: }
For $a, b >0$, $K \ge 2$ and $d \in [K]$, consider the model \eqref{ordqs} with choice of parameters  $a_i = a - \frac{b}{d}$ for $i = 1, ..., d$ and $a_i = a$ for $i = d+1,...,K$. On comparing routing probabilities, it follows that this system corresponds to a load sharing scheme in which, of the total arrival rate of $nK - \sqrt{n}Ka + \sqrt{n}b$ to the system, 
$$ \frac{K - Ka/\sqrt{n}}{K - Ka/\sqrt{n} + b/\sqrt{n}}, \; \mbox{ and (resp.)} \; \frac{b/\sqrt{n}}{K - Ka/\sqrt{n} + b/\sqrt{n}}$$
 fraction of jobs are routed at random, and (resp.) routed uniformly at random to the shortest $d$ queues in the system.
The case $d=1$ is the MJSQ policy while the case $d=K$ is the PQS policy discussed in the Introduction.
With the above choice of the parameters $a_i$, the drift vector $\boldsymbol{\rho}$ of the limiting diffusion $\Zbf(\cdot)$ takes the form
\begin{align*}
    \rho_1 = -a + \frac{b}{d},\;\; \rho_{d+1} = -\frac{b}{d}, \;  
    \rho_i = 0, \; i \in [K] \setminus \{1,d+1\}.
\end{align*}
It can be verified that the stability condition \eqref{stabcon} is satisfied if and only if $b \in (0,aK)$. 

Note that with the above choice of $\{a_i\}$, $\upp = aK-b$.
From Theorem \ref{statdistthm} the stationary density  of the reflected diffusion $\Zbf(\cdot)$ takes the form in \eqref{expstatdist} where the vector $\eta$ in the exponent
is given as
\begin{equation}\label{etad}
   \eta_i = -(K-i+1)a + \frac{(d-i+1)(aK-\upp)}{d}\mathbf{1}_{\{i \le d\}} ,\,\,\,\,\,\,\,\,\, i \in [K].
\end{equation}
Denote the probability distribution with the  density given by \eqref{expstatdist} with the above expression of $\eta$ as $\hat \pi^d$.
Consider the total (limiting rescaled) queue-length process $W^d(\cdot) \doteq \sum_{i=1}^K X_i(\cdot)$, where $X_i(\cdot) \doteq X_{i-1}(\cdot) + Z_i(\cdot)$, for $i \in [K]$, with $X_0(\cdot) \doteq 0$ by convention. Note that this process gives the total {\cb queue-length } in the system over time. Also define the {\em system imbalance index}  $D^d(\cdot) \doteq X_K(\cdot) -X_1(\cdot) = \sum_{i=2}^{K}Z^d_i(\cdot)$ which gives the difference between the largest and the smallest queue lengths over time. It then follows  that
\begin{align*}
    \mathbb{E}_{\hat\pi^d}W^d(0) &= \frac{K-d}{a} + \sum_{j=1}^d \frac{(K-j+1)d}{(K-j+1)ad - (d-j+1)(aK-\upp)}, \\
    \mathbb{E}_{\hat\pi^d}D^d(0) &= \sum_{j=2}^d \frac{d}{(K-j+1)ad - (d-j+1)(aK-\upp)} + \sum_{j=d+1}^K \frac{1}{(K-j+1)a},
\end{align*}
where $\mathbb{E}_{\hat\pi^d}$ is the expectation under the stationary measure, namely under the probability measure $\mathbb{P}_{\hat\pi^d}$ on $(\Omega, \mathscr{F})$ such that $\mathbb{P}_{\hat\pi^d}\{\mathbf{Z}(0) \in A\} = \hat\pi^d(A)$ for $A \in \mathcal{B}(\mathbb{R}_+^K)$, and the appropriate sums above are taken to be zero if $d=1$ or $d=K$. Now define the ratios 
$$R_D \doteq \mathbb{E}_{\hat\pi^K}D^K(0)/\mathbb{E}_{\hat\pi^1} D^1(0), \; R_W \doteq \mathbb{E}_{\hat\pi^K}W^K(0)/\mathbb{E}_{\hat\pi^1} W^1(0).$$ 
The quantity $R_D$ quantifies the load imbalance in PQS in comparison to the MJSQ system while the quantity
$R_W$ measures the ratio between the limiting steady-state total {\cb queue-lengths} in PQS and MJSQ systems.
It can be checked that
\begin{align}
	\mathbb{E}_{\hat\pi^K}W^K(0) &= \frac{K^2}{\upp}, \ \ \mathbb{E}_{\hat\pi^1} W^1(0) = \frac{K-1}{a} + \frac{K}{\upp} ,\\
	\mathbb{E}_{\hat\pi^K}D^K(0) &= \frac{1}{\upp}\sum_{j=2}^K\frac{K}{K-j+1}, \ \ \mathbb{E}_{\hat\pi^1} D^1(0) = \frac{1}{a}\sum_{j=2}^K\frac{1}{K-j+1}.\label{eq:409n}
\end{align}
Thus,
\begin{align*}
    R_W & = \frac{K^2a}{(K-1)\upp + Ka} \approx \frac{Ka}{\upp + a},\\
    R_D & = \frac{1}{\upp}\sum_{j=2}^K\frac{K}{K-j+1}/\frac{1}{a}\sum_{j=2}^K\frac{1}{K-j+1} = \frac{Ka}{\upp}.
\end{align*}
These results say in particular that when $K$ is large, then in comparison with the standard PQS, the MJSQ system achieves (in the heavy traffic limit) a reduction in steady state mean total system  {\cb total queue-length} by approximately a factor of $Ka/(\upp + a)$. Moreover, in the steady state heavy traffic limit, the average system imbalance index in the MJSQ model is once more reduced by a factor of $Ka/\upp$ in comparison to the standard PQS. These results give quantitative measures of load balancing improvements achieved by the MJSQ policy over the PQS.
\\

\noindent (ii) \textbf{MJSQ v/s JSQ: }
Comparing the MJSQ policy with the JSQ policy, it is intuitively clear that the performance of the latter should be better. Moreover, by \cite[Theorem 5(b)]{chen2012asymptotic}, the JSQ policy is \emph{asymptotically optimal} in the sense that the average queue-length process (scaled by $\sqrt{n}$) is stochastically minimized in a certain sense as $n \rightarrow \infty$ amongst all feasible routing policies with $K$ servers. However, surprisingly, from \cite[Table 1]{chen2012asymptotic}, if we consider a JSQ policy with $K$ servers and the same net arrival rate $nK - \upp\sqrt{n}$ and service rate $n$ for each server as for the MJSQ policy discussed above, then the expected steady state limiting average queue-length (equivalently, average workload) for JSQ (as $n \rightarrow \infty$) is $1/\upp$ compared to $(K-1)/(Ka) + 1/\upp$ for the MJSQ policy obtained above. Thus, for large $n$, \emph{the average JSQ queue-length is smaller only by a constant factor $\approx (1+ \upp/a)$ than that of MJSQ, compared to a factor $K$ improvement of JSQ over PQS}. Moreover, for fixed $\upp$, as $a$ increases, $(1+ \upp/a)$ approaches one.
Also, from the state space collapse results of \cite{chen2012asymptotic} it follows that for the JSQ policy the expected system imbalance index (i.e. the quantity analogous to \eqref{eq:409n}) is zero. Thus from \eqref{eq:409n} we see that the price, in terms of the system imbalance, in going from JSQ to MJSQ is approximately $\log K/a$, which approaches $0$ as $a \to \infty$.
 \emph{ Thus the MJSQ policy becomes asymptotically optimal as $a \rightarrow \infty$}, in the sense that its performance, as measured by the steady state average queue-length and system imbalance index, approaches that of the JSQ policy.
   Thus, for large $K$, by implementing MJSQ, we achieve a \emph{significant performance improvement over PQS (comparable to that of JSQ) at approximately $1/\sqrt{n}$  of the communication cost incurred in implementing JSQ}. 
\\

\subsection{Overloaded System: Long Time Behavior}
\label{unstabsec}

Finally, we return to the subset of  parameter space considered in Corollary \ref{cor:spcas}: namely, where $a_1 = a-b$ and $a_i=a$ for $i = 2, \ldots, K$.
From Remark \ref{rem:atl} we see that in this case the unconstrained version $\bfU$ of $\bfY$ corresponds to the Atlas model where the lowest particle gets the drift $-a+b$ whereas the remaining particles get the drift $-a$.
We will consider the case where $a \ge 0$ and $b>aK$.
Note that in this case the necessary and sufficient condition for stability given in Corollary \ref{cor:spcas} fails to hold and therefore $ \bfZ$ is not positive recurrent. Similarly it can be checked that, in this case, $\bfZ^n$, and therefore the queue-length process $\bfQ^n$, is not stable for  large $n$. Nevertheless, it is of interest to investigate if the MJSQ policy has any quantifiable load balancing properties. 
  %
 %
 {\cb Writing $\hat\bfZ^n_c = (\hat Z^n_2, ..., \hat Z^n_K)'$},
 our final result says that in this regime (namely when $a\ge 0$ and $b>aK$)  the gap process $\hat \bfZ^n_c$ is stable for $n$ sufficently large. Specifically, although the full system $\hat \Zbf^n$ is unstable in this case and therefore does not have a stationary distribution, the result given below shows that $\hat \bfZ^n_c(t)$ does converge in distribution as $t \to \infty$
 and the limit, as $n\to \infty$, of these limiting distributions, takes an explicit product form. In other words, despite the system instablity, the discrepancy between the queue-length processes remains tight over time, demonstrating a novel form of `load balancing'.
 %
%
\begin{theorem}\label{unstabstatdist}
Fix $K \ge 2$. Suppose that for some $a \ge 0$ and $b>aK$,
$a_1 = a-b$ and $a_i=a$ for $i = 2, \ldots, K$. Then the following conclusions hold,

\begin{enumerate}
    \item[i.] There exists a $n_0 \in \NN$ and, for all $n \geq n_0$, $\RR^{K-1}_+$ valued random variables $\hat\Zbf^n_c(\infty)$ such that for each {\cb $m \ge n_0$, $\hat\Zbf^m_c(t) \Rightarrow \hat\Zbf^m_c(\infty)$ as $t\to \infty$.}
    \item[ii.] As $n \rightarrow \infty$, $\hat \Zbf_c^n(\infty) \Rightarrow \Zbf_c(\infty) = (Z_2(\infty),\dots, Z_K(\infty))'$, where 
    \begin{equation}\label{zcstat}
    \Zbf_c(\infty) \sim \bigotimes_{i=1}^{K-1}\textnormal{Exp}\left((\frac{K-i}{K})b\right).
    \end{equation}
\end{enumerate}

\end{theorem}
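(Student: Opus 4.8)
The plan is to exploit the same structure used in Theorem \ref{statdistconv}, but now applied to the gap coordinates $2,\dots,K$ only, since the lowest coordinate $\hat Z^n_1$ is transient in this overloaded regime. The heuristic is that when $b > aK$ the shortest queue grows without bound, so after a finite random time the indicator $\mathbf{1}_{\{X^n_1(s) < X^n_2(s)\}}$ controlling arrivals to rank $2$ is, effectively, always $1$ (and $\mathbf{1}_{\{X^n_1 = X^n_2\}}=0$), so that the upper $K-1$ gap coordinates evolve as the gap process of a $(K-1)$-dimensional normally reflected system (in continuous time, of a prelimit version of the unconstrained Atlas dynamics). Concretely, for part (i) I would first argue that $\hat Z^n_1(t) \to \infty$ a.s. (or in probability) as $t\to\infty$ for $n$ large: at the prelimit level, the labeled/ranked queue system with these parameters has the shortest queue receiving net positive drift $\approx (b - aK)\sqrt n / \ldots$ relative to departures, so a Lyapunov/ fluid argument (or a direct comparison with a random walk) gives transience of $X^n_1$ and hence of $\hat Z^n_1$. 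Then, conditionally on the event that $X^n_1$ has separated from $X^n_2$ and stays separated, the process $\mathbf{X}^n_c \doteq (X^n_2,\dots,X^n_K)$ is itself a Markov jump process whose generator, after the separation time, does not depend on $X^n_1$; it is precisely the ranked-queue chain for a $(K-1)$-server system with arrival rates $n - a\sqrt n$ to each (no preferential stream, since the $b\sqrt n$ stream now always hits the lowest of all $K$ queues, i.e.\ $X^n_1$). This $(K-1)$-server chain is positive recurrent in the gap coordinates; one shows $\hat{\mathbf Z}^n_c(t)$ converges in distribution by combining the a.s.\ separation with the ergodicity of the post-separation chain's gap process, using a coupling to remove the dependence on the (transient) initial excursions.

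For part (ii), the plan is to pass to the diffusion limit of the post-separation $(K-1)$-server gap chain. By Theorem \ref{convtodiff} applied to a $(K-1)$-server system with all $\ta_i = a$ — but with the crucial modification that there is no reflection at the bottom because the lowest of these $K-1$ queues, namely $X^n_2$, never actually touches $X^n_1$ in the limit (its effective lower constraint has escaped to $+\infty$) — the scaled gaps converge to the $(K-1)$-dimensional process which is the gap process of the unconstrained Atlas-type model with common drift $-a$. Equivalently, one should realize $\Zbf_c(\infty)$ as the stationary distribution of the $(K-1)$-particle Atlas gap process: from the known product-form stationary law of the Atlas model (e.g.\ \cite{Atlas1,Atlas2,harwil2}), or directly from the computation underlying Theorem \ref{statdistthm} with the reflection term in the first coordinate removed, the stationary gap law is $\bigotimes_{i=1}^{K-1}\mathrm{Exp}(\lambda_i)$ with $\lambda_i$ read off from the exponent vector $\eta$; with common drift $-a$ and $j$-th gap (from the bottom of the $(K-1)$-system) the rate is proportional to the number of particles above it. A short check: using the gap-coordinate version of \eqref{etai}-type identities for the $(K-1)$-particle Atlas model, the $i$-th gap gets rate $\tfrac{K-i}{K}b$ — note that $\sum_{i}$ of the $\ta$'s beyond rank $i$ telescopes, and the constant $a$ cancels the way it does in \eqref{etad}, leaving only the $b$ contribution scaled by the fraction $(K-i)/K$. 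I would present this as: identify the limiting diffusion via Theorem \ref{convtodiff} (dropping the $L_1$ reflection term because the bottom gap $Z_1$ is replaced by a transient coordinate), invoke the positive recurrence and product-form stationarity of its gap process from Theorem \ref{statdistthm} (applied to the $K-1$ upper coordinates with the understanding that the effective lowest-rank stream of magnitude $b$ is shared), and finally interchange the $t\to\infty$ and $n\to\infty$ limits exactly as in Theorem \ref{statdistconv}, using tightness of $\{\hat{\mathbf Z}^n_c(\infty)\}_n$ and uniqueness of the limiting stationary law.

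The main obstacle I anticipate is making the "separation of $X^n_1$ from $X^n_2$" rigorous and uniform enough to justify the interchange of limits: one needs not just that $X^n_1 - X^n_2 \to -\infty$ eventually, but quantitative control — e.g.\ that the time spent with $X^n_1 = X^n_2$ after a large time is negligible, uniformly in $n$ large, and that the post-separation gap chain reaches its stationary distribution before $n\to\infty$ effects wash out. This is exactly the kind of estimate handled by a Foster–Lyapunov argument for the $(K-1)$-dimensional gap chain combined with a transience estimate (drift and uniform-integrability bounds) for $\hat Z^n_1$; I expect the proof to set up a Lyapunov function of the form $V(\mathbf z) = \sum_{i=2}^K \theta_i z_i$ (or an exponential variant) for the gap coordinates, establish a negative-drift inequality valid once $z_1$ is large, and then combine this with a separate supermartingale argument showing $\hat Z^n_1$ drifts to $+\infty$ and spends asymptotically no time at $0$ — steps analogous to those already used to prove Theorems \ref{statdistconv} and the stability half of Theorem \ref{statdistthm}. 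Once these estimates are in hand, parts (i) and (ii) follow by the same weak-convergence-plus-tightness machinery as Theorem \ref{statdistconv}, with the product-form limit identified through the Atlas-model connection of Remark \ref{rem:atl}.
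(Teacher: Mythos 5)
Your high-level plan -- establish transience of the lowest coordinate, then reduce to the gap dynamics of an unconstrained Atlas-type system, then interchange limits -- is aligned with the paper's strategy (the paper proves a linear-growth result for $\hat Z^n_1$ via an almost-sure LLN for the martingale/error parts and a deterministic Skorokhod-map computation, Lemmas~\ref{lem:skorprop}--\ref{lem:staypos}; it then compares $\hat\Zbf^n_c$ to an explicit auxiliary Markov chain). However, there are genuine conceptual gaps in how you set up the reduction.

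First, the conditioning event you want to use is wrong. You say ``conditionally on the event that $X^n_1$ has separated from $X^n_2$ and stays separated, the process $(X^n_2,\dots,X^n_K)$ is a Markov jump process... a $(K-1)$-server system with arrival rates $n-a\sqrt n$ to each.'' But the correct ``separation'' that eventually occurs is $X^n_1>0$ for all large $t$ (the shortest queue escapes the boundary), \emph{not} $X^n_1<X^n_2$ for all large $t$. The latter event fails: ties between the two lowest ranks recur forever, exactly as $Z_2$ returns to $0$ infinitely often in the limiting Atlas gap process. Because these ties recur, $(X^n_2,\dots,X^n_K)$ is not Markov: the arrival and departure rates at $X^n_2$ depend on whether $X^n_1=X^n_2$, which is not determined by $(X^n_2,\dots,X^n_K)$. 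The paper instead makes the $(K-1)$-dimensional \emph{gap} vector $(Z^n_2,\dots,Z^n_K)$ Markov after the time the shortest queue stays away from zero, by introducing the auxiliary process $\hat\Ubf^n$ whose dynamics are those of the gaps of the full $K$-particle ranked system with the zero-reflection removed (equation~\eqref{eq:722}), and then a swap-at-time-$m$ coupling $\hat\Ubf^{n,m}$ that agrees with $\hat\Zbf^n_c$ on $\{t_0\le m\}$.

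Second, this mis-identification propagates into your limiting object and drift. You describe the post-separation diffusion as that of ``a $(K-1)$-server system with all $\ta_i=a$'' with ``common drift $-a$'', so no $b$ appears. That model has gap drift $\mathbf{0}$ and a null-recurrent gap process, which cannot produce the claimed $\mathrm{Exp}((K-i)b/K)$ limit; your later telescoping check that ``the $a$ cancels, leaving only the $b$ contribution'' contradicts the setup you proposed. The correct auxiliary diffusion is the gap process of the $K$-particle \emph{unconstrained} rank-based model where the lowest ranked particle keeps drift $-(a-b)$ and the rest get $-a$; its gap vector $(U_2,\dots,U_K)$ is a $(K-1)$-dimensional reflected diffusion with reflection matrix $\bar R$ (the lower-right $(K-1)\times(K-1)$ submatrix of $\mathscr R$) and drift $\bar{\boldsymbol\rho}=(-b,0,\dots,0)'$, and the product-form rates $(K-i)b/K$ come from $\bar R^{-1}\bar{\boldsymbol\rho}$, not from a system with common drift $-a$. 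Also note that a ``$(K-1)$-particle Atlas gap process'' is $(K-2)$-dimensional and cannot match $\Zbf_c(\infty)\in\RR^{K-1}_+$; what is needed is the $(K-1)$-dimensional gap process of a $K$-particle system. Finally, the coupling that transfers the ergodic limit from $\hat\Ubf^n$ to $\hat\Zbf^n_c$ is only gestured at in your proposal; it is the piece that turns the almost-sure escape of $\hat Z^n_1$ into the distributional convergence in (i), and it requires an explicit construction (the $\hat\Ubf^{n,m}$ processes and the bound $|\PP(\hat\Zbf^n_c(t+m)=\bfz)-\PP(\hat\Ubf^{n,m}(t)=\bfz)|\le 2\PP(t_0>m)$), not just a Foster--Lyapunov argument.
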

Note that the right side of \eqref{zcstat} is exactly the stationary distribution of the gap processes for a $K$ particle standard Atlas model where the lowest particle gets the drift $b/2$ (or $(-a+b)/2$) and remaining particles get the drift $0$ (resp. $-a/2$). This is natural to expect since in the regime of Theorem \ref{unstabstatdist} the lowest particle eventually escapes the origin and so the constrained diffusion behaves exactly as the standard Atlas model after a (random) finite period of time. 
We remark that in the case $a \in (0,\infty)$ and $b< aK$, the first part of Theorem \ref{unstabstatdist} can be shown to hold in exactly the same manner. However, in this case the  distributional limit of $\Zbf^n_c(\infty)$ is given by the (appropriate marginal distribution of the) probability measure $\hat \pi^*$ in Corollary \ref{cor:spcas} which is different from the right side of \eqref{zcstat}. In particular in the case $0<a <b <aK$, although the drift given to the lowest particle, namely $-a+b$, is strictly positive, it is not enough to overcome the influence of the higher particles to escape the origin eventually and as a consequence the boundary plays an important role in determining the time asymptotic distribution of $\Zbf_c^n$ and of its large $n$ process limit $\Zbf_c$. 

\begin{remark}\label{unload}
Observe that the expected long-time difference between the  maximum and minimum processes $\sum_{i=2}^KZ_i(\infty)$ in the unstable regime of Theorem \ref{unstabstatdist} is
\begin{equation}
    D_{unst} \doteq \mathbb{E}\left(\sum_{i=2}^KZ_i(\infty)\right) = \sum_{i=1}^{K-1}\frac{K}{b(K-i)}. \label{eq:424n}
\end{equation}
In comparison, from Corollary \ref{cor:spcas}, the same quantity in the stable regime when $a > 0$ and $b \in (-\infty, aK)$ takes the form
\begin{equation*}
    D_{stab} \doteq \frac{1}{a}\sum_{i=1}^{K-1}\frac{1}{K-i}.
\end{equation*}
From this, we can see that, for fixed $a,K$, as $b$ grows, the steady state system imbalance remains constant ($= D_{stab}$) when $b$ is less than $aK$. As $b$ grows above $aK$ (that is, we switch from the stable to the unstable regime), the system imbalance decays like $1/b$ and hence, \emph{although the whole system is unstable, load is more equally balanced.}

Also note that for the PQS policy defined as in Section \ref{mjsqrem} (for $d=K$), the $K$ queue-length processes behave as independent $M/M/1$ queues with arrival rate $n + (bK^{-1} - a)\sqrt{n}$ and departure rate $n$. Thus, when $b > aK$, each queue-length process is transient and it follows from their mutual independence that the difference between their maximum and minimum (system imbalance index) is not tight in time and the quantity analogous to that in  \eqref{eq:424n} is $\infty$ in this case.
In this sense, although the whole system is not stable in both the PQS and MJSQ policy for the range of parameters in Theorem \ref{unstabstatdist}, \emph{the latter still `balances load' by keeping the discrepancies between queue-lengths stable in time}.
\end{remark}

{\cb We now summarize the quantitative comparisons of long time behavior (asymptotically as $n \rightarrow \infty$) given above for MJSQ, JSQ and PQS in table form.

\begin{center}
\begin{tabular}
{ |c| c | c | c | c | } 

  \hline
  \, & Avg. queue-length &  Syst. imb. (stable) & Syst. imb. (unstable) & Approx. commun. cost\\ 
  \hline
  \textbf{MJSQ}& $\frac{K-1}{Ka}+\frac{1}{v}$ & $\frac{1}{a}\sum_{i=1}^{K-1}\frac{1}{K-i}$ & $\sum_{i=1}^{K-1}\frac{K}{b(K-i)}$ & $b\sqrt{n}K$\\ 
  \hline
  \textbf{PQS} & $\frac{K}{v}$ & $\frac{K}{v}\sum_{i=1}^{K-1}\frac{1}{K-i}$ & $\infty$ & $\approx 0$ \\ 
  \hline
  \textbf{JSQ} & $\frac{1}{v}$ & 0 & 0 & $nK^2$ \\ 
  \hline
\end{tabular}
\end{center}

} 

Lastly, we note that although the case $b=aK$ is not investigated here, we believe that the average long time behavior of the corresponding MJSQ system should resemble the unstable case (Theorem \ref{unstabstatdist}) where $b>aK$. This is because, although the lowest queue will become empty infinitely often, the time spent at zero will be small and thus the average boundary effect of the origin will diminish over time.

\subsection{Organization}\label{organization}

The rest of this article is organized as follows. In Section \ref{secheavtraf}, we prove Theorem \ref{convtodiff}. In Section \ref{seczstat}, we establish Theorem \ref{statdistthm}. In Section \ref{secconvstat}, we give the proof of Theorem \ref{statdistconv}. In Section \ref{secunstab}, we prove Theorem \ref{unstabstatdist}.

\section{Convergence in distribution of diffusion-scaled gap processes}\label{secheavtraf}

In this section we prove Theorem \ref{convtodiff} which says that
 the diffusion-scaled process $\hat{\Zbf}^n(\cdot)$ converges in distribution to a reflecting diffusion. To establish this result, we rewrite \eqref{ordqs} in terms of a Skorokhod  map applied to a martingale input with drift (and some additional error terms). We use a well-known functional central limit theorem for Poisson processes and some tightness results for the local times of various processes related to $\hat{\Zbf}^n(\cdot)$ to show that this input converges in distribution to a Brownian motion with drift. Finally, we exploit the continuity properties of the Skorokhod map to show that $\hat\Zbf^n(\cdot)$ converges in distribution to a reflecting Brownian motion with drift as given in the statement of Theorem \ref{convtodiff}.

 In the process, we will also establish an estimate for the second moment of the local time of $\hat{\mathbf{Z}}^n(\cdot)$ (Corollary \ref{pairwisecorr}) which will be used in Section \ref{secconvstat}.

We begin by establishing an important   tightness property.
\begin{lemma}\label{qtightprop}
	Let $Q^n_i(t)$ denote the queue-length of the $i$-th labeled queue at time $t$ when $Q^n_i(0) = X^n_i(0)$, $i \in [K]$.
	Suppose that the collection $\{\hat \bfZ^n(0), n \in \NN\}$ is tight.
Define $U^n_i(t) \doteq \sqrt{n}\int_0^{t} \mathbf{1}_{\{Q^n_i(s) = 0\}} ds$, $t\ge 0$.
Then  the collection $\{U^n_i, i \in [K], n \in \NN\}$ is tight in 
 $\mathcal{C}([0,\infty):\mathbb{R})$.
 Furthermore, there is a $D_0 \in (0, \infty)$ such that for all $T \in (0, \infty)$ and $n\in \NN$,
 $$ \max_{i \in [K]} \EE|U^n_i(T)|^2 \le D_0(1 + \EE(|\hat \bfZ^n(0)|^2) + T^2).$$
\end{lemma}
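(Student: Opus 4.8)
The plan is to reduce the statement, separately for each \emph{labeled} queue $i$, to a one-dimensional Skorokhod reflection problem in which $U^n_i$ is (up to the factor $n^{-1/2}$) the constraining term, and then to read off the two assertions from standard properties of the driving input together with the Lipschitz continuity and positive homogeneity of the one-dimensional reflection map.

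First I would set up the representation. Fix $i\in[K]$ and write $\bar a\doteq\max_{l\in[K]}|\ta_l|$. Each of the $K$ servers works at rate $n$, so departures from labeled queue $i$ form a point process with $\{\mathscr F_t\}$-intensity $n\mathbf{1}_{\{Q^n_i(t)>0\}}$, and hence (by the random time-change representation of point processes) $D^n_i(t)=\tilde D_i\big(n\int_0^t\mathbf{1}_{\{Q^n_i(s)>0\}}\,ds\big)$ for a rate-one Poisson process $\tilde D_i$. On the arrival side, a job entering the central buffer is sent to whichever labeled queue currently occupies the rank $U_m$; arguing with the lexicographic tie-breaking rule, labeled queue $i$ receives exactly the jobs routed to its current rank $r_i(\cdot-)$, so that its arrival stream $A^n_i$ has $\{\mathscr F_t\}$-intensity $\lambda^n_i(t)=(nK-\upp\sqrt n)(1-\ta_{r_i(t-)}n^{-1/2})/(K-\upp n^{-1/2})=n-\ta_{r_i(t-)}\sqrt n$; in particular $|\lambda^n_i(t)-n|\le\bar a\sqrt n$. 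Writing $M^A_i$ for the compensated arrival martingale (with $\langle M^A_i\rangle(t)=\int_0^t\lambda^n_i(s)\,ds$) and $\tilde M_i(u)\doteq\tilde D_i(u)-u$, the balance equation $Q^n_i=Q^n_i(0)+A^n_i-D^n_i$ rearranges into
\begin{equation*}
Q^n_i(t)=\psi^n_i(t)+n\!\int_0^t\mathbf{1}_{\{Q^n_i(s)=0\}}\,ds,\qquad
\psi^n_i(t)\doteq Q^n_i(0)+\int_0^t(\lambda^n_i(s)-n)\,ds+M^A_i(t)-\tilde M_i\!\Big(n\!\int_0^t\mathbf{1}_{\{Q^n_i(s)>0\}}\,ds\Big).
\end{equation*}
Since $Q^n_i\ge0$ and $n\int_0^{\cdot}\mathbf{1}_{\{Q^n_i(s)=0\}}\,ds$ is continuous, nondecreasing, vanishes at $0$ and increases only on $\{Q^n_i=0\}$, this identifies $\big(Q^n_i,\,n\int_0^{\cdot}\mathbf{1}_{\{Q^n_i=0\}}ds\big)$ as the solution of the one-dimensional Skorokhod problem for $\psi^n_i$; writing $\Psi(x)(t)\doteq\sup_{0\le s\le t}(-x(s))\vee0$ for the one-dimensional reflection map and using its positive homogeneity, we get $U^n_i=\sqrt n\int_0^{\cdot}\mathbf{1}_{\{Q^n_i=0\}}ds=\Psi(\hat\psi^n_i)$, where $\hat\psi^n_i\doteq n^{-1/2}\psi^n_i$ and $\hat\psi^n_i(0)=\hat X^n_i(0)=\sum_{j\le i}\hat Z^n_j(0)$, so $\EE|\hat\psi^n_i(0)|^2\le K\EE|\hat\bfZ^n(0)|^2$.

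For the moment bound I would use $|U^n_i(T)|\le|\hat\psi^n_i(0)|+\sup_{s\le T}|\hat\psi^n_i(s)-\hat\psi^n_i(0)|$ and estimate the increment term by term: the drift part $n^{-1/2}\int_0^{\cdot}(\lambda^n_i(s)-n)\,ds=-\int_0^{\cdot}\ta_{r_i(s)}\,ds$ is bounded by $\bar aT$ on $[0,T]$, and Doob's $L^2$-inequality applied to the two martingale terms --- using $\langle n^{-1/2}M^A_i\rangle(T)\le(1+\bar an^{-1/2})T$ and $\langle n^{-1/2}\tilde M_i(n\int_0^{\cdot}\mathbf{1}_{\{Q^n_i>0\}}ds)\rangle(T)=\int_0^T\mathbf{1}_{\{Q^n_i(s)>0\}}\,ds\le T$ --- bounds their suprema over $[0,T]$ in $L^2$ by constant multiples of $T$ (for $n\ge\bar a^2$). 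Squaring, taking expectations, and absorbing the linear term into $\tfrac12(1+T^2)$ gives $\max_{i\in[K]}\EE|U^n_i(T)|^2\le D_0(1+\EE|\hat\bfZ^n(0)|^2+T^2)$ with $D_0$ depending only on $K$ and $\bar a$. For the tightness claim, since $\Psi$ is Lipschitz on compact intervals it suffices to show $\{\hat\psi^n_i\}_n$ is $\mathcal C$-tight in $\mathcal D([0,\infty):\RR)$ and pass it through $\Psi$: the initial values are tight because $\{\hat\bfZ^n(0)\}$ is, the drift term is uniformly $\bar a$-Lipschitz hence equicontinuous, and each martingale term has predictable quadratic variation that is $\mathcal C$-tight ($\langle n^{-1/2}M^A_i\rangle(t)\to t$ uniformly on compacts; $\langle n^{-1/2}\tilde M_i(n\int_0^{\cdot}\mathbf{1}_{\{Q^n_i>0\}}ds)\rangle$ is $1$-Lipschitz) with maximal jump of size $n^{-1/2}\to0$, hence is $\mathcal C$-tight by the standard martingale tightness criterion; since all subsequential limits are continuous, $\{U^n_i\}_n$ is tight in $\mathcal C([0,\infty):\RR)$.

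The step I expect to require the most care is the first one: verifying, with the lexicographic tie-breaking, that labeled queue $i$ receives exactly the jobs routed to the rank it currently holds --- so that its arrival intensity is precisely $n-\ta_{r_i(t-)}\sqrt n$ and, crucially, its departure compensator is precisely $n\int_0^{\cdot}\mathbf{1}_{\{Q^n_i>0\}}ds$, which is what makes the Skorokhod structure appear --- and then stating the random time-change result cleanly enough that $\tilde M_i\big(n\int_0^{\cdot}\mathbf{1}_{\{Q^n_i>0\}}ds\big)$ is a genuine $\{\mathscr F_t\}$-martingale with the stated predictable quadratic variation, so that Doob's inequality is applicable. The remaining $L^2$ bookkeeping and the $\mathcal C$-tightness of the input are routine.
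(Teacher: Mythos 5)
Your proof is correct and follows essentially the same route as the paper: both identify $U^n_i$ as the one-dimensional Skorokhod regulator applied to a (compensated, time-changed Poisson/martingale) input for the $i$-th labeled queue, and both derive the $L^2$ bound and $\mathcal{C}$-tightness from the Lipschitz property of the reflection map together with quadratic-variation and Doob estimates (the paper phrases it via the $K$-dimensional normal-reflection map, noting it decouples into $K$ one-dimensional maps, whereas you work coordinate-by-coordinate from the start, but this is cosmetic). The step you flag as delicate — that the time-changed compensated Poisson process is a genuine $\{\mathscr{F}_t\}$-martingale with the stated bracket — is handled in the paper via Kurtz's time-changed Poisson representation and the optional sampling theorem, exactly as you anticipate.
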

\begin{proof}
Let $\{A_i, D_i, i \in [K]\}$ be a collection of $2K$ independent rate $1$ Poisson process, independent of $\{Q^n_i(0), n \in \NN, i \in [K]\}$.  Recall the ranking map $r_i$ for $i \in [K]$ defined in Section \ref{sec:notat}. Then the process $\{Q^n_i(\cdot), i \in [K]\}$ has the following distributionally equivalent representation: 
\begin{align}
\begin{split}\label{unordqs}
    Q^n_i(t) = Q^n_i(0) + A_i\left(\int_0^t \kappa^n_i(s)ds\right) - D_i\left(n\int_0^t \mathbf{1}_{\{Q^n_i(s) > 0\}}ds\right),\\
\end{split}    
\end{align}
where for $s \geq 0$, 
\begin{equation}\label{kappadrift}
    \kappa^n_i(s) \doteq \sum_{j=1}^K (n - a_j\sqrt{n}) \mathbf{1}_{\{r_j(\Qbf^n(s)) =i\}},
\end{equation}
with $\Qbf^n(s) =(Q^n_1(s), \ldots , Q^n_K(s))'$.
For $t \geq 0$, writing $\hat{\Qbf}^n(t) \doteq \frac{1}{\sqrt{n}}\Qbf^n(t)$, the above can be rewritten as
\begin{align*}
    \hat{Q}^n_i(t)  = \hat{Q}^n_i(0) + M_i^n(t) + \frac{1}{\sqrt{n}} \int_0^t \kappa^n_i(s) ds - \sqrt{n} \int_0^t \mathbf{1}_{\{Q^n_i(s) > 0\}}ds
\end{align*}
where
\begin{align*}
	M^n_i(t) &\doteq  M^n_{A,i}(t) - M^n_{D,i}(t), \\
    M^n_{A,i}(t) & \doteq \frac{A_i\left(\int_0^t \kappa^n_i(s)ds\right) - \int_0^t \kappa^n_i(s)ds}{\sqrt{n}}, \\
    M^n_{D,i}(t) & \doteq \frac{D_i\left(n\int_0^t \mathbf{1}_{\{Q^n_i(s) > 0\}}ds\right) - n\int_0^t \mathbf{1}_{\{Q_i(s) > 0\}}ds}{\sqrt{n}}.
\end{align*}
Note that
$$\frac{1}{\sqrt{n}}\kappa^n_i(s) = \sqrt{n}\sum_{j=1}^K \mathbf{1}_{\{r_j(\Qbf^n(s)) =i\}} - \sum_{j=1}^Ka_j\mathbf{1}_{\{r_j(\Qbf^n(s)) =i\}} = \sqrt{n} - \sum_{j=1}^Ka_j\mathbf{1}_{\{r_j(\Qbf^n(s)) =i\}}$$
and $\sqrt{n}\mathbf{1}_{\{Q^n_i(s) > 0\}} = \sqrt{n} - \sqrt{n}\mathbf{1}_{\{Q^n_i(s) = 0\}}$.
Thus
\begin{align}
\begin{split}\label{skoroq}
    \hat{Q}^n_i(t) & = \hat{Q}^n_i(0) + M_i^n(t) - \sum_{j = 1}^Ka_j\int_0^t \mathbf{1}_{\{r_j(\Qbf^n(s)) =i\}}ds + \sqrt{n}\int_0^t\mathbf{1}_{\{\hat{Q}^n_i(s) = 0\}}ds \\
    & = \Gamma^N_{2,i}(\mathscr{Q}^n(\cdot))(t),
\end{split}
\end{align}
where $\mathscr{Q}^n(t) = (\mathscr{Q}^n_1(t), \ldots \mathscr{Q}^n_K(t))'$,
$$\mathscr{Q}^n_i(t)  = \hat{Q}^n_i(0) + M_i^n(t) - \sum_{j = 1}^Ka_j\int_0^{t} \mathbf{1}_{\{r_j(\Qbf^n(s)) =i\}}ds, \, i \in [K],$$
$\Gamma^N = (\Gamma^N_1, \Gamma^N_2)$ is the Skorokhod map with normal reflection, i.e. with the reflection matrix $M=I$
and for $\phi\in \mathcal{D}_0([0,\infty):\mathbb{R}^K)$, $\Gamma^N_{2,i}$ is the $i$-th coordinate of $\Gamma^N_2(\phi)$ (see Proposition \ref{prop:skor}). Furthermore, note that
\begin{equation}\label{eq:equnit}
	U^n_i(t) = \Gamma^N_{1,i}(\mathscr{Q}^n(\cdot))(t), \; t \ge 0,\end{equation}
where $\Gamma^N_{1,i}$ is the $i$-th coordinate of $\Gamma^N_1$. Thus, in view of the Lipschitz property of $\Gamma^N$, to prove the stated tightness in the lemma, it suffices to show that
$\{\mathscr{Q}^n, n \in \NN\}$ is tight in $\mathcal{D}([0,\infty):\mathbb{R}^K)$.
Note that by assumption, $\{\hat \Qbf^n(0), n \in \NN\}$ is a tight collection of $\RR^K_+$ valued random variables.
Also, the collection
$$\{\sum_{j = 1}^Ka_j\int_0^{\cdot} \mathbf{1}_{\{r_j(\Qbf^n(s)) =i\}}ds, \; i \in [K], n \in \NN\}$$
is clearly tight in $\mathcal{C}([0,\infty):\mathbb{R})$. Thus to prove the lemma, it suffices to show that
\begin{equation}\label{eq:marts}
	\{M^n_{A,i}, M^n_{D,i}, i \in [K], n \in \NN\}\end{equation}
is a tight collection of $\mathcal{D}([0,\infty):\mathbb{R})$ valued random variables. 
Note that for $i \in [K]$, the predictable quadratic variation $\langle M^n_{A,i}\rangle$ of the martingale $M^n_{A,i}$ satisfies
\begin{equation}\label{eq:qva}
	\langle M^n_{A,i}\rangle_t = \frac{1}{n} \int_0^t \kappa^n_i(s) \le (1+ |a|_*)t, \mbox{ for all } t \ge 0,\end{equation}
where $|a|_*\doteq \max_{l \in [K]} |a_l|$.
Similarly, for $i \in [K]$, the predictable quadratic variation $\langle M^n_{D,i}\rangle$ of the martingale $M^n_{D,i}$
satisfies
\begin{equation}\label{eq:qvd}\langle M^n_{D,i}\rangle_t = \int_0^t \mathbf{1}_{\{\hat{Q}^n_i(s) > 0\}}ds \le t, \mbox{ for all } t \ge 0.\end{equation}
The tightness of the collection in \eqref{eq:marts} is now immediate from the optional sampling theorem for martingales and the classical Aldous-Kurtz tightness criteria \cite{Aldous78}. 

We now prove the second statement in the lemma. From \eqref{eq:equnit} and the Lipschitz property of the Skorokhod map we have that, for $i \in [K]$ and $T \in (0, \infty)$,
\begin{align*}
	\EE(U^n_i(T))^2 &\le \EE\left(\sup_{0\le t \le T} |\hat{Q}^n_i(0) + M_i^n(t) - \sum_{j = 1}^Ka_j\int_0^{t} \mathbf{1}_{\{r_j(\Qbf^n(s)) =i\}}ds|^2\right)\\
	&\le 3\EE|\hat{Q}^n_i(0)|^2 + 3 \EE\left(\sup_{0\le t \le T}|M_i^n(t)|^2 + (K|a|_*T)^2\right),
\end{align*}
where we have used the fact that since $\Gamma^N$ corresponds to the Skorokhod map for normal reflection, it reduces to a collection of $K$ one-dimensional Skorokhod maps.  {\cb We have also used here (and at several later instances) the inequality
 $(\sum_{i=1}^k x_i)^2 \le k \sum_{i=1}^kx_i^2$, for any $k \in \NN$, $x_i \in \mathbb{R}$, $1\le i \le k$}.
Also, from Doob's inequality and the quadratic variation estimates in \eqref{eq:qva} and \eqref{eq:qvd}
$$\EE\sup_{0\le t \le T}|M_i^n(t)|^2 \le 2\EE\sup_{0\le t \le T}|M_{A,i}^n(t)|^2 + 2\EE\sup_{0\le t \le T}|M_{D,i}^n(t)|^2 \le 8(2+ |a|_*)T.$$
The result follows on combining the above estimates.
\end{proof}

The following lemma, which will be crucial to the proofs of Theorem \ref{convtodiff} and Theorem \ref{statdistconv}, says that the Lebesgue measure of the time the  system of queues spends, before any fixed time $T > 0$, with any two queues tied in length, converges to zero in probability as the heavy traffic parameter $n \rightarrow \infty$.
\begin{lemma}\label{pairwiselemma}
	Let $\{Q^n_i(\cdot), i \in [K], n\in \NN\}$ be as in the statement of Lemma \ref{qtightprop}.
Then for $i,j \in [K]$, $i \neq j$, and $T \in (0,\infty)$, as $n \rightarrow \infty$,
\begin{equation*}
    \int_0^{T} \mathbf{1}_{\{Q^n_i(s) = Q^n_j(s)\}} ds  \rightarrow 0, \; \mbox{ in probability.}
\end{equation*}
Furthermore, there is a $D_1 \in (0, \infty)$, such that for all $n \in \NN$, $T \in (0, \infty)$
and $i,j \in [K]$, $i\neq j$,
$$\EE \left(\int_0^{T} \mathbf{1}_{\{Q^n_i(s) = Q^n_j(s)\}} ds\right)^2 \le \frac{D_1}{n}(1+\EE(|\hat \bfZ^n(0)|^2) + T^2).$$
\end{lemma}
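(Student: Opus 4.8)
The first assertion will be a consequence of the second: by Markov's inequality the second-moment bound gives $\PP\big(\int_0^T\mathbf 1_{\{Q^n_i(s)=Q^n_j(s)\}}ds>\eps\big)\le\eps^{-2}D_1 n^{-1}(1+\EE|\hat\bfZ^n(0)|^2+T^2)$, and when $\{\hat\bfZ^n(0)\}$ is merely tight (not $L^2$-bounded) one first truncates: given $\eta>0$ pick $R$ with $\sup_n\PP(|\hat\bfZ^n(0)|>R)<\eta$, run the queues from the modified initial gap vector $\hat\bfZ^n(0)\mathbf 1_{\{|\hat\bfZ^n(0)|\le R\}}$ (which agrees with the original system on an event of probability $\ge1-\eta$ and has a deterministically bounded initial condition), apply the second-moment bound to that system, and let $n\to\infty$ then $\eta\to0$. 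So it suffices to prove the displayed $L^2$ estimate, and I may assume $\EE|\hat\bfZ^n(0)|^2<\infty$. Fix $i\ne j$, put $W^n\doteq Q^n_i-Q^n_j$ and $\zeta^n(T)\doteq\int_0^T\mathbf 1_{\{W^n(s)=0\}}ds=\int_0^T\mathbf 1_{\{Q^n_i(s)=Q^n_j(s)\}}ds$. From \eqref{unordqs}, $W^n$ is a difference of time-changed independent rate-$1$ Poisson processes: it has jumps $\pm1$, total jump rate of order $n$ — and at least $n$ on $\{W^n(s)=0\}$ once $n$ is large — drift rate $O(\sqrt n)$, and, apart from that drift, its only finite-variation part is $\sqrt n\,(U^n_i-U^n_j)$, with $U^n_i$ as in Lemma~\ref{qtightprop}. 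Let $G^n(t)$ count the jumps of $W^n$ out of state $0$ on $[0,t]$; its compensator is $\int_0^t R^n(s)\mathbf 1_{\{W^n(s)=0\}}ds$ with $R^n(s)\ge n$ there, so $M^n_G(t)\doteq G^n(t)-\int_0^t R^n(s)\mathbf 1_{\{W^n(s)=0\}}ds$ is a martingale with $\EE[M^n_G(T)^2]=\EE[G^n(T)]$, and since $R^n(s)\mathbf 1_{\{W^n(s)=0\}}\ge n\mathbf 1_{\{W^n(s)=0\}}$,
\[ \zeta^n(T)\le\tfrac1n\int_0^T R^n(s)\mathbf 1_{\{W^n(s)=0\}}ds=\tfrac1n\big(G^n(T)-M^n_G(T)\big),\qquad \EE[\zeta^n(T)^2]\le\tfrac2{n^2}\big(\EE[G^n(T)^2]+\EE[G^n(T)]\big). \]
As $\EE[G^n(T)]\le(\EE[G^n(T)^2])^{1/2}$ is of lower order, everything reduces to showing $\EE[G^n(T)^2]\le C_K\,n\,(1+\EE|\hat\bfZ^n(0)|^2+T^2)$.

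\textbf{Bounding $\EE[G^n(T)^2]$ via Tanaka's formula.} I would pass to $\hat W^n\doteq W^n/\sqrt n$, living on $n^{-1/2}\mathbb{Z}$ with jumps $\pm n^{-1/2}$, and apply the Meyer--It\^o (Tanaka) formula to $(\hat W^n)^+$ and $(\hat W^n)^-$. Since $\hat W^n$ has no continuous martingale part, for $(\hat W^n)^+$ it reads
\[ (\hat W^n(t))^+=(\hat W^n(0))^+ + \int_0^t\mathbf 1_{\{\hat W^n(s-)>0\}}\,d\hat W^n(s)+\tfrac1{\sqrt n}\,G^{n,\uparrow}(t), \]
where a check of the jump-correction terms shows that exactly the upward jumps of $\hat W^n$ out of $0$ contribute to the jump sum, each contributing $n^{-1/2}$, and $G^{n,\uparrow}(t)$ counts these. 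I split $d\hat W^n$ into its martingale, drift and reflection parts. The drift integral is bounded in modulus by $|a|_*t$. The integral against the martingale part of $\hat W^n$ — call it $\mathcal N^n$ — is a martingale with $\langle\mathcal N^n\rangle_t$ of order $t$, by \eqref{eq:qva}--\eqref{eq:qvd}. The reflection integral is controlled using that $dU^n_i$ is carried by $\{\hat Q^n_i(s)=0\}$, where $\hat W^n(s)=-\hat Q^n_j(s)\le0$ so $\mathbf 1_{\{\hat W^n(s)>0\}}=0$, while $dU^n_j$ is carried by $\{\hat Q^n_j(s)=0\}$, where $\hat W^n(s)=\hat Q^n_i(s)\ge0$; hence $\big|\int_0^t\mathbf 1_{\{\hat W^n(s)>0\}}(dU^n_i-dU^n_j)\big|\le U^n_j(t)$. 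Rearranging,
\[ G^{n,\uparrow}(T)\le\sqrt n\Big(|\hat W^n(T)|+|\hat W^n(0)|+\sup_{0\le t\le T}|\mathcal N^n(t)|+|a|_*T+U^n_j(T)\Big), \]
and I take second moments: $\EE|\hat W^n(0)|^2\le C_K\,\EE|\hat\bfZ^n(0)|^2$ (since $\hat X^n_k(0)\le\sum_m\hat Z^n_m(0)$); $\EE|\hat W^n(T)|^2\le C_K(1+\EE|\hat\bfZ^n(0)|^2+T^2)$, from the Skorokhod representation \eqref{skoroq}, Doob's inequality and \eqref{eq:qva}--\eqref{eq:qvd} exactly as in the proof of Lemma~\ref{qtightprop}; $\EE[\sup_{[0,T]}|\mathcal N^n|^2]\le4\EE\langle\mathcal N^n\rangle_T\le CT$; and $\EE[U^n_j(T)^2]\le D_0(1+\EE|\hat\bfZ^n(0)|^2+T^2)$ by Lemma~\ref{qtightprop}. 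This gives $\EE[G^{n,\uparrow}(T)^2]\le C_K\,n\,(1+\EE|\hat\bfZ^n(0)|^2+T^2)$, and the identical argument applied to $(\hat W^n)^-$ (equivalently, to $-\hat W^n$ with $i$ and $j$ interchanged) does the same for $G^{n,\downarrow}(T)$, the count of downward jumps out of $0$. Since $G^n=G^{n,\uparrow}+G^{n,\downarrow}$, this combined with the reductions above proves the $L^2$ estimate for all $n$ sufficiently large; for the finitely many smaller $n$ the trivial bound $\zeta^n(T)\le T$ lets me enlarge $D_1$ accordingly.

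\textbf{The main obstacle.} The crux is the Tanaka step: one must show that the number of returns of $W^n$ to $0$ before time $T$ is $O(\sqrt n)$ in $L^2$ rather than the trivial $O(n)$ — this is exactly the improvement that turns the obvious $O(1)$ bound on $\zeta^n(T)$ into the claimed $O(n^{-1/2})$. The gain is extracted by recognizing this return count as $\sqrt n$ times the jump-local-time of $|\hat W^n|$ at $0$ appearing in Tanaka's formula and then dominating that local time by the remaining, manifestly $O(1)$, ingredients (the bounded-variation drift, an $L^2$-bounded martingale, the $L^2$-bounded reflection local times of Lemma~\ref{qtightprop}, and the position process, itself $L^2$-bounded via the Skorokhod map). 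The one delicate point is the sign of the reflection terms tested against $\mathrm{sgn}(\hat W^n)$, which uses that $Q^n_i$ and $Q^n_j$ cannot both be pushing at the origin while $\hat W^n$ maintains a strict sign; the jump-correction bookkeeping in the Meyer--It\^o formula and the moment estimates are otherwise routine.
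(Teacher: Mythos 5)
Your argument is correct and rests on the same central mechanism as the paper's: on $\{W^n=0\}$ the total jump rate of $W^n=Q^n_i-Q^n_j$ is at least $n$, so $\int_0^T\mathbf 1_{\{W^n=0\}}\,ds$ is of order $n^{-1}$ times a local-time-type quantity for $|\hat W^n|$ at $0$, and that quantity is then bounded in $L^2$ using Lemma~\ref{qtightprop} (for the reflection terms $U^n_i,U^n_j$) together with the quadratic-variation/Doob estimates for the compensated Poisson martingales. The technical execution differs: the paper builds, from scratch, a time-changed Poisson representation of $\hat Z^n_{i,j}=|\hat Q^n_i-\hat Q^n_j|$, isolates its absolutely continuous local-time term $\zeta^n_{i,j}(t)=n^{-1/2}\int_0^t v^{i,j}(s)\mathbf 1_{\{\hat Z^n_{i,j}(s)=0\}}ds$, and invokes the Lipschitz property of the one-dimensional Skorokhod map $\Gamma^1$ to control it; you instead work directly from the representations \eqref{skoroq} of $\hat Q^n_i$ and $\hat Q^n_j$, apply the Meyer--It\^o/Tanaka formula to $(\hat W^n)^\pm$, identify the resulting jump local time as $n^{-1/2}$ times the count $G^{n,\uparrow}$ (resp.\ $G^{n,\downarrow}$) of jumps out of $0$, and then add one compensator step relating the jump count $G^n$ to $\int_0^T R^n(s)\mathbf 1_{\{W^n=0\}}ds$ via the martingale $M^n_G$ with $\EE[M^n_G(T)^2]=\EE[G^n(T)]$. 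Your sign argument that $\mathbf 1_{\{\hat W^n>0\}}\,dU^n_i\equiv 0$ (since $dU^n_i$ lives on $\{\hat Q^n_i=0\}$, where $\hat W^n\le 0$) is correct and plays exactly the role of the paper's $\mu^{i,j}$ estimate \eqref{eq:muij}. The two routes are of comparable effort: your Tanaka computation avoids re-deriving the arrival/departure rates for $|\hat W^n|$, at the price of the jump-correction bookkeeping and the extra $G^n$--versus--compensator step. Your derivation of the in-probability statement from the $L^2$ bound via truncation of the initial data is also fine, though the paper gets it for free from tightness of $\{\mathscr Z^n_{i,j}\}$ in $\mathcal D$, which needs no uniform moment on $\hat\bfZ^n(0)$.
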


\begin{proof}
Let $i, j \in [K]$, $i\neq j$. For  $t \geq 0$, define $\hat{Z}_{i,j}^n(t) \doteq |\hat{Q}^n_i(t) - \hat{Q}^n_j(t)|$. Recall the definition \eqref{kappadrift} of $\kappa^n_l(\cdot)$, $l\in[K]$. Let $(A,D)$ be independent rate $1$ Poisson processes, independent of $\{Q^n_i(0), n \in \NN, i \in [K]\}$. Then (suppressing $n$ from the notation) the evolution of $\hat{Z}_{i,j}^n(t)$ has the following distributionally equivalent representation.
\begin{align*}
    \hat{Z}_{i,j}^n(t) & = \hat{Z}_{i,j}^n(0)+ \frac{1}{\sqrt{n}}A\left(\int_0^t\beta^{i,j}(s)ds\right) - \frac{1}{\sqrt{n}}D\left(\int_0^t\lambda^{i,j}(s)ds\right),
\end{align*}

where

\begin{align}
    \beta^{i,j}(s) &\doteq \kappa_i(s)\mathbf{1}_{\{Q_i(s) > Q_j(s) = 0\}} + (\kappa_i(s) + n)\mathbf{1}_{\{Q_i(s) > Q_j(s) > 0\}}\\
    & + \kappa_j(s)\mathbf{1}_{\{Q_j(s) > Q_i(s) = 0\}} + (\kappa_j(s)+n)\mathbf{1}_{\{Q_j(s) > Q_i(s) > 0\}} \nonumber\\
    & + (\kappa_i(s)+\kappa_j(s))\mathbf{1}_{\{Q_i(s) = Q_j(s) = 0\}} + (\kappa_i(s)+\kappa_j(s)+2n)\mathbf{1}_{\{Q_i(s) = Q_j(s) > 0\}}, \label{rev-one}\\
    \lambda^{i,j}(s) &\doteq (\kappa_j(s)+n)\mathbf{1}_{\{Q_i(s) > Q_j(s) = 0\}} + (\kappa_j(s) + n)\mathbf{1}_{\{Q_i(s) > Q_j(s) > 0\}}\nonumber\\
    & + (\kappa_i(s)+n)\mathbf{1}_{\{Q_j(s) > Q_i(s) = 0\}} + (\kappa_i(s)+n)\mathbf{1}_{\{Q_j(s) > Q_i(s) > 0\}}.\label{rev-two}
\end{align}
The terms $\beta^{i,j}$  can be understood as follows. On the set $\{Q_i(s-) > Q_j(s-) =0\}$, we have
$\hat{Z}_{i,j}^n(s) - \hat{Z}_{i,j}^n(s-) >0$ only if an arrival occurs to the $i$-th labeled queue which occurs at rate $\kappa_i(s)$.
Also, on the set $\{Q_i(s-) > Q_j(s-) >0\}$, $\hat{Z}_{i,j}^n(s) - \hat{Z}_{i,j}^n(s-) >0$  if either an arrival occurs  to the $i$-th labeled queue (which is at rate $\kappa_i(s)$) or a departure occurs from the $j$-th queue (which occurs at rate $n$). This explains the first two terms in the definition of $\beta^{i,j}(s)$. The next two terms are understood in a similar fashion on reversing the roles of $i$ and $j$.
Next, on the set $\{Q_i(s-) = Q_j(s-) =0\}$, we have $\hat{Z}_{i,j}^n(s) - \hat{Z}_{i,j}^n(s-) >0$ if either an arrival occurs to the $i$-th queue (rate $\kappa_i(s)$) or to the $j$-th queue (rate $\kappa_j(s)$). Similarly, on the set $\{Q_i(s-) = Q_j(s-) >0\}$ , we have $\hat{Z}_{i,j}^n(s) - \hat{Z}_{i,j}^n(s-) >0$ if either an arrival occurs to the $i$-th queue (rate $\kappa_i(s)$) or to the $j$-th queue (rate $\kappa_j(s)$), or a departure occurs from the $i$-th queue (rate $n$) or from the $j$-th queue (rate $n$). This explains the last two terms in the expression for 
$\beta^{i,j}$ .  The various terms in the definition of $ \lambda^{i,j}(s)$ can be understood in a similar fashion.

Hence, we may write
\begin{align*}
    \hat{Z}_{i,j}^n(t) & = \hat{Z}_{i,j}^n(0)+ M^n_{i,j}(t) + \frac{1}{\sqrt{n}}\int_0^t (\beta^{i,j}(s) - \lambda^{i,j}(s)) ds,
	\end{align*}
	where
\begin{align*}
	M^n_{i,j}(t) & \doteq M^n_{A,i,j}(t) - M^n_{D,i,j}(t), \\	
    M^n_{A,i,j}(t) & \doteq \frac{A\left(\int_0^t\beta^{i,j}(s)ds\right) - \int_0^t\beta^{i,j}(s)ds}{\sqrt{n}}, \;
    M^n_{D,i,j}(t)  \doteq \frac{D\left(\int_0^t\lambda^{i,j}(s)ds\right) - \int_0^t\lambda^{i,j}(s)ds}{\sqrt{n}}.
 \end{align*}  
{\cb 
Write 
 \begin{align*}
	 \beta^{i,j}(s) - \lambda^{i,j}(s)
	 = \mu^{i,j}(s) + v^{i,j}(s)
\end{align*}
where
\begin{align*}
\mu^{i,j}(s) &= 	\Big(\beta^{i,j}(s) - \lambda^{i,j}(s)\Big) - \Big((\kappa_i(s)+\kappa_j(s))\mathbf{1}_{\{Q_i(s) = Q_j(s) = 0\}}+ (\kappa_i(s)+\kappa_j(s)+2n)\mathbf{1}_{\{Q_i(s) = Q_j(s) > 0\}}\Big),\\
v^{i,j}(s) &= \Big((\kappa_i(s)+\kappa_j(s))\mathbf{1}_{\{Q_i(s) = Q_j(s) = 0\}}+ (\kappa_i(s)+\kappa_j(s)+2n)\mathbf{1}_{\{Q_i(s) = Q_j(s) > 0\}}\Big).
\end{align*}}
Then we can write
\begin{align}\label{zhatmgle}
	\hat{Z}_{i,j}^n(t) & = \hat{Z}_{i,j}^n(0)+ M^n_{i,j}(t) + \frac{1}{\sqrt{n}}\int_0^t \mu^{i,j}(s) ds + \frac{1}{\sqrt{n}}\int_0^t v^{i,j}(s) \mathbf{1}_{\{\hat{Z}_{i,j}^n(s)=0\}} ds\nonumber\\
	&= \Gamma_2^1(\mathscr{Z}^n_{i,j})(t),
\end{align}
where $\Gamma^1$ is the one-dimensional Skorokhod map on $\RR_+$ and 
$\mathscr{Z}^n_{i,j}(t) \doteq \hat{Z}_{i,j}^n(0)+ M^n_{i,j}(t) + \frac{1}{\sqrt{n}}\int_0^t\mu^{i,j}(s)ds$, for $t\ge 0$.

We now argue that the collection $\{\mathscr{Z}^n_{i,j}, n \in \NN\}$ is tight in $\mathcal{D}([0,\infty):\mathbb{R})$. Note that the collection $\{\mathscr{Z}^n_{i,j}(0), n \in \NN\}$ is tight by assumption.
Consider now the term $\frac{1}{\sqrt{n}}\int_0^{\cdot}\mu^{i,j}(s)(s)ds$.

{\cb Note that, from the definition of $\beta^{i,j}$ in \eqref{rev-one},
\begin{align*}
	&\beta^{i,j}(s) - \Big((\kappa_i(s)+\kappa_j(s))\mathbf{1}_{\{Q_i(s) = Q_j(s) = 0\}}+ (\kappa_i(s)+\kappa_j(s)+2n)\mathbf{1}_{\{Q_i(s) = Q_j(s) > 0\}}\Big)\\
	&= \kappa_i(s)\mathbf{1}_{\{Q_i(s) > Q_j(s) = 0\}} + (\kappa_i(s) + n)\mathbf{1}_{\{Q_i(s) > Q_j(s) > 0\}}\\
    &\quad + \kappa_j(s)\mathbf{1}_{\{Q_j(s) > Q_i(s) = 0\}} + (\kappa_j(s)+n)\mathbf{1}_{\{Q_j(s) > Q_i(s) > 0\}}.
\end{align*}
Thus, using the definition of $\lambda^{i,j}$ from \eqref{rev-two},
\begin{align*}
\mu^{i,j}(s) &= 	\kappa_i(s)\mathbf{1}_{\{Q_i(s) > Q_j(s) = 0\}} + (\kappa_i(s) + n)\mathbf{1}_{\{Q_i(s) > Q_j(s) > 0\}}\\
    &\quad + \kappa_j(s)\mathbf{1}_{\{Q_j(s) > Q_i(s) = 0\}} + (\kappa_j(s)+n)\mathbf{1}_{\{Q_j(s) > Q_i(s) > 0\}} - \lambda^{i,j}(s)\\
	&= (\kappa_i(s)- \kappa_j(s)-n)\mathbf{1}_{\{Q_i(s) > Q_j(s) = 0\}} + (\kappa_i(s)- \kappa_j(s))\mathbf{1}_{\{Q_i(s) > Q_j(s) > 0\}} \\
	&\quad + (\kappa_j(s)- \kappa_i(s)-n)\mathbf{1}_{\{Q_j(s) > Q_i(s) = 0\}} + (\kappa_j(s)- \kappa_i(s))\mathbf{1}_{\{Q_j(s) > Q_i(s) > 0\}}.
\end{align*}
It now follows that
\begin{align*}
    \frac{1}{\sqrt{n}}|\mu^{i,j}(s)| & = \frac{1}{\sqrt{n}}\Big|(\kappa_i(s) - \kappa_j(s)-n)\mathbf{1}_{\{Q_i(s) > Q_j(s) = 0\}} + (\kappa_i(s) - \kappa_j(s))\mathbf{1}_{\{Q_i(s) > Q_j(s) > 0\}}\\
    &\,\,\,\,\,\,\,\,\,\,\,\,\,\,\,\,\,\,\, - (\kappa_i(s) - \kappa_j(s)+n)\mathbf{1}_{\{Q_j(s) > Q_i(s) = 0\}} - (\kappa_i(s) - \kappa_j(s))\mathbf{1}_{\{Q_j(s) > Q_i(s) > 0\}}\Big|. 
\end{align*}
}
%
%
%
Define 
$$\Delta_{i,j}(\cdot) \doteq  \frac{1}{\sqrt{n}}(\kappa_i(\cdot)- \kappa_j(\cdot)) = \sum_{l=1}^Ka_l(\mathbf{1}_{\{r_l(\mathbf{Q}^n(\cdot) =j\}}-\mathbf{1}_{\{r_l(\mathbf{Q}^n(\cdot) =i\}}).$$
Recalling that  $|a|_*\doteq \max_{l \in [K]} |a_l|$, we now have that, for $n > |a|^2_*$, for each $s \geq 0$,
\begin{align*}
    \frac{1}{\sqrt{n}}|\mu^{i,j}(s)| 
    & = |(\Delta_{i,j}(s)-\sqrt{n})\mathbf{1}_{\{Q_i(s) > Q_j(s) = 0\}} + \Delta_{i,j}(s)\mathbf{1}_{\{Q_i(s) > Q_j(s) > 0\}}\\
    &\,\,\,\,\,\,\, - (\Delta_{i,j}(s)+\sqrt{n})\mathbf{1}_{\{Q_j(s) > Q_i(s) = 0\}} - \Delta_{i,j}(s)\mathbf{1}_{\{Q_j(s) > Q_i(s) > 0\}}| \\
    & \leq \sqrt{n}\mathbf{1}_{\{Q_j(s) = 0\}} + \sqrt{n}\mathbf{1}_{\{Q_i(s) = 0\}} + 2|a|_*.
\end{align*}
Consequently, for each $0 \le t_1 \le t_2$,
\begin{align}\label{eq:muij}
    \frac{1}{\sqrt{n}}\int_{t_1}^{t_2} |\mu^{i,j}(s)|ds &\leq  \sqrt{n}\int_{t_1}^{t_2}\mathbf{1}_{\{Q_j(s) = 0\}}ds + \sqrt{n}\int_{t_1}^{t_2}\mathbf{1}_{\{Q_i(s) = 0\}}ds + 2|a|_*(t_2-t_1).
\end{align}
The tightness of the family $\{\frac{1}{\sqrt{n}}\int_0^{\cdot}\mu^{i,j}(s)(s)ds, n \in \NN\}$ is now an immediate consequence of 
Lemma \ref{qtightprop}.

We now argue the tightness of the collection $\{M^n_{A,i,j}, M^n_{D,i,j}, n \in \NN\}$.
Note that, for $n > |a|^2_*$, the predictable quadratic variation $\langle M^n_{A,i,j}\rangle$ of the martingale $M^n_{A,i,j}$ satisfies
\begin{equation}\label{eq:qvmnaij}
	\langle M^n_{A,i,j}\rangle_t = \frac{1}{n} \int_0^t \beta^{i,j}(s) ds \le \frac{1}{n} \int_0^t(\kappa_i(s)+ \kappa_j(s)+2n) ds
\le 6t,\end{equation}
and the predictable quadratic variation $\langle M^n_{D,i,j}\rangle$ of the martingale $M^n_{D,i,j}$ satisfies
\begin{equation}\label{eq:qvmndij}
	\langle M^n_{D,i,j}\rangle_t = \frac{1}{n} \int_0^t \lambda^{i,j}(s) ds \le \frac{1}{n} \int_0^t(\kappa_i(s)+ \kappa_j(s)+n) ds
\le 5t.\end{equation}
The tightness of $\{M^n_{A,i,j}, M^n_{D,i,j}, n \in \NN\}$ is now immediate from standard tightness criteria.

Combining this with the tightness of $\{\frac{1}{\sqrt{n}}\int_0^{\cdot}\mu^{i,j}(s)ds, n \in \NN\}$ shown previously we now have the tightness of  $\{\mathscr{Z}^n_{i,j}(\cdot), n \in \NN\}$.
From the Lipschitz continuity of the map $\Gamma^1_1$ we now immediately get the tightness of $\{\zeta^n_{i,j}(\cdot), n \in \NN\}$, where
$$\zeta^n_{i,j}(t) \doteq \Gamma^1_1(\mathscr{Z}^n_{i,j})(t) 
= \frac{1}{\sqrt{n}}\int_0^t v^{i,j}(s) \mathbf{1}_{\{\hat{Z}_{i,j}^n(s)=0\}}ds, \; t \ge 0.$$
Finally, noting that for $n > 4|a|^2_*$, for all $s\ge 0$, on the event $\hat{Z}_{i,j}^n(s)=0$,
$$v^{i,j}(s) \ge \kappa_i(s) + \kappa_j(s) \ge \frac{n}{2} + \frac{n}{2}= n,$$
we have, for $0 \le t_1 \le t_2$,
\begin{align}\label{qijltineq}
\sqrt{n}  \int_{t_1}^{t_2}\mathbf{1}_{\{Q^n_i(s) = Q^n_j(s)\}}ds 
\le \frac{1}{\sqrt{n}} \int_{t_1}^{t_2} v^{i,j}(s) \mathbf{1}_{\{\hat{Z}_{i,j}^n(s)=0\}}ds \le |\zeta^n_{i,j}(t_2)- \zeta^n_{i,j}(t_1)|.
\end{align}
This shows that $\{\sqrt{n}  \int_{0}^{\cdot}\mathbf{1}_{\{Q^n_i(s) = Q^n_j(s)\}}ds, n \in\NN\}$ is tight  in $\mathcal{D}([0,\infty):\mathbb{R})$ implying the  convergence stated in the lemma.

Now we consider the second statement in the lemma. 
{\cb Note that for the one dimensional Skorohod map we have the following Lipschitz property: For $f, g \in \cld([0, \infty): \RR)$
and $T>0$
\begin{multline*}
\sup_{0\le t \le T} |\Gamma^1(f)- \Gamma^1(g)| = \sup_{0\le t \le T} |(f(t) - \inf_{0\le s \le t} (f(s)\wedge 0)) -
(g(t) - \inf_{0\le s \le t} (g(s)\wedge 0))|\\ 
\le 2 \sup_{0\le t \le T} |f(t)-g(t)|.
\end{multline*}
From this and the inequality \eqref{qijltineq}}, for $i,j \in [K]$ and $T \in (0, \infty)$,
\begin{align*}
	n\EE\left(\int_0^T\mathbf{1}_{\{Q^n_i(s) = Q^n_j(s)\}}ds \right)^2 &\le  \EE|\zeta^n_{i,j}(T)|^2 \le  4\EE \sup_{0\le t \le T}|\mathscr{Z}^n_{i,j}(t)|^2\\
	&\le 12\EE(\hat{Z}_{i,j}^n(0))^2+ 12\EE(\sup_{0\le t \le T} |M^n_{i,j}(t)|)^2 + 12\EE\left(\frac{1}{\sqrt{n}}\int_0^T|\mu^{i,j}(s)|ds\right)^2\\
	&\le 12\EE(\hat{Z}_{i,j}^n(0))^2+ 12\EE(\sup_{0\le t \le T} |M^n_{i,j}(t)|)^2 \\
	&\quad+
	72\max_{l\in [K]}\EE\left(\sqrt{n} \int_0^T \mathbf{1}_{\{Q_l(s)=0\}}ds\right)^2 + 144 |a|^2_*T^2,
\end{align*}
where the last inequality follows from \eqref{eq:muij}.
Using Lemma \ref{qtightprop} we get that
$$\max_{l\in [K]}\EE\left(\sqrt{n} \int_0^T \mathbf{1}_{\{Q_l(s)=0\}}\right)^2 \le 
D_0(1+ \EE|\hat \bfZ^n(0)|^2 + T^2).$$
Also,
\begin{align*}
	\EE(\sup_{0\le t \le T} |M^n_{i,j}(t)|)^2 \le 2 \EE(\sup_{0\le t \le T} |M^n_{A,i,j}(t)|)^2 +
	2 \EE(\sup_{0\le t \le T} |M^n_{D,i,j}(t)|)^2 \le 88T,
\end{align*}
where the last inequality is from Doob's inequality and estimates in \eqref{eq:qvmnaij} and \eqref{eq:qvmndij}.
The result follows on combining the last three estimates.
\end{proof}

As an immediate consequence of the above lemma we have the following  corollary on ties between ordered queues.
\begin{corollary}\label{pairwisecorr}
    For $i,j\in [K]$, $i\neq j$, and $T \in (0, \infty)$, as $n \rightarrow \infty$,
\begin{equation*}
    \int_0^{T} \mathbf{1}_{\{X^n_i(s) = X^n_j(s)\}} ds \rightarrow 0, \mbox{ in probability}.
\end{equation*}
Furthermore, there is a $D_2 \in (0, \infty)$ such that for every $T\in (0, \infty)$, $n \in \NN$ and
$i,j\in [K]$, $i\neq j$,
$$\EE \left(\int_0^{T} \mathbf{1}_{\{X^n_i(s) = X^n_j(s)\}} ds\right)^2 \le \frac{D_2}{n}(1+\EE(|\hat \bfZ^n(0)|^2) + T^2).$$
\end{corollary}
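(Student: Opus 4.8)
The plan is to deduce the corollary directly from Lemma \ref{pairwiselemma} by exploiting that the ranked queue-length process is nothing but the pathwise sorting of the labeled queue-length process, so that a coincidence between two \emph{ranked} queues at a time $s$ forces a coincidence between two \emph{labeled} queues at that same time.

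First I would record the relevant distributional identity. Let $\{Q^n_i(\cdot)\}_{i\in[K]}$ be the collection from Lemma \ref{pairwiselemma} (so $Q^n_i(0)=X^n_i(0)$), and let $\ti\Xbf^n(\cdot)=(\ti X^n_1(\cdot),\dots,\ti X^n_K(\cdot))'$ denote the coordinatewise reordering of $\Qbf^n(\cdot)$ at each time instant, with ties broken in the lexicographic order of the labels. Then $\ti\Xbf^n(\cdot)\eqdist\Xbf^n(\cdot)$ in $\mathcal{D}([0,\infty):\RR^K)$: the map sending a path to its sorted path (with the stated tie-breaking) is a deterministic measurable functional on the Skorokhod space, $\Xbf^n$ is by construction this functional applied to the labeled queue-length process, and $\Qbf^n$ from \eqref{unordqs} has the law of the labeled queue-length process.

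Next, fix $i<j$ in $[K]$. On $\{\ti X^n_i(s)=\ti X^n_j(s)\}$, monotonicity of the rank index gives $\ti X^n_i(s)=\ti X^n_{i+1}(s)=\cdots=\ti X^n_j(s)$, so there exist at least two distinct labels $k\neq l$ with $Q^n_k(s)=Q^n_l(s)$; hence $\mathbf{1}_{\{\ti X^n_i(s)=\ti X^n_j(s)\}}\le\sum_{1\le k<l\le K}\mathbf{1}_{\{Q^n_k(s)=Q^n_l(s)\}}$ for every $s\ge 0$. Integrating over $[0,T]$, the right-hand side is a finite sum of nonnegative terms each tending to $0$ in probability by Lemma \ref{pairwiselemma}; combined with the distributional identity of the previous paragraph this yields the first assertion. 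For the moment bound, squaring, applying $(\sum_{r=1}^m c_r)^2\le m\sum_{r=1}^m c_r^2$ with $m=\binom{K}{2}$, taking expectations, and invoking the second-moment estimate of Lemma \ref{pairwiselemma} gives the claimed bound with $D_2\doteq\binom{K}{2}^2 D_1$.

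The only step that is not pure bookkeeping is the distributional identity between the sorted version of the representation used in Lemma \ref{pairwiselemma} and $\Xbf^n$; since sorting is a measurable path-space map and both \eqref{ordqs} and \eqref{unordqs} are set up with the same lexicographic tie-breaking convention, this is immediate, so the corollary is genuinely short.
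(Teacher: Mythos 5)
Your proposal is correct and follows exactly the same route as the paper: bound the time two ranked queues coincide, pathwise, by the total time some pair of labeled queues coincide, then invoke Lemma \ref{pairwiselemma} for both the convergence in probability and the second-moment estimate (the paper's proof consists precisely of the inequality $\int_0^{T} \mathbf{1}_{\{X^n_i(s) = X^n_j(s)\}} ds \leq \sum_{k\neq l}\int_0^{T} \mathbf{1}_{\{Q^n_k(s) = Q^n_l(s)\}} ds$ together with that lemma). You are somewhat more explicit than the paper about the distributional identity between the coordinatewise sort of the process from \eqref{unordqs} and the process from \eqref{ordqs} — a worthwhile point to make since the two representations are constructed from nominally different collections of driving Poisson processes — but the underlying idea and the resulting constant $D_2$ (a fixed multiple of $D_1$ depending only on $K$) are the same.
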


\begin{proof}
The result is immediate from Lemma \ref{pairwiselemma}  and the inequality 
\begin{align*}
    \int_0^T \mathbf{1}_{\{X^n_i(s) = X^n_j(s)\}} ds \leq {\cb \sum_{k,l \in [K]: k < l}\int_0^t \mathbf{1}_{\{Q^n_k(s) = Q^n_l(s)\}} ds.}
\end{align*}
{\cb Indeed, there are ${K\choose 2}$ terms in the sum and the statement of Lemma \ref{pairwiselemma} applies to each term in the sum.}
\end{proof}

We now proceed to the proof of Theorem \ref{convtodiff}.\\

\noindent\textit{Proof of Theorem \ref{convtodiff}}.
From \eqref{ordqs} we can write, for $i \in [K]$ and $t\ge 0$,
\begin{align*}
\hat{X}^n_i(t) = \hat{X}^n_i(0)	+ \tilde M^n_i(t) + \sqrt{n} \int_0^t \alpha^n_i(s)\mathbf{1}_{\{X^n_i(s) < X^n_{i+1}(s)\}}ds
-\sqrt{n} \int_0^t \delta^n_i(s)\mathbf{1}_{\{X^n_{i}(s) > X^n_{i-1}(s)\}}ds,
\end{align*}
where
\begin{align*}
	\tilde M^n_i(t) & \doteq  \tilde M^n_{A,i}(t) - \tilde M^n_{D,i}(t),\\
    \tilde M^n_{A,i}(t) & \doteq \frac{A_i\left(n\int_0^t \alpha^n_i(s)\mathbf{1}_{\{X^n_i(s) < X^n_{i+1}(s)\}}ds\right)-n\int_0^t \alpha^n_i(s)\mathbf{1}_{\{X^n_i(s) < X^n_{i+1}(s)\}}ds}{\sqrt{n}},\\
    \tilde M^n_{D,i}(t) & \doteq \frac{D_i\left(n\int_0^t \delta^n_i(s)\mathbf{1}_{\{X^n_{i}(s) > X^n_{i-1}(s)\}}ds\right) - n\int_0^t \delta^n_i(s)\mathbf{1}_{\{X^n_{i}(s) > X^n_{i-1}(s)\}}ds}{\sqrt{n}}.
\end{align*}
Note that 
\begin{equation}\label{ad}
\alpha^n_i(t) = \delta^n_i(t) - \frac{1}{\sqrt{n}}\sum_{j=1}^K a_j \mathbf{1}_{\{X^n_j(t) = X^n_{i}(t)\}}.
\end{equation}
Thus, letting
$$\hat{L}_{i}^n(t)  = (2-\mathbf{1}_{\{1\}}(i))\sqrt{n}\int_0^t \delta_{i}(s)\mathbf{1}_{\{X^n_i(s) = X^n_{i-1}(s)\}},$$
where $1_{\{1\}}:\mathbb{N} \rightarrow \{0,1\}$ is the indicator of the singleton $\{1\}$, we have, for $i \in [K]$,
\begin{align*}
\sqrt{n} \int_0^t \delta^n_i(s)\mathbf{1}_{\{X^n_i(s) < X^n_{i+1}(s)\}}ds &= \sqrt{n} \int_0^t \delta^n_i(s)ds
	- \sqrt{n} \int_0^t \delta^n_i(s)\mathbf{1}_{\{X^n_i(s) = X^n_{i+1}(s)\}}ds\\
&= \sqrt{n} \int_0^t \delta^n_i(s)ds
	- \sqrt{n} \int_0^t \delta^n_{i+1}(s)\mathbf{1}_{\{X^n_i(s) = X^n_{i+1}(s)\}}ds\\
	&= \sqrt{n} \int_0^t \delta^n_i(s)ds - \frac{1}{2}\hat{L}_{i+1}^n(t),
\end{align*}
and
\begin{align*}
\sqrt{n} \int_0^t \delta^n_i(s)\mathbf{1}_{\{X^n_{i}(s) > X^n_{i-1}(s)\}}ds &= 	\sqrt{n} \int_0^t \delta^n_i(s)ds - \sqrt{n} \int_0^t \delta^n_i(s)\mathbf{1}_{\{X^n_{i}(s) = X^n_{i-1}(s)\}}ds\\
&= \sqrt{n} \int_0^t \delta^n_i(s)ds - \frac{1+\mathbf{1}_{\{1\}}(i)}{2}\hat{L}^n_i(t).
\end{align*}
Combining these we have
\begin{align*}
	&\sqrt{n} \int_0^t \delta^n_i(s)\mathbf{1}_{\{X^n_i(s) < X^n_{i+1}(s)\}}ds -
	\sqrt{n} \int_0^t \delta^n_i(s)\mathbf{1}_{\{X^n_{i}(s) > X^n_{i-1}(s)\}}ds\\
	&= \frac{1+\mathbf{1}_{\{1\}}(i)}{2}\hat{L}^n_i(t) - \frac{1}{2}\hat{L}_{i+1}^n(t).
\end{align*}
Next note that
\begin{align*}
 &\int_0^t (\sum_{j=1}^K a_j \mathbf{1}_{\{X^n_j(s) = X^n_{i}(s)\}}) \mathbf{1}_{\{X^n_i(s) < X^n_{i+1}(s)\}}ds\\
 &=\int_0^t \sum_{j=1}^{i-1} a_j \mathbf{1}_{\{X^n_j(s) = X^n_{i}(s) <X^n_{i+1}(s)\}} ds
  + \int_0^t a_i\mathbf{1}_{\{X^n_i(s) < X^n_{i+1}(s)\}}ds\\
 &= \int_0^t \sum_{j=1}^{i-1} a_j \mathbf{1}_{\{X^n_j(s) = X^n_{i}(s) <X^n_{i+1}(s)\}} ds + t a_i - a_i \int_0^t \mathbf{1}_{\{X^n_i(s) = X^n_{i+1}(s)\}}ds.
\end{align*}
Combining the last two displays and using \eqref{ad} we see that
\begin{align*}
	&\sqrt{n} \int_0^t \alpha^n_i(s)\mathbf{1}_{\{X^n_i(s) < X^n_{i+1}(s)\}}ds
	-\sqrt{n} \int_0^t \delta^n_i(s)\mathbf{1}_{\{X^n_{i}(s) > X^n_{i-1}(s)\}}ds\\
	&= -a_i t + \mathcal{E}_i^n(t) + \frac{1+\mathbf{1}_{\{1\}}(i)}{2}\hat{L}^n_i(t) - \frac{1}{2}\hat{L}_{i+1}^n(t)
\end{align*}
where
$$
\mathcal{E}_i^n(t)  = a_i\int_0^t  \mathbf{1}_{\{X_i(s) = X_{i+1}(s)\}}ds - \sum_{j=1}^{i-1}a_j\int_0^t \mathbf{1}_{\{X_j(s) = X_i(s) < X_{i+1}(s)\}}ds.$$
Thus, for $i \in [K]$ and $t\ge 0$,
$$\hat{X}^n_i(t) = \hat{X}^n_i(0)	+ \tilde M^n_i(t) -a_i t + \mathcal{E}_i^n(t) + \frac{1+\mathbf{1}_{\{1\}}(i)}{2}\hat{L}^n_i(t) - \frac{1}{2}\hat{L}_{i+1}^n(t).$$
Letting $\tilde{\mathcal{E}}_i^n(t) \doteq \mathcal{E}_i^n(t)- \mathcal{E}_{i-1}^n(t)$, where ${\mathcal{E}}_{0}^n(t)\doteq 0$, and
taking differences, we now see that $\hat \bfZ^n$ satisfies:
\begin{align}
\begin{split}\label{z0eqn}
    \hat Z^n_1(t) & = \hat Z^n_1(0) + \tilde M^n_1(t) - a_1t + \tilde{\mathcal{E}}_1^n(t) - \frac{1}{2}\hat L^n_2(t) + \hat L^n_1(t) \\
    \hat Z^n_2(t) & = \hat Z^n_2(0) + (\tilde M^n_2(t) - \tilde M^n_1(t)) - (a_2 - a_1)t +\tilde{\mathcal{E}}_2^n(t)-\frac{1}{2}\hat L^n_3(t) + \hat L^n_2(t) - \hat L^n_1(t) \\
    \hat Z^n_i(t) & = \hat Z^n_i(0) + (\tilde M^n_i(t) - \tilde M^n_{i-1}(t)) - (a_i - a_{i-1})t + \tilde{\mathcal{E}}_i^n(t)-\frac{1}{2}(\hat L^n_{i+1}(t) + \hat L^n_{i-1}(t)) + \hat L^n_{i}(t),
\end{split}
\end{align}
for $i \in \{3,\ldots,K\}$.

Now define the $K$-dimensional vectors
\begin{align*}
    {\cb \tilde{\mathbf{M}}^n(t) \doteq \frac{1}{\sqrt{2}}( \tilde{M}^n_1(t), \ldots,  \tilde{M}^n_K(t))'},\; 
    \hat{\mathbf{L}}^n(t) \doteq (\hat{L}^n_1(t), \ldots, \hat{L}_K^n(t))',\;
    \tilde{\mathcal{E}}^n(t) \doteq (\tilde{\mathcal{E}}_1^n(t),\ldots,\tilde{\mathcal{E}}_K^n(t))'.
\end{align*}
Then the evolution equation for $\hat \bfZ^n(\cdot)$ can be written in vector form as
\begin{align}
	\hat \bfZ^n(t) &= \hat \bfZ^n(0) + \boldsymbol{\rho} t + A\tilde{\mathbf{M}}^n(t) + \tilde{\mathcal{E}}^n(t) + \clr \hat{\mathbf{L}}^n(t)\nonumber\\
	&= \Gamma_2(\hat \bfZ^n(0) + \boldsymbol{\rho} \id + A\tilde{\mathbf{M}}^n+ \tilde{\mathcal{E}}^n)(t),\label{eq:838}
\end{align}
where $\Gamma = (\Gamma_1,\Gamma_2)$ is the Skorokhod map associated with the reflection matrix $\clr$ and the last line follows on noting that
$\int_0^t \hat Z^n_i(s) d\hat L^n_{i}(s) =0$ for all $i \in [K]$ and $t\ge 0$.

Next note that for $i \in [K]$
 and $t \geq 0$,
\begin{align}
\begin{split}\label{errorbound}
    |\mathcal{E}^n_i(t)| & = \left|a_i\int_0^t  \mathbf{1}_{\{X^n_i(s) = X^n_{i+1}(s)\}}ds - \sum_{j=1}^{i-1}a_j\int_0^t \mathbf{1}_{\{X^n_j(s) = X^n_i(s) < X^n_{i+1}(s)\}}ds\right| \\
    & \leq \sum_{j = 1}^{i-1}|a_j|\int_0^t \mathbf{1}_{\{X^n_j(s) = X^n_i(s)\}}ds + |a_i|\int_0^t  \mathbf{1}_{\{X^n_i(s) = X^n_{i+1}(s)\}}ds.
\end{split}
\end{align}
From Corollary \ref{pairwisecorr}, it now follows that, as $n\to \infty$, $\mathcal{E}^n \Rightarrow 0$ and consequently
\begin{equation} \label{eq:843}
	\tilde{\mathcal{E}}^n \Rightarrow 0 
	\end{equation} 
	in $\mathcal{D}([0,\infty):\mathbb{R}^K)$.

Also note that, by the functional central limit theorem for Poisson processes, with
\begin{align*}
    \mathcal{M}^n_{A,D}(t) \doteq (\frac{A_1(nt) - nt}{\sqrt{n}},..., \frac{A_K(nt) - nt}{\sqrt{n}}, \frac{D_1(nt) - nt}{\sqrt{n}}, ..., \frac{D_K(nt) - nt}{\sqrt{n}})',\; t \ge 0,
\end{align*}
 we have that
\begin{equation}\label{eq:fclt}
    \mathcal{M}^n_{A,D}(\cdot) \Rightarrow\,\, \tilde{\mathbf{B}}(\cdot),
\end{equation}
in $\mathcal{D}([0,\infty):\mathbb{R}^{2K})$ where $\tilde{\mathbf{B}}(\cdot) = (\tilde B_1(\cdot),\dots,\tilde B_{2K}(\cdot))'$ is a standard $2K$-dimensional Brownian motion. 

Next, define, for $i \in [K]$,
\begin{align*}
    \Phi_{A,i}^n(t) \doteq \int_0^t \alpha^n_i(s)\mathbf{1}_{\{X^n_i(s) < X^n_{i+1}(s)\}}ds, \;
    \Phi_{D,i}^n(t) \doteq \int_0^t \delta^n_i(s)\mathbf{1}_{\{X^n_{i}(s) > X^n_{i-1}(s)\}}ds,
\end{align*}
%
Observe that, 
\begin{align*}
    \Phi_{A,i}^n(t) & = \int_0^t \alpha^n_i(s)\mathbf{1}_{\{X^n_i(s) < X^n_{i+1}(s)\}}ds \\
    & = \sum_{j = 1}^K (1 - \frac{a_j}{\sqrt{n}})\int_0^t \mathbf{1}_{\{X^n_j(s) = X^n_i(s) < X^n_{i+1}(s)\}}ds \\
    & = \sum_{j \in [K]: j \neq i} (1 - \frac{a_j}{\sqrt{n}})\int_0^t \mathbf{1}_{\{X^n_j(s) = X^n_i(s) < X^n_{i+1}(s)\}}ds + (1 - \frac{a_i}{\sqrt{n}})\int_0^t \mathbf{1}_{\{X^n_i(s) < X^n_{i+1}(s)\}}ds \\
    & = \sum_{j \in [K]: j \neq i} (1 - \frac{a_j}{\sqrt{n}})\int_0^t \mathbf{1}_{\{X^n_j(s) = X^n_i(s) < X^n_{i+1}(s)\}}ds + (1 - \frac{a_i}{\sqrt{n}})t - (1 - \frac{a_i}{\sqrt{n}})\int_0^t \mathbf{1}_{\{X^n_i(s) = X^n_{i+1}(s)\}}ds. 
\end{align*}
Thus for $n \ge |a|^2_*$ and $T \in (0, \infty)$,	
\begin{align*}
	\sup_{0\le t \le T}|\Phi_{A,i}^n(t) - t| \le \frac{a_i}{\sqrt{n}}T + \sum_{j \in [K]: j \neq i} \int_0^T \mathbf{1}_{\{X_j(s) = X_i(s) < X_{i+1}(s)\}}ds + \int_0^T \mathbf{1}_{\{X_i(s) = X_{i+1}(s)\}}ds.
\end{align*}
From Corollary \ref{pairwisecorr}, the right side in the above expression converges to $0$ in probability which shows that, for every $T \in (0, \infty)$, as $n \to \infty$,
$$\sup_{0\le t \le T}|\Phi_{A,i}^n(t) - t| \to 0 \mbox{ in probability}.$$
A very similar calculation shows that for every $T \in (0, \infty)$, as $n \to \infty$,
$$\sup_{0\le t \le T}|\Phi_{D,i}^n(t) - t| \to 0 \mbox{ in probability}.$$
Combining the above two facts with \eqref{eq:fclt} we have by a standard result (see \cite[Sec. 14]{BilConv}), that with 
\begin{align*}
\tilde \bfM^n_{A,D}(t) &\doteq ( (\tilde M^n_{A, i})_{i=1}^K, (\tilde M^n_{D, i})_{i=1}^K)\\
& =
\left( (n^{-1/2}(A_i(n\Phi_{A,i}^n(t)) - n\Phi_{A,i}^n(t) ))_{i=1}^K,   (n^{-1/2}(D_i(n\Phi_{D,i}^n(t))- n\Phi_{D,i}^n(t)))_{i=1}^K\right), \; t \ge 0,
\end{align*}
we have, as $n\to \infty$,
\begin{equation}
	\tilde\bfM^n_{A,D} \Rightarrow  \tilde{\mathbf{B}}
\end{equation}	
in  $\mathcal{D}([0,\infty):\mathbb{R}^{2K})$.
Letting
$$B_i \doteq \frac{1}{\sqrt{2}}(\tilde B_i - \tilde B_{i+K}), \; i \in [K],$$
and 
$\bfB = (B_1, \ldots B_K)'$, we now have that, as $n\to \infty$,
$$\tilde{\mathbf{M}}^n \Rightarrow \bfB$$
in  $\mathcal{D}([0,\infty):\mathbb{R}^{K})$. Note that $\bfB$ is a standard $K$-dimensional Brownian motion.

Combining this with \eqref{eq:838}, \eqref{eq:843}, and recalling that $\hat \bfZ^n(0) \to \bfZ(0)$ and the independence of $\{X_i^n(0)\}_{i \in [K]}$ from
$\{A_i, D_i, 1 \le i \le K\}$, we now have that  $\mathbf{Z}(0)$ is independent of  $\mathbf{B}$ and
$\hat \bfZ^n \Rightarrow \bfZ$, where 
$$\mathbf{Z}= \Gamma_2(\mathbf{Z}(0) + \boldsymbol{\rho}\,\id+ A\mathbf{B}),$$
where $\Gamma = (\Gamma_1, \Gamma_2)$ is the Skorokhod map associated with the reflection matrix $\clr$.
The result follows.
\hfill \qedsymbol

\section{Stationary distributions for $\Zbf$}\label{seczstat}

In this section, we will prove Theorem \ref{statdistthm} which gives a necessary and sufficient condition for $\bfZ$ to be positive recurrent and furthermore provides an explicit product form for this stationary distribution as a product of Exponential distributions. The proofs of these results rely on  \cite{harwil, harwil2,srbmwilliams}.\\

\noindent \textbf{Proof of Theorem \ref{statdistthm}.}

By \cite[Theorems 3.3]{srbmwilliams} (see also \cite{harwil}), the process $\mathbf{Z}$ is positive recurrent if and only if $\mathscr{R}^{-1}\rho < 0$.
We will now verify that this condition holds if and only if the inequalities \eqref{stabcon} are satisfied. 

Write $a_0 \doteq 0$. Observe that, for $i \in [K]$,
\begin{align}
\begin{split}\label{rhoid}
    \sum_{j=1}^i(i-j+1)(-\rho_j) & =  \sum_{j=1}^i(i-j+1)(a_j-a_{j-1}) \\
    & = \sum_{j=1}^i(i-j+1)a_j-\sum_{j=0}^{i-1}(i-j)a_j
     = \sum_{j=1}^ia_j.
\end{split}
\end{align}
Also, for $i = 2, ..., K$, using \eqref{rhoid}, we have that
\begin{align*}
    \sum_{j=i}^Ka_j & = \sum_{j=1}^Ka_j - \sum_{j=1}^{i-1}a_j\\
    & = \sum_{j=1}^K(K-j+1)(-\rho_j) - \sum_{j=1}^{i-1}((i-1)-j+1)(-\rho_j) \\
    & = \sum_{j=i}^K(K-j+1)(-\rho_j) + (K-i+1)\sum_{j=1}^{i-1}(-\rho_j),
\end{align*}
Now define the vector $\mathbf{v}_K \doteq \sum_{j=1}^K(K-j+1)\mathbf{e}_j$ and, for $i = 2, ..., K$, $\mathbf{u}_K^i \doteq (K-i+1)\sum_{j=1}^{i-1}\mathbf{e}_j + \sum_{j=i}^K(K-j+1)\mathbf{e}_j$, where $\mathbf{e}_j$ is the $j$-th standard basis vector on $\mathbb{R}^K$. It is easy to check that $\mathbf{v}_K'\mathscr{R} = \mathbf{e}_1'$ and  $(\mathbf{u}_K^i)'\mathscr{R} = \frac{1}{2}\mathbf{e}_i'$ for $i = 2,...,K$. Then, using the above relations,
\begin{align*}
    (\mathscr{R}^{-1}\rho)_1 & = \mathbf{e}_1'\mathscr{R}^{-1}\rho = (\mathbf{v}_K'\mathscr{R})\mathscr{R}^{-1}\rho = \sum_{j=1}^K(K-j+1)\rho_j = -\sum_{j=1}^Ka_j,
\end{align*}
and, for $i = 2, ..., K$,
\begin{align*}
    (\mathscr{R}^{-1}\rho)_i = \mathbf{e}_i'\mathscr{R}^{-1}\rho = (2(\mathbf{u}_K^i)'\mathscr{R})\mathscr{R}^{-1}\rho & = 2(\sum_{j=i}^K(K-j+1)\rho_j + (K-i+1)\sum_{j=1}^{i-1}\rho_j) \\
    & = -2\sum_{j=i}^Ka_j.
\end{align*}
From these identities, it is clear that $\mathscr{R}^{-1}\rho < 0$ if and only if the inequalities \eqref{stabcon} are satisfied. 
This proves the first statement in Theorem \ref{statdistthm}.

We will now show that, under the stability condition \eqref{stabcon}, the stationary measure has a product form density with respect to Lebesgue measure. Define $\Lambda \doteq \textnormal{diag}(AA^T)$. By \cite[Theorem 6.1]{harwil2} (see also \cite[Theorem 3.5]{srbmwilliams}), it suffices to show that $2AA' = \mathscr{R}\Lambda + \Lambda \mathscr{R}'$.

Observe that
\begin{align*}
    AA' &= 
      2
    \begin{pmatrix}
    1 & -1 & 0 & 0 & 0 &\cdots &\, &\,  0 \\
    -1 & 2 & -1 & 0 & 0 &\cdots &\, &\, 0 \\
    0 & -1 & 2 & -1 & 0 &\cdots &\, &\,  0 \\
    \vdots & \vdots & \vdots & \ddots & \vdots & \vdots & \vdots & \vdots\\
    0 & \cdots & \,& \,  &\, &\, -1& 2 &-1 \\
   0 & \cdots & \,& \,  &\, &\, 0& -1 &2 \\
    \end{pmatrix}.
\end{align*}
Hence, by the definition of $\Lambda$ and recalling the definition of $\mathscr{R}' $, we see that
\begin{align*}
    \Lambda \mathscr{R}' 
	    & = 2
	    \begin{pmatrix}
	    1 & -1 & 0 & \cdots & 0 \\
	    -1 & 2 & -1 & \cdots & 0 \\
	    0 & -1 & 2 & \cdots & 0 \\
	    \vdots & \vdots & \vdots & \ddots & \vdots \\
	    0 & 0 & 0 & \cdots & 2 \\
	    \end{pmatrix}
	    = AA',
	\end{align*}
Since $AA'$ is symmetric we now have that 
$$ \Lambda \mathscr{R}'  + \mathscr{R}\Lambda = 2 \Lambda \mathscr{R}'  = 2AA'.$$

To obtain an explicit expression for the stationary density, we appeal to \cite[Theorem 3.5]{srbmwilliams}, according to which the unique stationary distribution $\hat \pi$ has a density $\pi$ given by the formula
$$\pi(x) = c_{\pi}e^{\eta' x}, x \in \RR_+^K,$$
where $\eta = 2 \Lambda^{-1} \mathscr{R}^{-1}\rho$.
The formula \eqref{expstatdist} is now immediate on using the expressions for $\Lambda$ and $\mathscr{R}^{-1}\rho$.
\hfill
\qedsymbol

\textbf{Proof of Corollary \ref{cor:spcas}}

Fix $a \in (0,\infty), b \in \mathbb{R}$ and observe that $\sum_{j=1}^Ka_j = Ka - b$ and $\sum_{j = i}^Ka_j = (K-i+1)a$ for $i \in \{2,..., K\}$. Thus the stability condition \eqref{stabcon} holds if and only if $b<aK$.
Hence, from Theorem \ref{statdistthm},  $\bfZ(\cdot)$ in this parameter regime is positive recurrent if and only if $b < aK$, and the exponents of the density of the stationary distribution are $\eta_1 = -(aK-b), \eta_i = -(K+1-i)a$ for all $i \in \{2,...,K\}$, as was to be shown.
\qedsymbol

\section{Convergence of stationary distribution}\label{secconvstat}

In this section, we will prove Theorem \ref{statdistconv}. Throughout the section we assume that \eqref{stabcon} holds. Recall from the proof of Theorem \ref{statdistthm} that this inequality is equivalent to the inequality $\clr^{-1}\rho < 0$.

Our approach is inspired by the Lyapunov function method used in the proof of \cite[Theorem 3.1]{BudhirajLee}. In the following, let $G^n \doteq \frac{1}{\sqrt{n}} \NN_0^K$. Note that the processes $\hat{\Zbf}^n(\cdot)$ take values in $G^n$.

The following estimate will be central in constructing suitable  Lyapunov functions.

\begin{lemma}\label{lyaplem1} There are $t_0, D_3 \in (0, \infty)$ such that for every $t \in [t_0, \infty)$, $n \in \NN$ and $\bfz \in G^n$,
	\begin{equation}\label{lyaplim}\mathbb{E}_{\mathbf{z}}|\hat{\Zbf}^n(|\mathbf{z}|t)|^2 \le D_3\left( t|\bfz| + \frac{1 + t^2(|\bfz|^2+1)}{n}\right).\end{equation}
\end{lemma}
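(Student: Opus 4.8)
The plan is to run the argument on the Skorokhod-map representation of $\hat\bfZ^n$ established in the proof of Theorem \ref{convtodiff}, using a deterministic ``fluid stability'' estimate for the Skorokhod map $\Gamma$ associated with $\clr$ together with the martingale and collision-time estimates already in hand. Recall from \eqref{eq:838} that, working under $\PP_\bfz$ (so $\hat\bfZ^n(0)=\bfz$ a.s.),
\[
\hat\bfZ^n = \Gamma_2\big(\bfz + \boldsymbol\rho\,\id + A\tilde{\mathbf M}^n + \tilde{\mathcal E}^n\big),
\]
where $A\tilde{\mathbf M}^n$ is the martingale built from the compensated Poisson processes and $\tilde{\mathcal E}^n$ is the ``collision'' error term controlled by \eqref{errorbound}. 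The point of the time horizon $|\bfz|t$ in \eqref{lyaplim} is that, under \eqref{stabcon} — equivalently $\clr^{-1}\boldsymbol\rho<0$ (see the proof of Theorem \ref{statdistthm}) — this is exactly the scale on which the stabilizing drift drives the fluid version of $\hat\bfZ^n$ from $\bfz$ down to the origin.

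First I would establish (or quote, in the spirit of \cite{srbmwilliams,harwil} and of the method of \cite{BudhirajLee}) the following deterministic property of $\Gamma$: because $\clr^{-1}\boldsymbol\rho<0$, there are constants $\lambda, c_0\in(0,\infty)$, depending only on $\clr$ and $\boldsymbol\rho$, such that for every $x_0\in\RR_+^K$, every $g\in\mathcal D([0,\infty):\RR^K)$ with $g(0)=0$, and every $s\ge0$,
\[
\big|\Gamma_2(x_0+\boldsymbol\rho\,\id+g)(s)\big|\le c_0\,(|x_0|-\lambda s)^+ + c_0\sup_{0\le u\le s}|g(u)|.
\]
The idea of the proof is that the noiseless path $\Gamma_2(x_0+\boldsymbol\rho\,\id)$ reaches and stays at $0$ by time $|x_0|/\lambda$ (finite-time fluid stability of the Skorokhod problem under $\clr^{-1}\boldsymbol\rho<0$), so that on the interval where it is nonzero the crude bound $\|\Gamma_2(\psi)\|_s\le c_\Gamma\|\psi\|_s$ (from $\Gamma_2(0)=0$ and Proposition \ref{prop:skor}) gives $|\Gamma_2(x_0+\boldsymbol\rho\,\id)(s)|\le c_0(|x_0|-\lambda s)^+$; the perturbation $g$ is then absorbed via the Lipschitz continuity of $\Gamma$. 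Setting $t_0\doteq\max\{1,\,2/\lambda\}$, we have $(|\bfz|-\lambda|\bfz|t)^+=0$ for $t\ge t_0$, so applying the displayed estimate with $x_0=\bfz$, $g=A\tilde{\mathbf M}^n+\tilde{\mathcal E}^n$, $s=|\bfz|t$ yields, for $t\ge t_0$,
\[
|\hat\bfZ^n(|\bfz|t)|^2\le 2c_0^2\Big(\sup_{0\le u\le|\bfz|t}|A\tilde{\mathbf M}^n(u)|^2+\sup_{0\le u\le|\bfz|t}|\tilde{\mathcal E}^n(u)|^2\Big).
\]

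It then remains to bound the two suprema in expectation. For the martingale term, the predictable quadratic variations of $\tilde M^n_{A,i}$ and $\tilde M^n_{D,i}$ grow at most linearly (as in \eqref{eq:qva}--\eqref{eq:qvd}, using $\alpha^n_i\le K(1+|a|_*)$ and $\delta^n_i\le K$), so Doob's $L^2$ inequality gives $\EE_\bfz\sup_{u\le S}|A\tilde{\mathbf M}^n(u)|^2\le C_1 S$ with $C_1$ depending only on $K$ and $|a|_*$; with $S=|\bfz|t$ this produces the $t|\bfz|$ term. For the collision term, \eqref{errorbound} bounds each $|\tilde{\mathcal E}^n_i(u)|$ by a fixed sum, over pairs $k\ne l$, of the nondecreasing quantities $\int_0^u\mathbf 1_{\{X^n_k(r)=X^n_l(r)\}}dr$, hence $\sup_{u\le S}|\tilde{\mathcal E}^n(u)|^2\le C\sum_{k\ne l}\big(\int_0^S\mathbf 1_{\{X^n_k=X^n_l\}}\big)^2$, and Corollary \ref{pairwisecorr} (with the deterministic initial condition $\bfz$, so $\EE_\bfz|\hat\bfZ^n(0)|^2=|\bfz|^2$) gives $\EE_\bfz\sup_{u\le S}|\tilde{\mathcal E}^n(u)|^2\le \tfrac{C}{n}(1+|\bfz|^2+S^2)$; taking $S=|\bfz|t$ with $t\ge t_0\ge1$ bounds this by $\tfrac{C}{n}(1+t^2(|\bfz|^2+1))$. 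Combining the two estimates yields \eqref{lyaplim}, with $D_3$ and $t_0$ depending only on $K$, $|a|_*$, and $\boldsymbol\rho$ (equivalently the $\ta_i$'s). The case $\bfz=\zero$ is trivial since then both sides refer to $\hat\bfZ^n(0)=\zero$.

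The main obstacle is the deterministic Skorokhod estimate in the second paragraph: what is really needed is the \emph{uniform} finite-time (or at worst linear-rate) fluid stability of $\Gamma$ under $\clr^{-1}\boldsymbol\rho<0$, with a rate $\lambda$ independent of the initial point $x_0$, rather than mere asymptotic attraction to the origin. If a citation in exactly this form is not at hand, one route is to build a positively homogeneous Lyapunov function $h$ for $(\clr,\boldsymbol\rho)$ (with $\nabla h\cdot\boldsymbol\rho\le-\lambda$ in the interior and $\nabla h$ nonpositive along the reflection directions on each boundary face), as in the theory behind \cite{srbmwilliams}; another is a direct piecewise-linear analysis exploiting the explicit band structure of $\clr$ in \eqref{XRmat}. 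An alternative to the whole argument, closer to the Lyapunov-function method of \cite{BudhirajLee} referenced in the text, is to write the semimartingale decomposition of $h(\hat\bfZ^n(\cdot))^2$ directly (again using \eqref{errorbound} and Corollary \ref{pairwisecorr} for the error terms and Doob for the martingale terms), which avoids the pathwise Skorokhod bound at the cost of smoothing $h$ and handling its generator carefully near the boundary; everything else in the proof is routine.
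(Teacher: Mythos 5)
Your approach is essentially the same as the paper's: work with the Skorokhod representation \eqref{eq:838}, compare $\hat\bfZ^n$ to the noiseless fluid path $\hat\Vbf \doteq \Gamma_2(\bfz + \boldsymbol\rho\,\id)$, observe that $\hat\Vbf$ hits and stays at $0$ by a time proportional to $|\bfz|$ (this is where $\clr^{-1}\rho < 0$ enters), and then control $\EE_{\bfz}\sup_{u\le |\bfz|t}|A\tilde{\mathbf M}^n(u)|^2$ and $\EE_{\bfz}\sup_{u\le |\bfz|t}|\tilde{\mathcal E}^n(u)|^2$ via Doob's inequality and Corollary \ref{pairwisecorr} respectively. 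The deterministic ingredient you flag as the main obstacle — uniform finite-time attraction of the fluid path to the origin under $\clr^{-1}\rho<0$ — is exactly what the paper supplies by invoking \cite[Lemma 3.1 and its proof]{AtarBudDup}: with $\delta_0$ the distance from $\rho$ to the boundary of $\{v : \clr^{-1}v < 0\}$, one has $\hat\Vbf(s) = 0$ for all $s \ge (1 + 4c_\Gamma^2/\delta_0)|\bfz|$, and the paper takes $t_0 = 1 + 4c_\Gamma^2/\delta_0$. One minor caveat: the displayed pathwise estimate you propose, with the $(|x_0| - \lambda s)^+$ term, is not actually justified by the reasoning you give — the crude Lipschitz bound $\|\Gamma_2(\psi)\|_s \le c_\Gamma\|\psi\|_s$ produces a bound that grows in $s$, not one that decays linearly — but this is harmless since you only use the estimate at times $s \ge |x_0|/\lambda$, where $(|x_0|-\lambda s)^+ = 0$ and all that is needed is that $\hat\Vbf$ has already reached and stayed at the origin.
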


\begin{proof}
For $n \in \NN$, $\bfz \in G^n$ and $t \ge 0$, let
$$\hat \Vbf(t) \doteq \Gamma_2(\bfz+ \boldsymbol{\rho} \id)(t),$$
where $\Gamma = (\Gamma_1, \Gamma_2)$ is the Skorokhod map associated with the reflection matrix $\clr$.
From \eqref{eq:838}, under the measure where $\hat \bfZ^n(0)=\bfz$,
\begin{align}
	|\hat \bfZ^n(t) - \hat \Vbf(t)| 
	&= |\Gamma_2(\bfz + \boldsymbol{\rho} \id + A\tilde{\mathbf{M}}^n+ \tilde{\mathcal{E}}^n)(t)
	- \Gamma_2(\bfz+ \boldsymbol{\rho} \id)(t)|\nonumber\\
	&\le c_{\Gamma} \sup_{0\le s \le t}(|A\tilde{\mathbf{M}}^n(s)|+ |\tilde{\mathcal{E}}^n(s)|).\label{eq:516n}
\end{align}
Also, recalling the definitions of $\cle^n_i$ and $\tilde{\mathcal{E}}_i$, we see that
\begin{align*}
	\sup_{0\le s \le t}|\tilde{\mathcal{E}}^n(s)|
	\le 2 |a|_*\sqrt{K} \sum_{i,j \in [K], i \neq j} \int_0^t \mathbf{1}_{\{X^n_i(s) = X^n_j(s)\}} ds
\end{align*}
and thus using Corollary \ref{pairwisecorr} we have that
\begin{align}
	\EE_{\bfz} \sup_{0\le s \le t}|\tilde{\mathcal{E}}^n(s)|^2 \le
	4|a|^2_*K^3\frac{D_2}{n}(1+|\bfz|^2+ t^2). \label{eq:518n}
\end{align}
Also, using standard martingale inequalities, for some $\tilde D \in (0, \infty)$ (independent of $n, \bfz, t$),
\begin{equation}\label{eq:519n}\EE_{\bfz} \sup_{0\le s \le t}|A\tilde{\mathbf{M}}^n(s)|^2 \le \tilde D t.\end{equation}
Let
\begin{align*}
    \mathcal{S} \doteq \{v \in \mathbb{R}^{K}: \mathscr{R}^{-1}v < 0\}.
\end{align*}
From our  assumption $\clr^{-1}\rho<0$, we can find a $\delta_0>0$ such that 
    \begin{align*}\rho \in \mathcal{C}(\delta_0) \doteq \{ v \in \mathcal{S}: \inf_{u \in \partial\mathcal{S}}|u - v| > \delta_0\}.
\end{align*}
From \cite[Lemma 3.1 and its proof]{AtarBudDup} it follows that for all 
$t \ge (1+\frac{4c_{\Gamma}^2}{\delta_0})|\bfz|$, we have
$\hat \Vbf(t) = 0$.
Thus, for all $t \ge t_0 \doteq (1+\frac{4c_{\Gamma}^2}{\delta_0})$,
\begin{align*}
\mathbb{E}_{\mathbf{z}}|\hat{\Zbf}^n(t|\mathbf{z}|)|^2 & = \mathbb{E}_{\mathbf{z}}|\hat{\Zbf}^n(t|\mathbf{z}|) - \hat \Vbf(t|\bfz|)|^2\\
&\le 2c_{\Gamma}^2 (\mathbb{E}_{\mathbf{z}}\sup_{0\le s \le t|\bfz|}|A\tilde{\mathbf{M}}^n(s)|^2
+ \mathbb{E}_{\mathbf{z}}\sup_{0\le s \le t|\bfz|}|\tilde{\mathcal{E}}^n(s)|^2),
\end{align*}
where the last line follows from \eqref{eq:516n}. 

The result now follows on using the estimates from \eqref{eq:518n} and \eqref{eq:519n} in the above display.
\end{proof}

 The remainder of the proof of Theorem \ref{statdistconv} follows \cite{BudhirajLee}, with appropriate modifications. We describe these modifications below. 
 
 For $\delta \in (0,\infty)$ and a set $C \in \mathcal{B}(\mathbb{R}^K_+)$, define $\tau_C(\delta) \doteq \inf\{ t \geq \delta: \Zbf^n(t) \in C\}$ (suppressing $n$ for notational convenience).

\begin{lemma}\label{lyaplem2}
There exists $n_0 \in \mathbb{N}$, a compact set $C \subset \mathbb{R}_+^K$ and constants $\bar{\delta}, c \in (0,\infty)$ such that, for all $n \ge n_0$ and $\mathbf{z}\in G^n$,
\begin{equation}
    \mathbb{E}_{\mathbf{z}}\left(\int_0^{\tau_C(\bar{\delta)}}(1+|\hat{\mathbf{Z}}^n(s)|)ds\right) \leq c(1 + |\mathbf{z}|^2).
\end{equation}
\end{lemma}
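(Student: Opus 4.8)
The plan is a Foster--Lyapunov argument with $W(\mathbf{w})=|\mathbf{w}|^2$, run on a \emph{fluid time scale}: because the drift bound \eqref{lyaplim} only produces contraction over a horizon of length $\asymp|\mathbf{w}|$, I would sample $\hat\bfZ^n$ along a state-dependent renewal sequence rather than at a fixed time step. Fix the cycle length $\Delta(\mathbf{w})\doteq t_0(1\vee|\mathbf{w}|)$, with $t_0$ as in Lemma \ref{lyaplem1}, and put $\bar\delta\doteq t_0$. I would first establish, for all $n$ beyond some $n_0$ and all $\mathbf{w}\in G^n$, a \textbf{drop estimate}
\[
\EE_{\mathbf{w}}\big[|\hat\bfZ^n(\Delta(\mathbf{w}))|^2\big]\le\theta|\mathbf{w}|^2+b\,\mathbf{1}_C(\mathbf{w}),\qquad \theta\doteq\tfrac12,
\]
and a \textbf{cycle-cost estimate} $\EE_{\mathbf{w}}\!\left[\int_0^{\Delta(\mathbf{w})}(1+|\hat\bfZ^n(s)|)\,ds\right]\le C_1(1+|\mathbf{w}|^2)$, where $C\doteq\{\mathbf{w}\in\RR_+^K:|\mathbf{w}|\le R\}$ for a suitably large $R$ and $b,C_1<\infty$ are $n$-independent. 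The drop estimate comes from \eqref{lyaplim} applied with time parameter $t_0$ (so $|\mathbf{w}|t_0=\Delta(\mathbf{w})$ when $|\mathbf{w}|\ge1$), choosing $n_0$ so that $D_3t_0^2/n_0\le\tfrac14$ and then $R$ so large that the leftover $O(|\mathbf{w}|)$ term is absorbed into $\tfrac14|\mathbf{w}|^2$ for $|\mathbf{w}|>R$; the constant $b$ is the (finite, $n$-independent) supremum of $\EE_{\mathbf{w}}|\hat\bfZ^n(\Delta(\mathbf{w}))|^2$ over $\{|\mathbf{w}|\le R\}$. The cycle-cost estimate follows from the sup-moment bound $\EE_{\mathbf{w}}\sup_{0\le s\le\Delta(\mathbf{w})}|\hat\bfZ^n(s)|^2\le C_0(1+|\mathbf{w}|^2)$, obtained exactly as in the proof of Lemma \ref{lyaplem1} by inserting \eqref{eq:519n} and \eqref{eq:518n} into the Skorokhod Lipschitz bound of Proposition \ref{prop:skor} applied to \eqref{eq:838} with $t=\Delta(\mathbf{w})$, combined with the Cauchy--Schwarz inequality.

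With the initial state $\bfz$ fixed, set $\tau_0\doteq0$, $\bfZ_0\doteq\bfz$ and recursively $\tau_{k+1}\doteq\tau_k+\Delta(\hat\bfZ^n(\tau_k))$, $\bfZ_k\doteq\hat\bfZ^n(\tau_k)$. Each $\tau_k$ is a stopping time (its increment is a deterministic function of the $\mathcal{F}_{\tau_k}$-measurable $\bfZ_k$), $\tau_{k+1}-\tau_k\ge t_0$, and $\tau_1=\Delta(\bfz)\ge t_0=\bar\delta$. Let $\sigma\doteq\inf\{k\ge1:\bfZ_k\in C\}$; since $\tau_\sigma\ge\bar\delta$ and $\hat\bfZ^n(\tau_\sigma)\in C$, we obtain $\tau_C(\bar\delta)\le\tau_\sigma$, so that
\[
\EE_{\bfz}\!\int_0^{\tau_C(\bar\delta)}\!(1+|\hat\bfZ^n(s)|)\,ds\le\sum_{k\ge0}\EE_{\bfz}\!\left[\mathbf{1}_{\{\sigma>k\}}\!\int_{\tau_k}^{\tau_{k+1}}\!(1+|\hat\bfZ^n(s)|)\,ds\right].
\]
Since $\{\sigma>k\}\in\mathcal{F}_{\tau_k}$, the strong Markov property of $\hat\bfZ^n$ and the cycle-cost estimate bound the $k$th summand by $C_1\,\EE_{\bfz}[\mathbf{1}_{\{\sigma>k\}}(1+|\bfZ_k|^2)]$.

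It then remains to sum this series. Conditioning on $\mathcal{F}_{\tau_{k-1}}$, using $\{\sigma>k\}\subseteq\{\sigma>k-1\}\in\mathcal{F}_{\tau_{k-1}}$, the strong Markov property and the drop estimate --- together with the observation that $\mathbf{1}_C(\bfZ_{k-1})=0$ on $\{\sigma>k-1\}$ when $k-1\ge1$ --- yields the recursion $\EE_{\bfz}[\mathbf{1}_{\{\sigma>k\}}|\bfZ_k|^2]\le\theta\,\EE_{\bfz}[\mathbf{1}_{\{\sigma>k-1\}}|\bfZ_{k-1}|^2]$ for $k\ge2$, hence $\EE_{\bfz}[\mathbf{1}_{\{\sigma>k\}}|\bfZ_k|^2]\le\theta^{k-1}(\theta|\bfz|^2+b)$; and since $|\bfZ_k|>R$ on $\{\sigma>k\}$ for $k\ge1$, Markov's inequality gives $\PP_{\bfz}(\sigma>k)\le R^{-2}\theta^{k-1}(\theta|\bfz|^2+b)$. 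Summing these two geometric series and adding the $k=0$ contribution ($\le C_1(1+|\bfz|^2)$) produces the asserted bound $c(1+|\bfz|^2)$, uniformly in $n\ge n_0$, with $C$, $\bar\delta=t_0$, $n_0$ and $c$ as required. The main obstacle is not any single estimate --- the fluid drift bound \eqref{lyaplim} and the Skorokhod/martingale moment bounds are already in hand --- but organizing this renewal bookkeeping so that the state-dependent cycle length $\Delta(\mathbf{w})\asymp|\mathbf{w}|$ simultaneously delivers the geometric decay needed for summability and keeps the per-cycle cost at $O(|\mathbf{w}|^2)$.
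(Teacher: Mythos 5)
Your proposal is correct and follows essentially the same strategy as the paper's proof: use Lemma \ref{lyaplem1} to obtain a one-step contraction of $|\cdot|^2$ over a state-dependent horizon of order $t_0|\mathbf{w}|$, bound the per-cycle running cost by $O(1+|\mathbf{w}|^2)$ via the Skorokhod/martingale estimates \eqref{eq:518n}--\eqref{eq:519n}, and sum geometrically over the excursions away from a large ball $C$. The only difference is that where the paper delegates the renewal bookkeeping to \cite[Theorem 3.4]{BudhirajLee} and \cite[Theorem 14.2.2]{meyn2012markov}, you carry it out explicitly with the stopping times $\tau_k$, which is a clean and self-contained rendering of the same argument.
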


\begin{proof}
Let $t_0$ be as in Lemma \ref{lyaplem1}. By Lemma \ref{lyaplem1}, for all $n \in \NN$ and $\bfz \in G^n$,
$$\frac{1}{|\bfz|^2}\mathbb{E}_{\mathbf{z}}|\hat{\Zbf}^n(|\mathbf{z}|t_0)|^2
\le D_3\left( \frac{t_0}{|\bfz|} +\frac{1}{n|\bfz|^2} + \frac{t_0^2(1+1/|\bfz|^2)}{n}\right).$$
Choose $L \in (0, \infty)$ such that
$$D_3\left( \frac{t_0}{L} +\frac{1}{L^2}\right) \le \frac{1}{4}$$
and then choose $n_0\in \NN$ such that
$$D_3\left(\frac{t_0^2(1+1/L^2)}{n_0}\right) \le \frac{1}{4}.$$
Letting $C \doteq \{z \in \RR_+^K: |z| \le L\}$, we have, for all $n \ge n_0$ and $\mathbf{z} \in C^c \cap G^n$,
\begin{equation}\label{il1}
\mathbb{E}_{\mathbf{z}}|\hat{\Zbf}^n(|\mathbf{z}|t_0)|^2 \le \frac{1}{2}|\bfz|^2.
\end{equation}
Using the representation \eqref{eq:838} for the dynamics of $\hat{\Zbf}^n(\cdot)$, the Lipschitz property of the Skorokhod map and the estimates \eqref{eq:518n} and \eqref{eq:519n}, it can be easily verified that there is a $c_0 \in (0, \infty)$, such that for all $n \in \NN$ and $\bfz \in G^n$, with $\sigma := t_0(|\bfz|\vee L)$,
\begin{align}\label{il2}
	\EE_{\bfz}  \int_0^{\sigma} (1+ |\hat{\Zbf}^n(t)|) dt \le c_0(1+|\bfz|^2).
\end{align}
Now we can follow exactly the arguments in \cite[Theorem 3.4]{BudhirajLee} to decompose the path of $\hat{\Zbf}^n(\cdot)$ in $[0,\tau_C(\delta)]$ into excursions. The bound in \eqref{il2} can be used to show that for large enough $n$, the expectation of the integral of the process $1+ |\hat{\Zbf}^n(\cdot)|$ over any such excursion can be bounded in terms of the expected initial value of $\hat{\Zbf}^n(\cdot)$ at the start of the excursion. By \eqref{il1}, this expected value contracts over successive excursions. These observations, along with Theorem 14.2.2 of \cite{meyn2012markov}, imply that, with $\bar{\delta} := t_0L$,
for all $n \ge n_0$ and $\mathbf{z}\in G^n$,
\begin{equation}
    \mathbb{E}_{\mathbf{z}}\left(\int_0^{\tau_C(\bar{\delta)}}(1+|\hat{\mathbf{Z}}^n(s)|)ds\right) \leq 3c_0(|\mathbf{z}|^2 + \tilde b)
\end{equation}
where $\tilde b \in (0, \infty)$ is a constant depending only on $D_3, L$ and $t_0$.
The result follows.
%
%
\end{proof}

The above estimate is sufficient to imply the existence of a stationary distribution for the pre-limiting process $\hat{\mathbf{Z}}^n(\cdot)$ for large enough $n$.
\begin{lemma}\label{statdistlem}
Suppose that the condition \eqref{stabcon} is satisfied. Recall $n_0$ from Lemma \ref{lyaplem2}. Then for each $n \ge n_0$, there is a unique stationary distribution $\hat{\pi}_n$ for the Markov process $\hat{\mathbf{Z}}^n(\cdot)$.
\end{lemma}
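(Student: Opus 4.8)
The strategy is to recognise $\hat{\mathbf{Z}}^n(\cdot)$ as a countable-state continuous-time Markov chain, to read Lemma~\ref{lyaplem2} as a Foster--Lyapunov bound driving the chain toward a finite set, and then to invoke the classical positive-recurrence criterion, as in \cite[proof of Theorem~3.4]{BudhirajLee}. First I would record that, for fixed $n$, the ordered queue-length vector $\mathbf{X}^n(\cdot)$ evolves by the autonomous equations \eqref{ordqs}, whose rates depend on the state only through the ordered values and the pattern of ties; hence $\mathbf{X}^n(\cdot)$, and therefore its fixed invertible linear image $\hat{\mathbf{Z}}^n(\cdot)$, is a continuous-time Markov chain on the countable set $G^n$. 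Its total jump rate out of any state is bounded by a constant depending only on $n$, $K$ and $\upp$ (the total arrival rate is $nK-\upp\sqrt n$ and the total departure rate is at most $nK$), so the chain is non-explosive; and since $\mathbb{P}(U_m=j)>0$ for every $j\in[K]$ (valid as $n>n_*$), a routine check from \eqref{ordqs} shows that $\hat{\mathbf{Z}}^n(\cdot)$ is irreducible on $G^n$. It therefore suffices to establish positive recurrence, which yields both the existence and the uniqueness of $\hat\pi_n$.

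Fix $n\ge n_0$ and let $C,\bar\delta,c$ be as in Lemma~\ref{lyaplem2}, so that $C\cap G^n$ is finite, being the intersection of a compact set with the discrete set $G^n$. Since $\tau_C(\bar\delta)\ge\sigma_C:=\inf\{t\ge0:\hat{\mathbf{Z}}^n(t)\in C\}$, discarding the integrand in Lemma~\ref{lyaplem2} yields, for every $\mathbf{z}\in G^n$,
\[
\mathbb{E}_{\mathbf{z}}(\sigma_C)\ \le\ \mathbb{E}_{\mathbf{z}}\big(\tau_C(\bar\delta)\big)\ \le\ \mathbb{E}_{\mathbf{z}}\Big(\int_0^{\tau_C(\bar\delta)}\big(1+|\hat{\mathbf{Z}}^n(s)|\big)\,ds\Big)\ \le\ c\,(1+|\mathbf{z}|^2)\ <\ \infty .
\]
Thus $\mathbb{E}_{\mathbf{z}}(\sigma_C)<\infty$ for every state, and, splitting at the first jump time (each state has finitely many neighbours and a bounded outgoing rate), this upgrades to a finite uniform bound $\sup_{\mathbf{z}\in C\cap G^n}\mathbb{E}_{\mathbf{z}}(\sigma_C^{+})<\infty$ on the expected return time $\sigma_C^{+}$ of $\hat{\mathbf{Z}}^n(\cdot)$ to the finite set $C\cap G^n$.

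A finite set that an irreducible, non-explosive countable-state chain returns to in uniformly bounded expected time forces positive recurrence (the trace of the chain on $C\cap G^n$ is a finite irreducible chain whose excursions have finite expected length, so the return time to any fixed state of $C\cap G^n$ has finite mean), and hence the whole chain is positive recurrent; this is exactly the standard criterion used, e.g., in \cite{BudhirajLee} and \cite[Ch.~13]{meyn2012markov}. Therefore $\hat{\mathbf{Z}}^n(\cdot)$ admits a unique stationary distribution $\hat\pi_n$, as claimed. The genuine content of the argument already resides in Lemmas~\ref{lyaplem1}--\ref{lyaplem2}, which produce the quadratic Lyapunov estimate uniformly in $n$; the remaining work here --- the non-explosivity and irreducibility checks, and the passage from the delayed hitting time $\tau_C(\bar\delta)$ to the return time to $C\cap G^n$ --- is routine, and I do not expect a genuine obstacle.
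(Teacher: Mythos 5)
Your proof is correct and takes essentially the same route as the paper: extract the bound $\mathbb{E}_{\mathbf{z}}(\tau_C(\bar\delta))\le c(1+|\mathbf{z}|^2)$ from Lemma~\ref{lyaplem2}, use it as a Foster--Lyapunov criterion to conclude positive recurrence of the countable-state chain $\hat{\mathbf{Z}}^n(\cdot)$, and get uniqueness from irreducibility. The only difference is that you spell out the routine steps (non-explosivity, irreducibility via the condition $n>n_*$, and the passage from the delayed hitting time to the return time to the finite set $C\cap G^n$) that the paper simply asserts.
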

\begin{proof}
By Lemma \ref{lyaplem2}, $\EE_{\bfz}(\tau_C(\bar{\delta)}) \le c_0(1+|\bfz|^2)$ for all $\bfz \in G^n$ and $n \ge n_0$. This implies the positive recurrence, and hence the existence of a stationary distribution, of $\hat{\mathbf{Z}}^n(\cdot)$ for $n \ge n_0$. The uniqueness follows from the irreducibility of the process.
\end{proof}

We can now complete the proof of Theorem \ref{statdistconv}.\\

\noindent\textbf{Proof of Theorem \ref{statdistconv}:}
The existence assertion in the theorem is proved in Lemma \ref{statdistlem}. Now, we prove the convergence of stationary distributions.
Let $\bar \delta, C, c, n_0$ be as in the statement of Lemma \ref{lyaplem2}. 
Let, for $n \ge n_0$ and $\bfz \in G^n$,
$$V_n(\bfz) \doteq \EE_{\bfz} \int_0^{{\tau_C(\bar{\delta)}}} (1+ |\hat{\mathbf{Z}}^n(s)|) ds.$$
Then exactly 
as in the proof of \cite[Theorem 3.5]{BudhirajLee} (which in turn closely follows the proof of 
\cite[Proposition 5.4]{daimey}) we have that, there is a $\bar \kappa \in (0, \infty)$ such that for all $n \ge n_0$, $\bfz \in G^n$, and $t>0$,
\begin{equation}\label{eq:lyabnd}
    \frac{1}{t}\mathbb{E}_{\mathbf{z}}V_n(\hat{\mathbf{Z}}^n(t)) + \frac{1}{t}\int_0^t\left(1+\mathbb{E}_{\mathbf{z}}|\hat{\mathbf{Z}}^n(s)|\right)ds \leq \frac{1}{t}V_n(\mathbf{z}) + \bar\kappa.
\end{equation}
Rest of the proof is exactly as the proof of \cite[Theorem 3.2]{BudhirajLee}, however we provide details for the sake of completeness.

It is sufficient to show that $\int_{G^n}(1+|\mathbf{z}|)d\hat{\pi}^n(\mathbf{z}) \leq \bar{\kappa}$ for every $n \geq n_0$. From this, the asserted convergence of stationary distributions follows by a standard subsequence argument and Theorem \ref{convtodiff} (see discussion after \cite[Theorem 3.2]{BudhirajLee}). Fix $t>0$. Let $m \in \mathbb{N}$, and for $\mathbf{z} \in G^n$,  set $V^m_n(\mathbf{z}) \doteq V_n(\mathbf{z})\wedge m$ and
\begin{align*}
    \Psi^m_n(\mathbf{z}) \doteq \frac{1}{t}V_n^m(\mathbf{z}) - \frac{1}{t}\mathbb{E}_{\mathbf{z}}V_n^m(\hat\Zbf^n(t)),\,\,\,\,\,\,\,\,\,\, \Psi_n(\mathbf{z}) \doteq \frac{1}{t}V_n(\mathbf{z}) - \frac{1}{t}\mathbb{E}_{\mathbf{z}}V_n(\hat\Zbf^n(t)).
\end{align*}
Note that $\Psi^m_n(\mathbf{z}) \nearrow \Psi_n(\mathbf{z})$ as $m \rightarrow \infty$ for all $\mathbf{z} \in G^n$ by the monotone convergence theorem. We show that $\Psi_n^m(\mathbf{z})$ is uniformly bounded from below. Observe that by \eqref{eq:lyabnd}, when $V_n(\mathbf{z}) \leq m$,
\begin{align*}
    \Psi_n^m(\mathbf{z}) =  \frac{1}{t}V_n^m(\mathbf{z}) - \frac{1}{t}\mathbb{E}_{\mathbf{z}}V_n^m(\hat\Zbf^n(t)) \geq \frac{1}{t}V_n(\mathbf{z}) - \frac{1}{t}\mathbb{E}_{\mathbf{z}}V_n(\hat\Zbf^n(t)) \geq -\bar{\kappa}.
\end{align*}
On the other hand, when $V_n(\mathbf{z}) \geq m$,
\begin{align*}
    \Psi_n^m(\mathbf{z}) =  \frac{1}{t}m - \frac{1}{t}\mathbb{E}_{\mathbf{z}}V_n^m(\hat\Zbf^n(t)) \geq \frac{1}{t}m - \frac{1}{t}m = 0.
\end{align*}
Hence, $\Psi_n^m(\mathbf{z}) \geq -\bar{\kappa}$, for all $\mathbf{z} \in G^n$. Applying Fatou's Lemma, we obtain
\begin{align}
\begin{split}\label{psiineq}
    \int_{G^n} \Psi_n(\mathbf{z}) d\hat{\pi}^n(\mathbf{z}) &\leq \liminf_{m \rightarrow \infty}\int_{G^n} \Psi_n^m(\mathbf{z}) d\hat{\pi}^n(\mathbf{z}) \\
    & = \liminf_{m \rightarrow \infty}(\int_{G^n} \frac{1}{t}V_n^m(\mathbf{z})  d\hat{\pi}^n(\mathbf{z}) - \frac{1}{t}\int_{G^n} \mathbb{E}_{\mathbf{z}}V_n^m(\mathbf{z})  d\hat{\pi}^n(\mathbf{z})) = 0,
\end{split}
\end{align}
where the last equality follows by the fact that $\hat{\pi}^n$ is a stationary measure of $\hat{\mathbf{Z}}^n(\cdot)$. Rearranging \eqref{eq:lyabnd}, we have
\begin{equation*}
    \Psi_n(\mathbf{z}) \geq  \frac{1}{t}\int_0^t\left(1+\mathbb{E}_{\mathbf{z}}|\hat{\mathbf{Z}}^n(s)|\right)ds - \bar{\kappa}.
\end{equation*}
Combining this and \eqref{psiineq}, we obtain
\begin{align*}
    0 \geq \int_{G^n} \Psi_n(\mathbf{z}) d\hat{\pi}^n(\mathbf{z}) & \geq \frac{1}{t}\int_0^t\int_{G^n}\left(1+\mathbb{E}_{\mathbf{z}}|\hat{\mathbf{Z}}^n(s)|\right)d\hat{\pi}^n(\mathbf{z})ds - \bar{\kappa} \\
    & = \int_{G^n}\left(1+|\mathbf{z}|\right)d\hat{\pi}^n(\mathbf{z}) - \bar{\kappa}.
\end{align*}
The theorem follows.
\qedsymbol

\section{Stability of $\hat{\mathbf{Z}}^n_c$ in the Unstable  Regime}\label{secunstab}

In this section, we prove Theorem \ref{unstabstatdist}, which considers the special case $a_1 = a-b$ and $a_i = a$ for $ i = 2, \ldots , K$ where $b> aK$ and $a\ge 0$. As noted in Corollary \ref{cor:spcas}, in this case, the diffusion $\bfZ$ is not positive recurrent and one can similarly check that $\hat \bfZ^n$ is not stable for all $n$ sufficiently large. However Theorem \ref{unstabstatdist} says that the process $\hat \bfZ^n_c$ introduced above Theorem \ref{unstabstatdist} does have nice stability properties.
Throughout the section we fix $K \ge 2$, $a\ge 0$ and $b> aK$ and let $a_1 = a-b$ and $a_i = a$ for $ i = 2, \ldots , K$.

We begin with the following elementary lemma.

\begin{lemma}\label{lem:skorprop}
	Let $\bfv \doteq (-a+b, -b, 0, \ldots , 0)' \in \RR^K$ and define $v(t) \doteq t\bfv$, $t \ge 0$. Let $\Gamma = (\Gamma_1, \Gamma_2)$ be the Skorokhod map associated with the reflection matrix $\clr$. Then $\Gamma_2(v)(t) = t\bfv_1$, where $\bfv_1 = (b/K-a, 0, \ldots ,0)'$.
\end{lemma}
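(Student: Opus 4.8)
The plan is to solve the Skorokhod problem for the linear input $v$ \emph{explicitly} and then invoke the uniqueness asserted in Proposition \ref{prop:skor}. Since $v(t) = t\bfv$ is linear in $t$, I expect the solution $(\eta,y) = \Gamma(v)$ to be linear as well, so I would look for vectors $\boldsymbol{\ell} = (\ell_1,\dots,\ell_K)' \in \RR_+^K$ and $\bfw = (w_1,\dots,w_K)' \in \RR_+^K$ with $\eta(t) = t\boldsymbol{\ell}$ and $y(t) = t\bfw$. With this ansatz, condition (i) of the Skorokhod problem reads $\bfw = \bfv + \clr\boldsymbol{\ell}$, conditions (ii)(a),(b) hold automatically once $\boldsymbol{\ell} \ge 0$, and condition (ii)(c), $\int_0^\infty y_i\,d\eta_i = 0$, reduces (using $y_i,\eta_i \ge 0$ and linearity) to the complementarity $w_i\ell_i = 0$ for each $i \in [K]$. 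The target being $\bfw = \bfv_1 = (b/K - a, 0,\dots,0)'$, and the hypothesis $b > aK$ forcing $w_1 = b/K - a > 0$, complementarity dictates $\ell_1 = 0$, while $w_i = 0$ for $i \ge 2$ leaves $\ell_i$ unconstrained by (ii)(c).

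It then remains to produce $\boldsymbol{\ell} \ge 0$ with $\ell_1 = 0$ and $\clr\boldsymbol{\ell} = \bfv_1 - \bfv = \big(-b(K-1)/K,\ b,\ 0,\dots,0\big)'$. I would take $\ell_i \doteq \tfrac{2b}{K}(K-i+1)$ for $2 \le i \le K$ and verify the identity row by row using the band structure of $\clr$ in \eqref{XRmat}: the first row gives $\ell_1 - \tfrac12\ell_2 = -b(K-1)/K$; the second row gives $-\ell_1 + \ell_2 - \tfrac12\ell_3 = b$; and each row $i$ with $3 \le i \le K$ gives $-\tfrac12\ell_{i-1} + \ell_i - \tfrac12\ell_{i+1} = 0$ (with the convention $\ell_{K+1} \doteq 0$), which holds because $i \mapsto K-i+1$ is affine, so $\ell_{i-1} + \ell_{i+1} = 2\ell_i$. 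Since $b > aK \ge 0$ gives $b > 0$, we have $\boldsymbol{\ell} \ge 0$ and $\bfv_1 \ge 0$, and $v(0) = 0 \ge 0$ puts $v$ in $\mathcal{D}_0([0,\infty):\RR^K)$; hence $(\eta,y) = \big(t \mapsto t\boldsymbol{\ell},\ t \mapsto t\bfv_1\big)$ satisfies all the requirements of Proposition \ref{prop:skor}, and by uniqueness $\Gamma_2(v)(t) = t\bfv_1$, as claimed.

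There is no substantive obstacle: the lemma is a direct verification. The only point requiring a little care is guessing $\boldsymbol{\ell}$ and checking that the interior rows of $\clr\boldsymbol{\ell}$ vanish, which is precisely where the coefficients $-\tfrac12, 1, -\tfrac12$ of $\clr$ (together with the exceptional entries $-1, 1$ in its second row) enter; one also notes that complementarity at $i=1$ is automatic since $\eta_1 \equiv 0$.
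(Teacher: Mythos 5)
Your proof is correct and takes essentially the same approach as the paper: posit the linear candidate solution $(\eta,y)=(t\boldsymbol{\ell},t\bfv_1)$ to the Skorokhod problem for $\clr$, verify the nonnegativity, monotonicity and complementarity conditions, and invoke uniqueness from Proposition \ref{prop:skor}. The only difference is computational and cosmetic: the paper obtains $\boldsymbol{\ell} = \clr^{-1}(\bfv_1-\bfv)$ via the inner-product identities $\mathbf{v}_K'\clr = \mathbf{e}_1'$, $(\mathbf{u}_K^i)'\clr = \tfrac12\mathbf{e}_i'$ already established in the proof of Theorem \ref{statdistthm}, whereas you guess the explicit vector and verify $\clr\boldsymbol{\ell}=\bfv_1-\bfv$ row by row using the band structure of $\clr$; both produce the same $\boldsymbol{\ell}=\big(0,\tfrac{2b(K-1)}{K},\dots,\tfrac{2b}{K}\big)'$.
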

\begin{proof}
	It suffices to check that $\bfv_0 \doteq \clr^{-1}(\bfv_1-\bfv)$ satisfies $\bfv_0(1) =0$ and $\bfv_0(i) \ge 0$ for $i = 2, \ldots K$.
Define $\mathbf{h} \doteq \bfv_1-\bfv = (- \frac{K-1}{K}b, b,0,...,0)'$. Recall the quantities $\mathbf{v}_K$ and $\mathbf{u}_K^i$ from the proof of Theorem \ref{statdistthm}, and the identities $\mathbf{v}_K'\mathscr{R} = \mathbf{e}_1'$ and $(\mathbf{u}_K^i)'\mathscr{R} = \frac{1}{2}\mathbf{e}_i'$ for $i = 2,...,K$. Observe that 
\begin{align*}
        (\mathscr{R}^{-1}\mathbf{h})_1 = \mathbf{e}_1'\mathscr{R}^{-1}\mathbf{h} = (\mathbf{v}_K'\mathscr{R})\mathscr{R}^{-1}\mathbf{h} &= \sum_{j=1}^K(K-j+1)h_j \\
        & = K(-\frac{K-1}{K}b) + (K-1)b = 0,
    \end{align*}
    and, for $i = 2, ..., K$,
    \begin{align*}
        (\mathscr{R}^{-1}\mathbf{h})_i = \mathbf{e}_i'\mathscr{R}^{-1}\mathbf{h} = 2((\mathbf{u}_K^i)'\mathscr{R})\mathscr{R}^{-1}\mathbf{h} &= 2\sum_{j=i}^K(K-j+1)h_j + 2(K-i+1)\sum_{j=1}^{i-1}h_j \\
        & = \frac{2(K+1-i)}{K}b \geq 0.
    \end{align*}
\end{proof}

Recall the processes $\tilde{\mathbf{M}}^n(\cdot)$ and $\tilde{\mathcal{E}}^n(\cdot)$ from the proof of Theorem \ref{convtodiff}.
\begin{lemma}\label{lem:martlln}
	There is a $D_4 \in (0, \infty)$ such that, for all $n \in \NN$
	$$\PP\left(\limsup_{t\to \infty}\frac{\sup_{0\le s \le t}(|\tilde{\mathbf{M}}^n(s)| + |\tilde{\mathcal{E}}^n(s)|)}{t} \le \frac{D_4}{\sqrt{n}}\right)  = 1.$$
\end{lemma}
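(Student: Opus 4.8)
The plan is to control the martingale term $\tilde{\mathbf M}^n$ and the error term $\tilde{\mathcal E}^n$ separately: the first will contribute nothing to the $\limsup$, and the second will produce the bound $D_4/\sqrt n$. For $\tilde{\mathbf M}^n=2^{-1/2}(M^n_1,\dots,M^n_K)'$, each coordinate $M^n_i=\tilde M^n_{A,i}-\tilde M^n_{D,i}$ is a square-integrable martingale with jumps of size $n^{-1/2}$, and from $0\le\alpha^n_i\le K(1+|a|_*)$ and $1\le\delta^n_i\le K$ one gets the $n$-uniform bound $\langle M^n_i\rangle_t\le c_1 t$. Applying Doob's $L^2$ inequality on the dyadic blocks $[0,2^{k+1}]$ together with Borel--Cantelli then yields, for every fixed $n$,
\[
\limsup_{t\to\infty}\frac1t\sup_{0\le s\le t}|\tilde{\mathbf M}^n(s)|=0\qquad\text{a.s.}
\]
The same dyadic argument applies to every other martingale appearing below, each of which has predictable quadratic variation bounded by a constant times $t$ (the constant may depend on $n$, which is harmless for an a.s.\ statement at fixed $n$).

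For $\tilde{\mathcal E}^n$, from $\tilde{\mathcal E}^n_i=\mathcal E^n_i-\mathcal E^n_{i-1}$, the definition of $\mathcal E^n_i$, and the bound $\int_0^t\mathbf 1_{\{X^n_i(s)=X^n_j(s)\}}ds\le\sum_{k\ne l}\int_0^t\mathbf 1_{\{Q^n_k(s)=Q^n_l(s)\}}ds$ used in Corollary \ref{pairwisecorr}, there is a constant $c_2=c_2(K,|a|_*)$ with $\sup_{0\le s\le t}|\tilde{\mathcal E}^n(s)|\le c_2\sum_{k\ne l}\int_0^t\mathbf 1_{\{Q^n_k(s)=Q^n_l(s)\}}ds$. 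Hence it suffices to bound $\limsup_{t\to\infty}\frac{\sqrt n}{t}\int_0^t\mathbf 1_{\{Q^n_k(s)=Q^n_l(s)\}}ds$ by an $n$-independent constant for each pair $k\ne l$. Arguing exactly as for \eqref{qijltineq} — on $\{Q^n_k=Q^n_l\}$ one has $v^{k,l}\ge\kappa_k+\kappa_l\ge 2(n-a\sqrt n)>0$, using $\max_l\ta_l=a$ in the regime at hand — gives $\sqrt n\int_0^t\mathbf 1_{\{Q^n_k=Q^n_l\}}ds\le c_n\,\zeta^n_{k,l}(t)$ with $c_n=(2(1-a/\sqrt n))^{-1}$, a quantity nonincreasing in $n$ and finite at the smallest admissible $n$; here $\zeta^n_{k,l}=\Gamma^1_1(\mathscr Z^n_{k,l})$ is the one-dimensional Skorokhod regulator, so $\zeta^n_{k,l}(t)=\sup_{0\le s\le t}(-\mathscr Z^n_{k,l}(s))^+$ with $\mathscr Z^n_{k,l}=\hat Z^n_{k,l}(0)+M^n_{k,l}+n^{-1/2}\int_0^\cdot\mu^{k,l}$. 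Since $\sup_{0\le s\le t}|M^n_{k,l}(s)|=o(t)$ a.s.\ and, by \eqref{eq:muij}, $n^{-1/2}\int_0^s|\mu^{k,l}(u)|du\le\sqrt n\int_0^s\mathbf 1_{\{Q^n_k(u)=0\}}du+\sqrt n\int_0^s\mathbf 1_{\{Q^n_l(u)=0\}}du+2|a|_* s$, it follows that
\[
\limsup_{t\to\infty}\frac1t\zeta^n_{k,l}(t)\le 2|a|_*+\sum_{m\in\{k,l\}}\limsup_{t\to\infty}\frac{\sqrt n}{t}\int_0^t\mathbf 1_{\{Q^n_m(u)=0\}}du .
\]

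The remaining and main step is the single-queue empty-time estimate $\limsup_{t\to\infty}\frac{\sqrt n}{t}\int_0^t\mathbf 1_{\{Q^n_m(u)=0\}}du\le a$ a.s. Write $Q^n_m(t)=\psi_m(t)+R^n_m(t)$ with $\psi_m(t)=Q^n_m(0)+A_m(\int_0^t\kappa^n_m(u)du)-D_m(nt)$ and $R^n_m(t)\doteq D_m(nt)-D_m(n\int_0^t\mathbf 1_{\{Q^n_m(u)>0\}}du)$; then $R^n_m$ is nondecreasing, starts at $0$, and increases only when $Q^n_m=0$, so by uniqueness in the one-dimensional Skorokhod problem $R^n_m=\Gamma^1_1(\psi_m)$, i.e.\ $R^n_m(t)=\sup_{0\le s\le t}(-\psi_m(s))^+$. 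Since $\kappa^n_m(u)\ge n-a\sqrt n$, one has $A_m(\int_0^s\kappa^n_m(u)du)\ge A_m((n-a\sqrt n)s)$, and writing both Poisson processes as compensated martingale plus linear drift,
\[
R^n_m(t)\le\sup_{0\le s\le t}|D_m(ns)-ns|+\sup_{0\le s\le t}|A_m((n-a\sqrt n)s)-(n-a\sqrt n)s|+a\sqrt n\,t ;
\]
the first two suprema are $o(t)$ a.s.\ by the dyadic argument, so $\limsup_{t\to\infty}t^{-1}R^n_m(t)\le a\sqrt n$. Finally $R^n_m(t)=n\int_0^t\mathbf 1_{\{Q^n_m(u)=0\}}du+\int_0^t\mathbf 1_{\{Q^n_m(u)=0\}}\,d(D_m(nu)-nu)$, the last term being a martingale with predictable quadratic variation $\le nt$, hence $o(t)$ a.s.; dividing by $nt$ and letting $t\to\infty$ gives the claim.

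Assembling the pieces, $\limsup_{t\to\infty}\frac1t\sup_{0\le s\le t}(|\tilde{\mathbf M}^n(s)|+|\tilde{\mathcal E}^n(s)|)\le c_2\,K(K-1)(2|a|_*+2a)\,c_n/\sqrt n$, and since $c_n$ is nonincreasing in $n$ and finite at the smallest admissible $n$, one may take $D_4$ to be this bound evaluated at that smallest admissible value, a finite constant depending only on $K$, $|a|_*$ and $n_*$. I expect the single-queue empty-time bound to be the main obstacle — it is where the "stability of the system modulo the lowest queue" in the unstable regime genuinely enters — whereas everything else is routine martingale LIL/SLLN bookkeeping and Skorokhod-map estimates.
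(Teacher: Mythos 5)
Your overall architecture parallels the paper's: show the martingale term gives $\limsup\sup_{s\le t}|\tilde{\mathbf M}^n(s)|/t=0$ a.s.\ by Borel--Cantelli, then reduce the error term via Corollary~\ref{pairwisecorr}'s chain $\tilde{\mathcal E}^n\rightsquigarrow\int\mathbf 1_{\{Q^n_k=Q^n_l\}}\rightsquigarrow\zeta^n_{k,l}\rightsquigarrow\sqrt n\int\mathbf 1_{\{Q^n_m=0\}}$, and finally produce a linear-in-$t$ a.s.\ bound on the idle-time integral. The martingale step (dyadic blocks with Doob's $L^2$ inequality rather than the paper's $L^4$ BDG bound on integer blocks) is fine, as is the reduction to the idle-time integral. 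The gap is exactly where you suspected it would be: the idle-time estimate.

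Your decomposition $Q^n_m(t)=\psi_m(t)+R^n_m(t)$ with $\psi_m(t)=Q^n_m(0)+A_m\bigl(\int_0^t\kappa^n_m\bigr)-D_m(nt)$ and $R^n_m(t)=D_m(nt)-D_m\bigl(n\int_0^t\mathbf 1_{\{Q^n_m>0\}}\bigr)$ does not set up a Skorokhod problem, because $R^n_m$ is \emph{not} nondecreasing. Writing $\tau(t)\doteq\int_0^t\mathbf 1_{\{Q^n_m(u)>0\}}du$, the quantity $R^n_m(t)$ counts $D_m$-points in the interval $(n\tau(t),nt]$. When $Q^n_m(t)>0$ both endpoints move at rate $n$: the interval slides without changing length, and $D_m$-points can exit from the left. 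In that case $R^n_m$ jumps down. A concrete instance: if the queue is busy on $[0,1.5]$, empty on $[1.5,3]$, busy again after $3$, and $D_m$ has atoms at $0.5,1.5,2.5,3.5$, then $R^n_m(3.9)=D_m(3.9)-D_m(2.4)=2$ while $R^n_m(4)=D_m(4)-D_m(2.5)=1$. Since the uniqueness of the one-dimensional Skorokhod problem requires the regulator to be nondecreasing, the identification $R^n_m=\Gamma^1_1(\psi_m)$ is unjustified and in fact false. The follow-up claim $R^n_m(t)=n\int_0^t\mathbf 1_{\{Q^n_m=0\}}+\int_0^t\mathbf 1_{\{Q^n_m=0\}}d(D_m(n\cdot)-n\,\cdot)$ fails for the same reason --- its right-hand side simplifies to $\int_0^t\mathbf 1_{\{Q^n_m=0\}}dD_m(nu)$, which is nondecreasing, whereas the left-hand side is not.

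The correct way to get the idle-time bound is the one already built into Lemma~\ref{qtightprop}, and it is what the paper invokes: do \emph{not} separate out $D_m(nt)$. Instead compensate the Poisson processes at their actual time-changed arguments, so that $\hat Q^n_m(t)=\hat Q^n_m(0)+M^n_m(t)-\sum_j\ta_j\int_0^t\mathbf 1_{\{r_j(\Qbf^n(s))=m\}}ds+U^n_m(t)$ with $M^n_m=M^n_{A,m}-M^n_{D,m}$ a martingale and $U^n_m(t)=\sqrt n\int_0^t\mathbf 1_{\{Q^n_m(s)=0\}}ds$. Here $U^n_m$ is manifestly nondecreasing, starts at $0$, and grows only on $\{\hat Q^n_m=0\}$, so $U^n_m=\Gamma^{1}_1(\mathscr Q^n_m)$ holds by uniqueness, giving $U^n_m(t)\le\sup_{s\le t}|\mathscr Q^n_m(s)|\le|\hat Q^n_m(0)|+\sup_{s\le t}|M^n_m(s)|+|a|_*t$. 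Applying the same Borel--Cantelli argument to $M^n_m$ then yields $\limsup_{t\to\infty}t^{-1}\sqrt n\int_0^t\mathbf 1_{\{Q^n_m=0\}}\le|a|_*$ a.s., which feeds into the rest of your chain and gives the paper's constant $D_4=4K^4|a|_*^2$. With that substitution your proof goes through; as written, the idle-time step does not.
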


\begin{proof}
First observe that by the Burkholder-Davis-Gundy inequality and the forms of the quadratic variations of $\tilde{M}^n_i(\cdot)$, there exists a $D \in (0,\infty)$ (independent of $n$) such that, for all $t \geq 0$,
\begin{align*}
    \mathbb{E}\sup_{0 \leq s \leq t}|\tilde{\mathbf{M}}^n(s)|^4 \leq Dt^2.
\end{align*}
Let $\epsilon > 0$. Then, for each $k \in \mathbb{N}$,
\begin{align*}
    \mathbb{P}\left(\sup_{0\le s \le t}|\tilde{\mathbf{M}}^n(s)| > \epsilon t\,\, \textnormal{for some } t \in [k,k+1]\right) & \leq \mathbb{P}(\sup_{0 \leq s \leq k+1}|\tilde{\mathbf{M}}^n(s)| > \epsilon k) \\
    & \leq \frac{1}{(\epsilon k)^4}\mathbb{E}\sup_{0 \leq s \leq k+1}|\tilde{\mathbf{M}}^n(s)|^4 \\
    & \leq \frac{D(k+1)^2}{(\epsilon k)^4},
\end{align*}
where the second to last step follows by Markov's inequality. Hence, the Borel-Cantelli lemma shows that
\begin{equation}\label{eq:limsupmgle}
    \mathbb{P}(\limsup_{t\rightarrow\infty} \sup_{0\le s \le t}|\tilde{\mathbf{M}}^n(s)|/t = 0) = 1.
\end{equation}
Recall the processes $M^n_i(\cdot)$ defined in the proof of Lemma \ref{qtightprop} and $\hat{Z}^n_{i,j}(\cdot)$, $M^n_{i,j}(\cdot)$, $\zeta^n_{i,j}(\cdot)$, $\mu^{i,j}(\cdot)$ defined in the proof of Lemma \ref{pairwiselemma}. Taking $n > 4|a|^2_*$, we have that for $i,j \in [K]$ with $i \neq j$,

\begin{align}
\begin{split}
    \sqrt{n}\int_0^t \mathbf{1}_{\{X^n_i(s) = X^n_j(s)\}} ds &\leq \sqrt{n}\sum_{k,l \in [K]: k \neq l}\int_0^t \mathbf{1}_{\{Q^n_k(s) = Q^n_l(s)\}} ds  \\
    & \le \sum_{k,l \in [K]: k \neq l}|\zeta^n_{k,l}(t)| \\
    & \leq \sum_{k,l \in [K]: k \neq l}(\sup_{0 \leq s \leq t}|M^n_{k,l}(s)| + \hat{Z}^n_{k,l}(0)+ \frac{1}{\sqrt{n}}\int_0^t|\mu^{k,l}|(s)) \\
    & \leq \sum_{k,l \in [K]: k \neq l}(\sup_{0 \leq s \leq t}|M^n_{k,l}(s)| + \hat{Z}^n_{k,l}(0)+\sqrt{n}\int_{0}^{t}\mathbf{1}_{\{Q_l(s) = 0\}}ds \\
    &\,\,\,\,\,\,\,\,\,\,\,\,\,\,\,\,\,\,\,\,\,\,\,\,\,\,\,\,\,\,\,\,\,\,\,\,\,\,\,\,\,\,\,+ \sqrt{n}\int_{0}^{t}\mathbf{1}_{\{Q_k(s) = 0\}}ds + 2|a|_*t).
\end{split}
\end{align}
where the second inequality follows by \eqref{qijltineq}, the third inequality follows by the Lipschitz property of the one-dimensional Skorokhod map on $\mathbb{R}_+$ and the fourth inequality follows by \eqref{eq:muij}.

Using the Lipschitz property of the one-dimensional Skorokhod map to the bound above as in the proof of Lemma \ref{qtightprop}, we obtain
\begin{equation}\label{xijineqbloc}
\sqrt{n}\int_0^t \mathbf{1}_{\{X^n_i(s) = X^n_j(s)\}} ds \le \mathscr{M}^n(t) + \hat{C}^n + 4K^2|a|_*t,
\end{equation}
where
\begin{align*}
    \mathscr{M}^n(t) &\doteq \sum_{k,l \in [K]: k \neq l}\sup_{0 \leq s \leq t}|M^n_{k,l}(t)| + 2K\sum_{k=1}^K\sup_{0 \leq s \leq t}|M_k^n(s)|,\\
    \hat{C}^n &\doteq \sum_{k,l \in [K]: k \neq l}\hat{Z}^n_{k,l}(0)+2K\sum_{k=1}^K|\hat{Q}^n_k(0)|.
\end{align*}
By a similar Borel-Cantelli argument as was given above, 
\begin{equation}
    \mathbb{P}(\limsup_{t\rightarrow\infty}\mathscr{M}^n(t)/t = 0) = 1.
\end{equation}
Hence, dividing the inequality \eqref{xijineqbloc}  on both sides by $\sqrt{n}t$ and taking the limsup in $t$, we conclude that, almost surely, for any $i,j \in [K]$ with $i \neq j$,
\begin{align}
\begin{split}\label{limsupint}
    \limsup_{t\rightarrow\infty}\frac{1}{t}\int_0^t \mathbf{1}_{\{X^n_i(s) = X^n_j(s)\}} ds \leq \frac{4K^2|a|_*}{\sqrt{n}}.
\end{split}
\end{align}
Combining this with inequality \eqref{errorbound}, 
\begin{align*}
    \limsup_{t \rightarrow \infty}\frac{1}{t} \sup_{0\le s \le t}|\tilde{\mathcal{E}}^n(s)| \leq \frac{4K^4|a|^2_*}{\sqrt{n}}.
\end{align*}
Thus, setting $D_4 \doteq 4K^4|a|^2_*$, the above and \eqref{eq:limsupmgle} combined yield the desired result.
\end{proof}

\begin{lemma}\label{lem:staypos}
	There is a $n_0 \in \NN$ such that for all $n\ge n_0$,
	$$\PP(\liminf_{t\to \infty} \hat Z^n_1(t)/t  \ge (b-Ka)/2K) = 1.$$
\end{lemma}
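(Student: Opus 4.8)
The plan is to exploit the Skorokhod-map representation \eqref{eq:838} of $\hat\bfZ^n(\cdot)$ (a purely algebraic identity, valid in this parameter regime) together with the explicit reflected linear trajectory computed in Lemma \ref{lem:skorprop}. First, note that for $\ta_1=a-b$ and $\ta_i=a$, $2\le i\le K$, the drift vector of Theorem \ref{convtodiff} is exactly the vector of Lemma \ref{lem:skorprop}, namely $\boldsymbol{\rho}=(-a+b,-b,0,\dots,0)'=\bfv$. Working under the law with $\hat\bfZ^n(0)=\bfz$ for a fixed starting point $\bfz$, set $\hat\Vbf^n(t)\doteq\Gamma_2(\bfz+\boldsymbol{\rho}\,\id)(t)$, with $\Gamma=(\Gamma_1,\Gamma_2)$ the Skorokhod map for $\mathscr R$. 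By \eqref{eq:838} and the Lipschitz property of $\Gamma$ (Proposition \ref{prop:skor}), exactly as in \eqref{eq:516n},
\[
\sup_{0\le s\le t}\bigl|\hat\bfZ^n(s)-\hat\Vbf^n(s)\bigr|\ \le\ c_\Gamma\sup_{0\le s\le t}\bigl(|A\tilde\bfM^n(s)|+|\tilde{\mathcal E}^n(s)|\bigr).
\]

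Next I would compare $\hat\Vbf^n$ with the pure linear path. Since $\bfz+\boldsymbol{\rho}\,\id$ and $\boldsymbol{\rho}\,\id$ differ by the constant $\bfz$, the Lipschitz bound of Proposition \ref{prop:skor}, applied on each interval $[0,t]$, gives $\sup_{s\ge0}|\hat\Vbf^n(s)-\Gamma_2(\boldsymbol{\rho}\,\id)(s)|\le c_\Gamma|\bfz|$, and by Lemma \ref{lem:skorprop} (using $\boldsymbol{\rho}=\bfv$) we have $\Gamma_2(\boldsymbol{\rho}\,\id)(s)=s\bfv_1$ with $(\bfv_1)_1=b/K-a=(b-Ka)/K>0$. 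Reading off the first coordinate and combining the two bounds, for all $t>0$,
\[
\hat Z^n_1(t)\ \ge\ \frac{b-Ka}{K}\,t\ -\ c_\Gamma|\bfz|\ -\ c_\Gamma\sup_{0\le s\le t}\bigl(|A\tilde\bfM^n(s)|+|\tilde{\mathcal E}^n(s)|\bigr).
\]

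Then I would divide by $t$ and take $\liminf_{t\to\infty}$. Clearly $c_\Gamma|\bfz|/t\to0$. Choosing a constant $c_A\ge1$ with $|Aw|\le c_A|w|$ for all $w$ (finite, depending only on the matrix $A$ of \eqref{Amat}), we get $\sup_{0\le s\le t}(|A\tilde\bfM^n(s)|+|\tilde{\mathcal E}^n(s)|)\le c_A\sup_{0\le s\le t}(|\tilde\bfM^n(s)|+|\tilde{\mathcal E}^n(s)|)$, so Lemma \ref{lem:martlln} yields
\[
\limsup_{t\to\infty}\frac1t\sup_{0\le s\le t}\bigl(|A\tilde\bfM^n(s)|+|\tilde{\mathcal E}^n(s)|\bigr)\ \le\ \frac{c_AD_4}{\sqrt n}\qquad\text{a.s.}
\]
Hence, almost surely, $\liminf_{t\to\infty}\hat Z^n_1(t)/t\ge (b-Ka)/K-c_\Gamma c_A D_4/\sqrt n$. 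It then suffices to pick $n_0\in\NN$ with $c_\Gamma c_A D_4/\sqrt{n_0}\le (b-Ka)/(2K)$, which is possible since $b>aK$; the claim then holds for every $n\ge n_0$.

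All the real work is carried by Lemmas \ref{lem:skorprop} and \ref{lem:martlln}, so the remaining argument is essentially bookkeeping; the two points to be careful about are that $\boldsymbol{\rho}$ coincides exactly with the vector $\bfv$ for which Lemma \ref{lem:skorprop} identifies the reflected linear trajectory, and that the comparison path $\hat\Vbf^n$ must be controlled \emph{uniformly in $t$} — which is why I subtract off $\Gamma_2(\boldsymbol{\rho}\,\id)$ via the constant input gap $\bfz$ rather than estimating $\Gamma_2(\bfz+\boldsymbol{\rho}\,\id)$ directly. I do not expect a genuine obstacle here.
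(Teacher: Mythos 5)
Your argument is correct and follows the paper's route essentially step for step: both invoke the Skorokhod representation \eqref{eq:838}, the Lipschitz property of $\Gamma$, the explicit reflected trajectory from Lemma \ref{lem:skorprop}, and the almost-sure $O(t/\sqrt n)$ growth bound of Lemma \ref{lem:martlln} to choose $n_0$. Your intermediate process $\hat\Vbf^n$ merely makes explicit a triangle inequality that the paper's single application of the Lipschitz estimate (absorbing $|\hat\bfZ^n(0)|$ directly into the input difference) handles in one step, and your constant $c_A$ plays the role of $|A|$ there.
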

\begin{proof}
	Recall $v(\cdot)$ from Lemma \ref{lem:skorprop}. From \eqref{eq:838} and the Lipschitz property of the Skorokhod map $\Gamma$ with reflection matrix $\mathscr{R}$,
	\begin{align*}
		\frac{1}{t} |\hat \bfZ^n(t) - \Gamma_2(v)(t)| \le c_{\Gamma}\frac{|\hat \bfZ^n(0)| + |A| \sup_{0\le s \le t}|\tilde{\mathbf{M}}^n(s)| + \sup_{0\le s \le t}|\tilde{\mathcal{E}}^n(s)|}{t}.
	\end{align*}
Set $n_0 = (\frac{2Kc_\Gamma (|A|+1)D_4}{b-Ka})^2$. Taking $n \geq n_0$, the above yields almost surely
    \begin{align*}
		\liminf_{t \rightarrow \infty}\frac{1}{t} |\hat Z_1^n(t) - \frac{(b-Ka)t}{K}| \le \liminf_{t \rightarrow \infty}\frac{1}{t} |\hat \bfZ^n(t) - \Gamma_2(v)(t)| 
         \leq c_{\Gamma}\frac{D_4(|A|+1)}{\sqrt{n}}
         \leq \frac{b-Ka}{2K}
	\end{align*}
where the first inequality follows from Lemma \ref{lem:skorprop} and the second inequality follows from Lemma \ref{lem:martlln}. The result is immediate from this display.
\end{proof}
We now introduce a $(K-1)$-dimensional process $\hat \bfU^n$ which is analogous to $\bfZ^n_c$  but unlike the latter process it is a Markov process.
Let $\{\tilde A_i, \tilde D_i, 1 \le i \le K\}$ be a collection of $2K$ mutually independent rate $1$ Poisson processes.
Given $\bfx = (x_2, \ldots x_K)' \in G^n_{K-1} \doteq \frac{1}{\sqrt{n}}\NN_0^{K-1}$, let $\hat \bfU^n(t) \doteq (\hat U^n_{2}(t), \ldots \hat U^n_{K}(t))$ be a stochastic process with values in $G^n_{K-1}$, defined as follows. For $i = 2, \ldots , K$, 
\begin{align}
	\hat U^n_i(t) &\doteq x_i + \tilde A_i(n\int_0^t \tilde\alpha^n_i(s)\mathbf{1}_{\{\hat U^n_{i+1}(s)>0\}}ds) - \tilde D_i(n\int_0^t \tilde\delta^n_i(s)\mathbf{1}_{\{\hat U^n_{i}(s)>0\}}ds)\nonumber\\
	&\quad -\tilde A_{i-1}(n\int_0^t \tilde\alpha^n_{i-1}(s)\mathbf{1}_{\{\hat U^n_{i}(s)>0\}}ds) + \tilde D_{i-1}(n\int_0^t \tilde\delta^n_{i-1}(s)\mathbf{1}_{\{\hat U^n_{i-1}(s)>0\}}ds), \label{eq:722}
\end{align}
where, by convention $\hat U^n_{1}(t) \doteq 1$, and,
with $\hat V^n_j(t) \doteq \sum_{i=2}^j \hat U^n_i(t)$, $j = 2, \ldots , K$, and  $\hat V^n_1(t) \doteq 0$,
$$
\tilde \delta^n_i(t) \doteq \sum_{j=1}^K \mathbf{1}_{\{\hat V^n_j(t) =\hat V^n_i(t)\}}, \; \tilde \alpha_i^n(t) \doteq \sum_{j=1}^K (1 - \frac{a_j}{\sqrt{n}})\mathbf{1}_{\{\hat V^n_j(t) =\hat V^n_i(t)\}}, \; i \in [K], \; t\ge 0.$$
Intuitively, $\hat{\bfU}^n(\cdot)$ may be viewed as modeling the $K-1$ gaps between the ranked $K$ queues in the analogous system without reflection at zero, that is, if the queues were allowed to take negative values.

Proof of the next lemma follows along the lines of Theorem \ref{convtodiff} and Theorem  \ref{statdistconv}. 

\begin{lemma}\label{lem:unerg}
	There exists $n_0 \in \mathbb{N}$ such that for all $n \ge n_0$, the $G^n_{K-1}$ valued Markov process $\{\hat \bfU^n(\cdot)\}$ has a unique stationary distribution $\hat\pi^{n, \bfU}$. Furthermore, denoting for $\bfx \in G^n_{K-1}$, by $\PP_{\bfx}$ the probability measure under which  $\hat \bfU^n(0)=\bfx$, we have that, for each $\bfx, \bfy \in G^n_{K-1}$, $n \ge n_0$,
	\begin{equation}\label{erg}
	\PP_{\bfx}(\bfU^n(t) =\bfy) \to \hat\pi^{n, \bfU}(\bfy), \mbox{ as } t \to \infty.
	\end{equation}
	Finally, defining $\hat \pi^{n, \bfU} \in \clp(\RR_+^{K-1})$ as
	$$\hat \pi^{n, \bfU}(A) \doteq \sum_{\bfx \in A} \hat\pi^{n, \bfU}(\bfx),$$
	we have that $\hat \pi^{n, \bfU} \Rightarrow \hat \pi^{\bfU}$ where 
	$$\hat \pi^{\bfU} \sim \bigotimes_{i=1}^{K-1}\textnormal{Exp}((\frac{K-i}{K})b).$$
\end{lemma}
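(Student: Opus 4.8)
The plan is to establish Lemma \ref{lem:unerg} in three stages: first verifying that $\hat\bfU^n$ is a well-behaved (irreducible, nonexplosive) continuous-time Markov chain on the countable state space $G^n_{K-1}$; second, proving a Foster--Lyapunov drift condition that gives positive recurrence (and hence existence, uniqueness, and the ergodic convergence \eqref{erg}) for $n$ large; and third, identifying the large-$n$ limit $\hat\pi^{n,\bfU}\Rightarrow\hat\pi^\bfU$ by passing through the diffusion limit and its stationary distribution.

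First I would set up the Markov structure. The process $\hat\bfU^n$ is driven by the $2K$ independent rate-$1$ Poisson processes $\{\tilde A_i,\tilde D_i\}$ through finitely many bounded rates (each coordinate jumps by $\pm 1/\sqrt n$ at total rate bounded by a constant multiple of $nK$), so strong existence and uniqueness follow by the same jump-by-jump construction used for \eqref{ordqs}, and nonexplosion is immediate from the bounded jump rates. Irreducibility on $G^n_{K-1}$ is a routine check: from any state one can reach any other by a suitable finite sequence of arrival/departure events at the relevant ranked positions (the indicator factors $\mathbf{1}_{\{\hat U^n_i>0\}}$, $\mathbf{1}_{\{\hat U^n_{i+1}>0\}}$ never simultaneously block all the moves needed), exactly as in the irreducibility argument underlying Lemma \ref{statdistlem}.

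Second, for positive recurrence I would mirror the Lyapunov-function argument of Section \ref{secconvstat}. The point is that $\hat\bfU^n$ is (in distribution) the gap process associated with the \emph{unreflected} ranked queueing system; equivalently it is the image under the normal-reflection Skorokhod map (for the last $K-1$ coordinates of $\bfY$, with $\bfY_1$ pinned at the strictly positive drift) of a martingale-plus-drift input. Writing the analogue of \eqref{eq:838}, the drift of the free input for the gap coordinates $(Z_2,\dots,Z_K)$ is $\boldsymbol\rho_c=(-(\ta_2-\ta_1),-(\ta_3-\ta_2),\dots)'=(b-a\cdot 0,\dots)$, wait — with $\ta_1=a-b$, $\ta_i=a$ — so $\boldsymbol\rho_c=((b),0,\dots,0)'$ has strictly positive first component and zero elsewhere, and under the reflection at the boundaries of the positive orthant for the upper gaps the relevant reflection matrix is the $(K-1)\times(K-1)$ principal submatrix of $\clr$ obtained by deleting the first row and column; its inverse applied to $\boldsymbol\rho_c$ is strictly negative (this is precisely the computation in Lemma \ref{lem:skorprop}, which shows the reflected drift pushes the upper gaps inward). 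Granting this, the exact same sequence of lemmas as in Section \ref{secconvstat} applies: a moment estimate of the form \eqref{lyaplim} for $\hat\bfU^n$ (using the martingale bounds \eqref{eq:qvmnaij}--\eqref{eq:qvmndij} and the tie-time estimate of Corollary \ref{pairwisecorr}, whose analogue holds for $\hat\bfU^n$ since ties among the $\hat V^n_j$ occur on a vanishing set of times by the same argument), then the hitting-time bound of Lemma \ref{lyaplem2}, then positive recurrence and uniqueness as in Lemma \ref{statdistlem}. Positive recurrence of an irreducible nonexplosive chain gives \eqref{erg}.

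Third, for the identification of the limit I would invoke the process-level convergence of $\hat\bfU^n$ to the unconstrained-in-the-first-coordinate diffusion, which is the $\bfZ$-system of Theorem \ref{convtodiff} with the first local time $L_1$ removed (equivalently, the gap process $\tilde\bfZ$ of the Atlas model in Remark \ref{rem:atl} but without the lowest-particle reflection, since in the regime $b>aK$, $a\ge 0$ the lowest particle has strictly positive effective drift $b/K-a>0$ by Lemma \ref{lem:skorprop} and escapes the origin): for the upper $K-1$ gaps this is exactly a $(K-1)$-particle reflected Brownian system whose stationary law is the product of $\mathrm{Exp}((\tfrac{K-i}{K})b)$, $i=1,\dots,K-1$ — this is the classical Atlas-model stationary gap distribution and can be read off from (the appropriate specialization of) Theorem \ref{statdistthm}, since deleting the $L_1$ reflection replaces the drift vector and reflection matrix by their $(K-1)$-dimensional restrictions and the product-form computation \eqref{expstatdist}--\eqref{etai} goes through verbatim with $\eta_i=-\sum_{j\ge i+1}\ta_j+\ta_1=\dots=-(\tfrac{K-i}{K})b$ (a short arithmetic check). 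To upgrade convergence of stationary laws from the process-level limit, I would repeat the Lyapunov-bound argument of the proof of Theorem \ref{statdistconv} verbatim: the uniform estimate \eqref{eq:lyabnd} gives tightness of $\{\hat\pi^{n,\bfU}\}$ via a uniform first-moment bound $\int(1+|\bfx|)\,d\hat\pi^{n,\bfU}\le\bar\kappa$, and any subsequential limit is stationary for the limiting diffusion by Theorem \ref{convtodiff}'s analogue, hence equals $\hat\pi^\bfU$ by uniqueness.

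The main obstacle I expect is the bookkeeping in the second stage — namely verifying that the restricted reflection matrix (the principal submatrix of $\clr$ after deleting the lowest-particle row and column) is still completely-$\mathcal S$ / in the class $\Mmb$, and that the restricted drift $\boldsymbol\rho_c$ satisfies the strict stability condition, so that the estimates \eqref{eq:516n}--\eqref{eq:519n}, Lemma \ref{lyaplem1}, and Lemma \ref{lyaplem2} transfer without change; once that structural fact is in hand, every analytic step is a direct transcription of Sections \ref{secheavtraf}--\ref{secconvstat}. A secondary but genuinely routine point is checking that the tie-time bound (Corollary \ref{pairwisecorr}) has a valid analogue for the $\hat V^n_j$-process with $\hat V^n_1\equiv 0$ and $\hat U^n_1\equiv 1$; the argument of Lemma \ref{pairwiselemma} applies since the boundary behavior of the $\hat U^n_i$'s is of the same reflected type.
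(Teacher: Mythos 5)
Your three-stage plan tracks the paper's own proof sketch closely: rewrite $\hat\bfU^n$ via the Skorokhod map $\bar\Gamma$ for the $(K-1)\times(K-1)$ reflection matrix $\bar R$ (the principal submatrix of $\clr$ with the first row and column deleted), obtain the process limit $\hat\bfU^n\Rightarrow\bfU$ as in Theorem \ref{convtodiff}, verify $\bar R^{-1}\bar\rho<0$ and invoke \cite{harwil,harwil2} for the product-form stationary law of $\bfU$, then redo the Lyapunov argument of Theorem \ref{statdistconv} for the prelimit chains and get \eqref{erg} from positive recurrence plus irreducibility. The main structural observation you flag as the ``obstacle'' --- that $\bar R\in\Mmb$ and the restricted drift satisfies the stability condition --- is indeed what the paper checks, and it is routine: $\bar R=I-Q'$ with $Q$ nonnegative of spectral radius $<1$, and $\bar R^{-1}\bar\rho=-b\,(\bar R^{-1})_{\cdot 1}<0$ since the first column of $\bar R^{-1}$ is strictly positive.

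Two arithmetic points need correction though. First, the restricted drift is $\bar\rho=(-(\ta_2-\ta_1),0,\dots,0)'=(-b,0,\dots,0)'$, not $(b,0,\dots,0)'$; your sign is flipped (and your appeal to Lemma \ref{lem:skorprop} is a mis-attribution --- that lemma computes the deterministic reflected drift in the \emph{full} $K$-dimensional Skorokhod problem, not the $(K-1)$-dimensional stability inequality). Second, and more substantively, the exponent formula you write, $\eta_i=-\sum_{j\ge i+1}\ta_j+\ta_1$, is wrong: it gives $\eta_1=-(K-2)a-b$, whereas the correct value is $-\tfrac{(K-1)b}{K}$ (take $K=2$, $a=0$ to see the discrepancy immediately). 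The product-form computation does \emph{not} go through verbatim because the geometry changes: $\bar\Lambda=\mathrm{diag}(\bar A\bar A')=4I_{K-1}$ is constant (unlike the $K$-dimensional $\Lambda$, whose first diagonal entry is $2$), and $\bar R^{-1}$ must be recomputed from scratch. Solving $\bar Rc=\ebd_1$ gives $c_i=\tfrac{2(K-i)}{K}$, hence $\eta^{\bfU}=2\bar\Lambda^{-1}\bar R^{-1}\bar\rho=-\tfrac{b}{2}c$, i.e.\ $\eta^{\bfU}_i=-\tfrac{(K-i)b}{K}$, which does yield the stated product of exponentials, but not via the formula you wrote down. These are fixable slips, not conceptual errors, and once corrected your argument coincides with the paper's.
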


\begin{proof}[Proof Sketch]

Note that, analogous to \eqref{eq:838}, $\hat\bfU^n(t) = \bar{\Gamma}_2 \left(\hat\bfU^n(0) + \bar{\boldsymbol{\rho}}\id + \bar{A} \bar\bfM^n + \bar{\mathcal{E}}^n\right)(t)$, $t \ge 0$, where $\bar{\Gamma} = (\bar{\Gamma}_1,\bar{\Gamma}_2)$ is the Skorokhod map associated with the $(K-1) \times (K-1)$ reflection matrix $\bar{R}$ obtained by deleting the first row and first column of $R$, $\bar{\boldsymbol{\rho}} = (-b,0,\dots,0)'$, $\bar{A}$ is the $(K-1) \times K$ sub-matrix formed by deleting the first row of $A$, $\bar\bfM^n$ is the $K$-dimensional martingale process obtained from the compensated arrival and departure processes similarly as $\tilde{\bfM}^n$, and $\bar{\mathcal{E}}^n$ is the `error term' which is of a smaller order (in $n$) compared to the other terms. The same approach as in the proof of Theorem \ref{convtodiff} shows that $\hat{\Ubf}^n(\cdot) \Rightarrow\,\, \Ubf(\cdot)$ in $\mathcal{D}([0,\infty): \mathbb{R}_+^K)$, where $\bfU(t) = \bar{\Gamma}_2\left(\bfx + \bar{\boldsymbol{\rho}}\id + \bar{A}\bfB\right)(t), \, t \ge 0,$ where $\bfB$ is a $K$-dimensional standard Brownian motion. Now, observing that $\bar{R}^{-1}\bar{\boldsymbol{\rho}}<0$, and using results from \cite{harwil,harwil2} as before, $\bfU(\cdot)$ has a unique stationary distribution given by $\hat \pi^{\bfU}$ in the lemma.

Proceeding in the same way as in the proof of Theorem \ref{statdistconv}, we obtain the existence of the stationary distribution $\hat \pi^{n, \hat\bfU}$ of $\hat\bfU^n(\cdot)$ for large enough $n$ and the convergence of $\hat \pi^{n, \bfU}$ to $\hat \pi^{\bfU}$ as $n \rightarrow \infty$. The time convergence \eqref{erg} follows from the positive recurrence and irreducibility of $\hat\bfU^n(\cdot)$ \cite[Theorem (21), Section 6.9, p.261]{grimmettprobability}.
\end{proof}

For $n \in \NN$, $\bfx, \bfy \in G^n_{K-1}$ and $t \ge 0$, let 
$$\clt_t^n(\bfx, \bfy) \doteq \PP_{\bfx}(\hat \bfU^n(t)= \bfy).$$
Recall the processes $\{A_i, D_i, X^n_i, i \in [K]\}$ from \eqref{ordqs}.
Define for $m \in \NN$, $i \in [K]$ and $t\ge 0$,
\begin{align*}
	\tilde A^m_i(t) \doteq A_i\left(t+ n\int_0^m \alpha^n_i(s)\mathbf{1}_{\{X^n_i(s) < X^n_{i+1}(s)\}}ds\right) - A_i\left( n\int_0^m \alpha^n_i(s)\mathbf{1}_{\{X^n_i(s) < X^n_{i+1}(s)\}}ds\right),\\
	\tilde D^m_i(t)  \doteq D_i\left(t+ n\int_0^m \delta^n_i(s)\mathbf{1}_{\{X^n_{i}(s) > X^n_{i-1}(s)\}}ds\right) - D_i\left( n\int_0^m \delta^n_i(s)\mathbf{1}_{\{X^n_{i}(s) > X^n_{i-1}(s)\}}ds\right).
\end{align*}
Now define for $n,m \in \NN$, $\hat \bfU^{n,m}$ by replacing $(\tilde A_i, \tilde D_i)$ in \eqref{eq:722} with $(\tilde A_i^m, \tilde D_i^m)$ and $\bfx$ with
$\hat \bfZ^n_c(m)$, namely
for $i = 2, \ldots , K$, $t\ge 0$,
\begin{align}
	\hat U^{n,m}_i(t) &\doteq \hat \bfZ^n_i(m) + \tilde A^m_i\left(n\int_0^t \tilde\alpha^{n,m}_i(s)\mathbf{1}_{\{\hat U^{n,m}_{i+1}(s)>0\}}ds\right) - \tilde D^m_i\left(n\int_0^t \tilde\delta^{n,m}_i(s)\mathbf{1}_{\{\hat U^{n,m}_{i}(s)>0\}}ds\right)\nonumber\\
	&\quad -\tilde A^m_{i-1}\left(n\int_0^t \tilde\alpha^{n,m}_{i-1}(s)\mathbf{1}_{\{\hat U^{n,m}_{i}(s)>0\}}ds\right) + \tilde D^m_{i-1}\left(n\int_0^t \tilde\delta^{n,m}_{i-1}(s)\mathbf{1}_{\{\hat U^{n,m}_{i-1}(s)>0\}}ds\right), \label{eq:7222}
\end{align}
where, by convention $\hat U^{n,m}_{1}(s) \equiv 1$, and,
with $\hat V^{n,m}_j(t) \doteq \sum_{i=2}^j \hat U^{n,m}_i(t)$, $j = 2, \ldots , K$, and  $\hat V^{n,m}_1(t) \doteq 0$,
$$
\tilde \delta^{n,m}_i(t) \doteq \sum_{j=1}^K \mathbf{1}_{\{\hat V^{n,m}_j(t) =\hat V^{n,m}_i(t)\}}, \; \tilde \alpha_i^{n,m}(t) \doteq \sum_{j=1}^K (1-\frac{a_j}{\sqrt{n}})\mathbf{1}_{\{\hat V^{n,m}_j(t) =\hat V^{n,m}_i(t)\}}, \; i \in [K], \; t\ge 0.$$
In words, $\hat{\bfU}^{n,m}(\cdot)$ denotes the $(K-1)$-dimensional gap process of the ranked $K$ queues when the queues are non-negative (reflected at zero) till time $m$ and then are allowed to take negative values (no reflection at zero).
Let, for $n,m \in \NN$,
$$\clf^n_m \doteq \sigma\{ X^n_i(t), A_i\left(n\int_0^t \alpha^n_i(s)\mathbf{1}_{\{X^n_i(s) < X^n_{i+1}(s)\}}ds\right), D_i\left(n\int_0^t \delta^n_i(s)\mathbf{1}_{\{X^n_{i}(s) > X^n_{i-1}(s)\}}ds\right), t \le m, i \in [K]\}.$$
The following lemma holds.
\begin{lemma}
	For $n \in \NN$, $\bfy \in G^n_{K-1}$, and $m \in \NN$
	$$\PP(\hat \bfU^{n,m}(t) = \bfy \mid \clf^n_m) = \clt_t^n(\hat \bfZ^n_c(m), \bfy).$$
\end{lemma}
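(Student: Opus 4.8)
The plan is to recognise $\hat\bfU^{n,m}(\cdot)$ as the image, under a single fixed measurable solution map, of a $\clf^n_m$-measurable initial condition together with a family of Poisson processes that, conditionally on $\clf^n_m$, carry exactly the law of the driving processes of $\hat\bfU^n(\cdot)$ and are independent of $\clf^n_m$; the identity will then follow from the standard substitution rule for conditional probabilities. First I would set, for $i \in [K]$, $\Phi^n_{A,i}(m) \doteq \int_0^m \alpha^n_i(s)\mathbf{1}_{\{X^n_i(s) < X^n_{i+1}(s)\}}\,ds$ and $\Phi^n_{D,i}(m) \doteq \int_0^m \delta^n_i(s)\mathbf{1}_{\{X^n_{i}(s) > X^n_{i-1}(s)\}}\,ds$, so that $\tilde A^m_i(\cdot) = A_i(n\Phi^n_{A,i}(m)+\cdot) - A_i(n\Phi^n_{A,i}(m))$ and likewise for $\tilde D^m_i$, and I would note that the operational times $n\Phi^n_{A,i}(m)$, $n\Phi^n_{D,i}(m)$ are $\clf^n_m$-measurable.

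The crux, and the step I expect to be the main obstacle, is the \emph{restart property}: conditionally on $\clf^n_m$, the $2K$ processes $\{\tilde A^m_i,\tilde D^m_i:i\in[K]\}$ are mutually independent rate-$1$ Poisson processes, independent of $\clf^n_m$. The point is that $\clf^n_m$ is generated by $\{X^n_i(0):i\in[K]\}$ together with the $A_i$'s and $D_i$'s inspected only up to their operational times $n\Phi^n_{A,i}(m)$, $n\Phi^n_{D,i}(m)$ reached at physical time $m$, and nothing beyond that. I would make this precise via the jump-by-jump pathwise construction of \eqref{ordqs}: revealing the jumps of $A_1,\dots,D_K$ in physical-time order on $[0,m]$ shows that each $A_i$ (resp. $D_i$) has been looked at only on $[0,n\Phi^n_{A,i}(m)]$ (resp. $[0,n\Phi^n_{D,i}(m)]$), so by the independent-increments and memorylessness properties of the Poisson process the post-$m$ increments $\tilde A^m_i,\tilde D^m_i$ are fresh, mutually independent, rate-$1$ Poisson processes, independent of $\clf^n_m$. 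The care needed here is that the $2K$ operational clocks are coupled through physical time, so this is really a multiparameter optional-sampling statement for a collection of independent Poisson processes rather than a one-line appeal to the ordinary one-dimensional strong Markov property; this is why I anticipate it as the principal difficulty.

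Next I would observe that \eqref{eq:7222} is, termwise, exactly \eqref{eq:722} after the substitutions $(\tilde A_i,\tilde D_i)\mapsto(\tilde A^m_i,\tilde D^m_i)$ and $\bfx\mapsto\hat\bfZ^n_c(m)$ — the auxiliary quantities $\tilde\alpha^{n,m}_i,\tilde\delta^{n,m}_i$ being built from $\hat\bfU^{n,m}$ by the same formulas that define $\tilde\alpha^{n}_i,\tilde\delta^{n}_i$ from $\hat\bfU^{n}$. Since \eqref{eq:722} has a pathwise unique strong solution obtained by the jump-by-jump construction, there is a measurable map $F\colon G^n_{K-1}\times\mathcal{D}([0,\infty):\RR^{2K})\to\mathcal{D}([0,\infty):\RR^{K-1}_+)$ such that $\hat\bfU^n(\cdot)=F(\bfx;\tilde A_1,\dots,\tilde A_K,\tilde D_1,\dots,\tilde D_K)$ under $\PP_\bfx$, hence $\clt^n_t(\bfx,\bfy)=\PP\big(F(\bfx;\tilde A_1,\dots,\tilde D_K)(t)=\bfy\big)$; and the same $F$ applied to $\hat\bfZ^n_c(m)$ and the restarted processes gives $\hat\bfU^{n,m}(\cdot)=F(\hat\bfZ^n_c(m);\tilde A^m_1,\dots,\tilde D^m_K)$ almost surely. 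Finally, since $\hat\bfZ^n_c(m)$ is $\clf^n_m$-measurable while, by the restart property, $(\tilde A^m_1,\dots,\tilde D^m_K)$ is independent of $\clf^n_m$ and has the same law as $(\tilde A_1,\dots,\tilde D_K)$, the substitution lemma for conditional expectations (if $\xi$ is $\mathcal G$-measurable, $\zeta$ is independent of $\mathcal G$ and $g$ is measurable, then $\PP(g(\xi,\zeta)\in B\mid\mathcal G)=\psi(\xi)$ with $\psi(x)=\PP(g(x,\zeta)\in B)$) applies with $\psi(x)=\PP\big(F(x;\tilde A_1,\dots,\tilde D_K)(t)=\bfy\big)=\clt^n_t(x,\bfy)$, giving
\[
\PP\big(\hat\bfU^{n,m}(t)=\bfy\mid\clf^n_m\big)=\psi\big(\hat\bfZ^n_c(m)\big)=\clt^n_t\big(\hat\bfZ^n_c(m),\bfy\big)\quad\text{a.s.,}
\]
which is the asserted identity.
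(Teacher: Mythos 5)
Your proof is correct and takes essentially the same route that the paper compresses into its one-line appeal to the Markov property of $\hat\bfU^n$: the restart property of the time-changed Poisson processes (a multiparameter optional-sampling statement, as you rightly flag), pathwise uniqueness giving a single measurable solution map $F$, and the substitution rule for conditional probabilities with $\hat\bfZ^n_c(m)$ $\clf^n_m$-measurable and the restarted drivers independent of $\clf^n_m$. The paper leaves all of this implicit; you unpack it accurately and identify the restart step as the genuine technical content.
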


\begin{proof}
This is immediate on using the Markov property of $\hat\bfU^n$.
\end{proof}

We now complete the proof of Theorem \ref{unstabstatdist}.\\

\noindent
 \textbf{Proof of Theorem \ref{unstabstatdist}.}
Let for $\om \in \Om$,
$$t_0(\om) \doteq \inf\{t\ge 0: \hat Z^n_1(s, \om) \ge \frac{b-Ka}{4K}s \mbox{ for all } s \ge t\}.$$
From Lemma \ref{lem:staypos} we see that $t_0(\om)<\infty$ a.s.
Now note that, for $m \in \NN$ and $\bfz \in G^n_{K-1}$,
\begin{align*}
	|\PP(\hat \bfZ^n_c(t+m) = \bfz) -\PP(\hat \bfU^{n,m}(t) = \bfz)| \le  2 \PP(t_0 > m).
\end{align*}
Also, from Lemma \ref{lem:unerg},
\begin{align*}
	\lim_{t\to \infty} \PP(\hat \bfU^{n,m}(t) = \bfz) = \lim_{t\to \infty} \EE(\clt_t^n(\hat \bfZ^n(m), \bfz)) = \hat\pi^{n, \bfU}(\bfz).
\end{align*}
Thus
\begin{align*}
	\limsup_{t\to \infty}|\PP(\hat \bfZ^n_c(t) = \bfz) - \hat\pi^{n, \bfU}(\bfz)|  &= \limsup_{t\to \infty}|\PP(\hat \bfZ^n_c(t+m) = \bfz) - \hat\pi^{n, \bfU}(\bfz)|\\
	&\le  \limsup_{t\to \infty} |\PP(\hat \bfU^{n,m}(t) = \bfz) - \hat\pi^{n, \bfU}(\bfz)| + 2 \PP(t_0 > m)\\
	& = 2 \PP(t_0 > m).
\end{align*}
Since $t_0<\infty$, a.s., we have the first statement in Theorem \ref{unstabstatdist} on sending $m \to \infty$ in the above display.
The second statement is immediate from Lemma \ref{lem:unerg}. \hfill \qed
$\,$\\

 \noindent {\bf Acknowledgements}
Research supported in part by  the RTG award (DMS-2134107) from the NSF.  SB was supported in part by the NSF-CAREER award (DMS-2141621).
AB was supported in part by the NSF (DMS-2152577). 

{\cb We acknowledge the valuable feedback from two referees and an associate editor that significantly improved this article.}

\bibliographystyle{plain}
\bibliography{queue_ref}

\vspace{\baselineskip}

\noindent{\scriptsize {\textsc{\noindent S. Banerjee, A. Budhiraja, and B. Estevez\newline
Department of Statistics and Operations Research\newline
University of North Carolina\newline
Chapel Hill, NC 27599, USA\newline
email: sayan@email.unc.edu
\newline
email: budhiraj@email.unc.edu
\newline
email: bestevez@live.unc.edu
 \vspace{\baselineskip} } }}

\end{document}